\let\margin\marginpar
\newcommand\myMargin[1]{\margin{\raggedright\scriptsize #1}}
\renewcommand{\marginpar}[1]{\myMargin{#1}}
\newtheorem{lemma}{Lemma}[section]
\newtheorem{theorem}[lemma]{Theorem}
\newtheorem{corollary}[lemma]{Corollary}
\newtheorem{prop}[lemma]{Proposition}
\newtheorem*{theorem*}{Main Theorem}
\theoremstyle{definition}
\newtheorem{definition}[lemma]{Definition}
\newtheorem{remark}[lemma]{Remark}
\theoremstyle{remark}
\newtheorem*{proof*}{Proof}
\numberwithin{equation}{section}
\newcommand{\Cone}{\operatorname{Cone}}
\newcommand{\SL}{\operatorname{SL}}
\newcommand{\sub}{\subset}
\newcommand{\lan}{\langle}
\newcommand{\ran}{\rangle}
\newcommand{\coker}{\operatorname{coker}}
\newcommand{\Aut}{\operatorname{Aut}}
\def\gr{{\mathbf{gr}}}
\def\D{\mathrm{D}}
\def\Pic{\mathrm{Pic}}
\def\coh{\mathrm{coh}}
\def\Ext{{\mathrm{Ext}}}
\def\Hom{{\mathrm{Hom}}}
\def\sHom{{\mathscr{H}om}}
\def\sEnd{{\mathscr{E}nd}}
\newcommand{\Om}{\Omega}
\newcommand{\om}{\omega}
\newcommand{\ov}{\overline}
\def\Perf{{\underline{Perf}}}
\def\Ker{{\text{Ker}}}
\def\Cok{{\text{Cok}}}
\def\Id{{\mathrm{I}}}
\def\p{{\prime}}
\def\pp{{\prime\prime}}
\def\pr{{\mathrm{pr}}}
\def\dVt{\RR\underline{Vect}}
\def\dCx{\RR{\underline{Cplx}}}
\def\PP{{\mathbb P}}
\def\ZZ{{\mathbb Z}}
\def\CC{{\mathbb C}}
\def\LL{{\mathbb L}}
\def\TT{{\mathbb T}}
\def\RR{{\mathbb R}}
\def\HH{{\mathbb H}}
\def\GG{{\mathbb G}}
\def\AA{{\mathbb A}}
\def\cF{{\cal{F}}}
\def\cE{{\cal{E}}}
\def\cO{{\cal{O}}}
\def\cC{{\cal{C}}}
\def\cM{{\cal{M}}}
\def\cA{A}
\def\cB{{\cal{B}}}
\def\cH{{\cal{H}}}
\def\cP{{\cal{P}}}
\def\cS{{\cal{S}}}
\def\cT{{\cal{T}}}
\def\cU{{\cal{U}}}
\def\cV{{\cal{V}}}
\def\cG{{\cal{G}}}
\def\cX{{\cal{X}}}
\def\cY{{\cal{Y}}}
\def\fs{{\mathfrak{s}}}
\def\ft{{\mathfrak{t}}}
\def\fg{{\mathfrak{g}}}
\def\fm{{\mathfrak{m}}}
\def\tr{{\mathrm{tr}}}
\def\rk{{\mathrm{rk}}}
\def\adj{{\mathrm{adj}}}
\def\Res{{\mathrm{Res}}}
\def\deg{{\mathrm{deg}}}
\def\ch{{\mathrm{ch}}}
\def\ker{{\mathrm{ker}}}
\def\mod{{~\mathrm{mod}~}}
\def\ep{{\epsilon}}
\def\gl{{\mathfrak{gl}}}
\def \ot {\otimes}
\def\ol{\overline}
\def\wt{\widetilde}
\def\gr{\mathrm{gr}}
\def\sn{\mathrm{sn}}
\def\dn{\mathrm{dn}}
\def\cn{\mathrm{cn}}
\def\reg{\mathrm{reg}}
\newcommand{\Trp}{{\mathcal Trp}}
\newcommand{\TS}{{\mathcal TS}}
\newcommand{\divis}{\operatorname{div}}
\newcommand{\dis}{\operatorname{dis}}
\def \la {\langle}
\DeclareRobustCommand\frownotimes{\mathbin{\mathpalette\frown@otimes\relax}}
\newcommand{\frown@otimes}[2]{%
  \vbox{
    \ialign{##\cr
      \hidewidth$\m@th#1{}_\frown$\kern-\scriptspace\hidewidth\cr
      \noalign{\nointerlineskip\kern-1pt}
      $\m@th#1\otimes$\cr
    }%
  }%
}
\title{Shifted Poisson geometry and meromorphic matrix algebras over an elliptic curve}
\date{}
\author[1]{Zheng Hua\thanks{huazheng@maths.hku.hk}}
\author[2]{Alexander Polishchuk \thanks{apolish@uoregon.edu}}
\affil[1]{Department of Mathematics, the University of Hong Kong, Hong Kong SAR, China}
\affil[2]{University of Oregon and National Research University Higher School of Economics}
\begin{document}
\maketitle

\begin{center}
\begin{abstract}
\vspace{1cm}
In this paper we classify symplectic leaves of the regular part of the projectivization of the space of meromorphic endomorphisms of a stable vector bundle on an elliptic curve, using the study of shifted Poisson structures on the moduli of complexes from our previous work \cite{HP17}. This Poisson ind-scheme is closely related to the ind Poisson-Lie group associated to Belavin's elliptic $r$-matrix, studied by Sklyanin, Cherednik and Reyman and Semenov-Tian-Shansky. 
Our result leads to a classification of symplectic leaves on the regular part of meromorphic matrix algebras over an elliptic curve, which can be viewed as the Lie algebra of the above-mentioned ind Poisson-Lie group.
We also describe the decomposition of the product of leaves under the multiplication morphism and show the invariance of Poisson structures under autoequivalences of the derived category of coherent sheaves on an elliptic curve. 

\end{abstract} 
\end{center}
\newpage
\section{Introduction}

This paper is a continuation of \cite{HP17}. Recall that in \cite{HP17} we constructed a natural $0$-shifted Poisson structure on the
derived moduli stack $\dCx(C)$ of complexes of vector bundles
over an elliptic curve $C$. Furthermore, we showed that homotopy fibers of the natural morphism from $\dCx(C)$ to the product
of the moduli stack of perfect complexes and the moduli stack of graded vector bundles, have a $0$-shifted symplectic structure,
so they can be viewed as {\it derived symplectic leaves} of our $0$-shifted Poisson structure.

In the present paper we proceed a little further in the special case of complexes of the form $E\to E(D)$, where
$E$ is a stable vector bundle on $C$, and $D$ is a divisor on $C$ of positive degree
(both $E$ and $D$ are fixed). The coarse moduli space of such complexes, denoted by $M(E,D)$, is the projectivization of the linear space $A(E,D):=\Hom(E,E(D))$. 
In the context of integrable system and quantum algebra, the Poisson structure on $A(E,D)$ was first studied by Sklyanin \cite{Skl82} and Cherednik \cite{Ch1}\cite{Ch2}. In the case when $E$ has rank $2$ and $D$ has degree $1$, Sklyanin constructed its quantization in \cite{Skl82}. The quantization of $A(E,D)$ for general $E$ and $D$ was constructed by Cherednik (see \cite{Ch2}). The corresponding quantum algebras are known as \emph{generalized Sklyanin elliptic algebras}. Note that the orginal construction of Sklyanin (and of Cherednik) is based on Belavin's elliptic solution of the Yang-Baxter equation. The relation of the corresponding Poisson bracket to vector bundles on elliptic curves was clarified by Hurtubise and Markman (see \cite{HM02}). Our geometric construction of a Poisson structure on $A(E,D)$
differs from that of Hurtubise and Markman: we first construct a Poisson structure on its projectivization $M(E,D)$ and then 
characterize its Poisson lift to $A(E,D)$
as a unique one for which the determinant morphism is a Casimir map (see Section \ref{sec:det}).

We also consider the inductive limit $A(E)$ (resp., $M(E)$) of $A(E,D)$ (resp., $M(E,D)$) over all effective divisors $D$. 
This is an infinite dimensional Poisson ind-scheme. 
It can be viewed as the meromorphic part of the Manin triple associated to a formal loop Lie algebra with the Poisson structure defined by Belavin's elliptic $r$-matrix (see Section 12 of \cite{RSTS}). We show that product map on $A(E)$ is Poisson.

The main result of this article is a complete description of symplectic leaves on the regular part $M^{\reg}\subset M(E,D)$, i.e., the open subset consisting of injective endomorphisms modulo isomorphisms of complexes, as well as a classification of symplectic leaves on the regular part of $A(E,D)$.

For a partition $\nu$ of $r\cdot\deg(D)$ we denote by $S_\nu C$ the stratum in the symmetric power of $C$ consisting
of divisors of the form $\sum \nu_i x_i$, and we denote by $S_\nu^{rD} C\sub S_\nu C$ the fiber of the Abel-Jacobi map
$S_\nu C\to \Pic(C)$ over $\cO(rD)$.

\begin{theorem*} Let $E$ be a stable vector bundle of rank $r$ and degree $d$, and $D$ be an effective divisor of degree $k$ on a complex elliptic curve $C$. Given a partition $\nu=(\nu_1\geq\nu_2\geq\ldots\geq \nu_n)$ of $r\cdot k$, and a collection of partitions $\Lambda_\nu=\{\lambda^i\}_{i=1}^n$ such that $|\lambda^i|=\nu_i$, 
we set $l_{\max}(\Lambda_\nu):=\max\{l(\lambda^i)|i=1,\ldots n\}$. 
\begin{enumerate}
\item[(1)]
The set of topological types of the symplectic leaves of $M^{\reg}$ is in one-to-one correspondence with the set of pairs $(\nu,\Lambda_\nu)$ such that $l_{\max}(\Lambda_\nu)\leq r$.
\item[(2)] Given a pair $(\nu,\Lambda_\nu)$ such that $l_{\max}(\Lambda_\nu)\leq r$, the union 
of symplectic leaves of $M^\reg$ of the topological type $(\nu,\Lambda_\nu)$ fibers smoothly over
$S^{rD}_\nu C$.
\item[(3)] Let $A^{\reg}\subset A(E,D)$ be the open subset consisting of injective endomorphisms and 
let $S\sub A^{\reg}$ be a nonempty fiber of the determinant map $\det:A(E,D)\to H^0(C,\cO(rD))$. Let us consider
the quotient map $\pi:S\to M^\reg$, and let $F\sub M^\reg$ be a symplectic leaf. Then
connected components of $\pi^{-1}(F)$ are symplectic leaves in $A^\reg$ (and all leaves are obtained by this construction).
\end{enumerate}
\end{theorem*}
The above theorem is proved in Section \ref{sec:symleaves}. In particular, part $(1)$ corresponds to Theorem \ref{lem_classleaf1}, part $(2)$ corresponds to Theorem \ref{fibrationleaf}, and part $(3)$ corresponds to Corollary \ref{leafofA}.
We also show that the product of two symplectic leaves under the monoid structure on $M(E)$ 
is the union of open subsets in a finite collection of symplectic leaves that can be explicitly described 
(see Theorem \ref{multleaf}).

Note that on finite dimensional complex reductive Lie groups with Poisson-Lie structures defined by quasi-triangular $r$-matrices, the symplectic leaves are orbits of the dressing action (see \cite{Ya02}). In the infinite dimensional case, this method needs additional careful treatement (see \cite{Williams13} for a discussion for the trigonometric case). In this  paper, we take a different approach via derived algebraic geometry, where the action of infinite dimensional (ind)-group can be avoided. 

 In a somehow different direction, we prove the invariance of Poisson structure on the moduli space of complexes under the auto-equivalence of the derived category of coherent sheaves on $C$ (Theorem \ref{FM-thm}). As a corollary, we set up a Poisson isomorphism between $M(E,D)$ and the Poisson moduli space constructed by Feigin and Odesskii \cite{FO95}.

The paper is organized as follows. In Section \ref{sec:loopalg}, we recall the construction of shifted Poisson structure on moduli space of complexes and introduce the space $M(E,D)$ as a special case. Section \ref{sec:symleaves} is the main body of this article. First, we prove that the coarse moduli spaces of the derived symplectic leaves are smooth schemes and show that the classical shadow of the 0-shifted symplectic structure descends to a symplectic structure on the coarse moduli spaces. Then the classification results are proved. In Section \ref{sec:rank-2}, we classify all symplectic leaves for rank 2 case without assuming the regular condition, and give some concrete examples of leaves. In Section \ref{sec:Rel-FO}, we prove the invariance of the Poisson structure under auto-equivalences. 

\paragraph{Acknowledgments.} We are grateful to Jiang-Hua Lu and Yongchang Zhu for many valuable discussions. The second
author thanks Institut Mathematique Jussieu and Institut des Hautes Etudes Scientifiques for hospitality and excellent
working conditions during preparation of this paper.
The research of Z.H. is supported by RGC general research fund no. 17330316  and NSFC Science Fund for Young Scholars no. 11401501. The research of
A.P. is supported in part by the NSF grant DMS-1700642 and by the Russian Academic Excellence Project `5-100'.

\paragraph{Notations:} We work over the field of complex numbers. 
We use letters $\cX,\cY,\ldots$ to denote the stacks (derived or un-derived), and straight letters $X,Y,\ldots$ to denote their underlying coarse moduli schemes (if exist). The tangent and cotangent complexes of $\cX$ are denoted by $\TT_\cX$ and $\LL_\cX$. There is a truncation functor from the category of $D^-$-stacks to the category of (un-derived) stacks
\[
t_0: D_-St(k)\to St(k),
\] induced by the embedding of the category of commutative $k$-algebras in the category of simplicial $k$-algebras (see Section 2 \cite{TV07}). In particular, if $\cX$ is a derived Artin 1-stack then $t_0(\cX)$ is an algebraic stack in the sense of \cite{Art74}.

We freely use the notation from \cite{HP17}. In particular, $\dCx(X)$, $\RR\Perf(X)$ and $\dVt(X)$ (and $\dVt^\gr(X)$) 
denote the derived moduli stacks of complexes, perfect complexes and vector bundles (and graded vector bundles) on $X$, respectively.

\section{Poisson structures on meromorphic endomorphism algebras}\label{sec:loopalg}

Let $E$ be a stable vector bundle over an elliptic curve $C$, and let $D$ be a divisor on $C$ of positive degree. 
There exists a homogeneous quadratic Poisson structure on $A(E,D):=\Hom(E,E(D))$, 
which is referred to as \emph{Mukai bracket} by Hurtubise and Markman (cf.\  \cite[Lemma 3.16]{HM02}). In this section, we will show that the projectivization of $A(E, D)$, with its natural Poisson structure induced by the Mukai bracket, is obtained as the classical shadow (see Section \ref{sec:shiftedPoi}) of the 0-shifted Poisson structure on the moduli spaces of complexes on $C$ constructed in \cite{HP17}. We show that as $D$ varies among all effective divisors, these Poisson brackets on $A(E,D)$
are compatible and hence give a Poisson structure on the algebra $A(E)$ of meromorphic endomorphisms of $E$. 
Furthermore, we show that the product map is Poisson.

\subsection{Shifted Poisson structure and moduli space of complexes}\label{sec:shiftedPoi}

We refer to \cite{PTVV}, \cite{CPTVV} and \cite{Sp16} for the general theory of shifted symplectic and Poisson structures.

Let $\cX$ be a derived Artin stack over $k$.  Then an $n$-shifted Poisson structure $h$ on $\cX$ defines a morphism
in the derived category
\[
\xymatrix{
\Pi_h : \LL_{\cX}[n]\ar[r] &\TT_\cX}.
\]
In the case when $n=0$, taking the $0$-th cohomology, we get a morphism
\[
H^0(\Pi_h): H^0(\LL_X)\to H^0(\TT_X).
\]
We call this map  the \emph{classical shadow} of a $0$-shifted Poisson structure. 

In \cite{HP17}, for a smooth projective CY $d$-fold we constructed a $(1-d)$-shifted Poisson structured
on the derived moduli space $\dCx(X)$ of (bounded) complexes of vector bundles over $X$. It arises by a general procedure
from a Lagrangian structure on the natural map 
$$(q,p):\dCx(X)\to \RR\Perf(X)\times \dVt^\gr(X),$$
where $q$ (resp., $p$) 
sends a complex of vector bundles to the corresponding object in $\D(X)$ 
(resp., the underlying graded vector bundle), and from $(2-d)$-shifted symplectic structures on $\RR\Perf(X)$ and
$\dVt^\gr(X)$ constructed in \cite{PTVV}.
  
In this paper we are only interested in the case $d=1$. In this case we have the following result. 

\begin{theorem} (\cite[Theorem 3.13, Corollary 3.20, Corollary 3.21]{HP17}) \label{HPmainthm}
Let $C$ be a smooth elliptic curve.
The derived stack $\dCx(C)$ has a 0-shifted Poisson structure. Given an object in $\D^b(C)$ (resp. a graded vector bundle on $C$), denote its corresponding stacky point (see Definition 2.4 \cite{HP17}) by $x$ (resp. by $y$). The homotopy fiber of $q$ at $x$ (resp. fiber of $p$ at $y$), inherits the 0-shifted Poisson structure from $\dCx(C)$. Moreover, the homotopy fiber of the map $(q,p)$ at $(x,y)$ has a 0-shifted symplectic structure.
\end{theorem}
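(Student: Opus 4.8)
The plan is to deduce all three assertions from the general formalism of shifted symplectic and Poisson structures \cite{PTVV,CPTVV}, the only geometric input being that $C$ is a $1$-dimensional Calabi--Yau variety, so $\om_C\simeq\cO_C$ and Serre duality on $C$ is unshifted. First I would realize the three stacks as mapping stacks, $\RR\Perf(C)=\Map(C,\Perf)$, $\dVt^\gr(C)=\Map(C,\Vect^\gr)$ and $\dCx(C)=\Map(C,\dCx(\mathrm{pt}))$, where $\dCx(\mathrm{pt})$ is the derived stack of bounded complexes of finite-dimensional vector spaces; under these identifications $(q,p)$ is $\Map(C,-)$ applied to the tautological morphism $\dCx(\mathrm{pt})\to\Perf\times\Vect^\gr$ recording the quasi-isomorphism class together with the underlying graded object of a complex. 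Over the point, $\Perf$ carries the canonical $2$-shifted symplectic form built from the trace pairing on endomorphisms \cite{PTVV}, and $\Vect^\gr$ a $2$-shifted symplectic form built from the trace pairing on $\gl$; the key local statement is that $\dCx(\mathrm{pt})\to\Perf\times\Vect^\gr$, with an appropriate sign on the $\Vect^\gr$ factor, carries a Lagrangian structure. Its isotropy datum is a canonical trivialization over $\dCx(\mathrm{pt})$ of the difference of the two pulled-back $2$-forms (reflecting that the Euler pairing of a complex is computed from its graded pieces), and its non-degeneracy is the identification $\TT_{\dCx(\mathrm{pt})/\Perf\times\Vect^\gr}\simeq\LL_{\dCx(\mathrm{pt})}[1]$.

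Next I would transgress along $C$. By functoriality of the PTVV construction for the $1$-dimensional oriented object $C$, the functor $\Map(C,-)$ turns a $2$-shifted symplectic stack into a $(2-1)=1$-shifted symplectic one, and turns Lagrangian morphisms into Lagrangian morphisms; applied to the data above this produces a Lagrangian structure on $(q,p)\colon\dCx(C)\to\RR\Perf(C)\times\dVt^\gr(C)$ with $1$-shifted symplectic target, and one checks that the transgressed forms on the two factors are the usual Serre-duality ones. A Lagrangian structure on a morphism $L\to X$ into an $n$-shifted symplectic stack is in particular a coisotropic structure, and such a structure equips $L$ with an $(n-1)$-shifted Poisson structure; for $n=1$ this is the $0$-shifted Poisson structure on $\dCx(C)$.

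Since $(q,p)$ lands in a \emph{product}, each of $q$ and $p$ is moreover a Casimir morphism: the composite $q^*\LL_{\RR\Perf(C)}\to\LL_{\dCx(C)}\xrightarrow{\Pi_h}\TT_{\dCx(C)}$ is null-homotopic, and similarly for $p$, so the $0$-shifted Poisson bivector is tangent to the homotopy fibers of $q$ and of $p$, and these fibers inherit a $0$-shifted Poisson structure. For the last assertion I would check that the inclusion of the stacky point $x$ into $\RR\Perf(C)$ --- resp.\ of $y$ into $\dVt^\gr(C)$ --- is itself Lagrangian: the $1$-shifted form on $\RR\Perf(C)$ arises from the Serre-duality self-pairing of $\TT_{\RR\Perf(C)}$, which at $x$ equals $\RR\Hom_C(\mathcal{F},\mathcal{F})[1]$ (for $x$ the stacky point of $\mathcal{F}\in\D^b(C)$), and the stacky point is defined precisely so that the relative tangent complex of $x\to\RR\Perf(C)$ realizes the Lagrangian non-degeneracy $\TT_{x/\RR\Perf(C)}\simeq\LL_x$. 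The homotopy fiber of $(q,p)$ at $(x,y)$ is then the derived intersection of the two Lagrangians $(q,p)$ and $\{(x,y)\}$ inside the $1$-shifted symplectic stack $\RR\Perf(C)\times\dVt^\gr(C)$, so by the Lagrangian intersection theorem \cite{PTVV} it carries a $0$-shifted symplectic structure, and one verifies this structure restricts the Poisson structures obtained above.

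The main obstacle, I expect, is establishing the universal Lagrangian structure on $\dCx(\mathrm{pt})\to\Perf\times\Vect^\gr$ together with the closely related point that the stacky points are Lagrangian: in both cases one must produce honest closed $2$-forms and isotropy homotopies, not merely abstractly self-dual tangent complexes, and the stacky (non-schematic) nature of the ``points'' must be handled so that the relevant relative cotangent complex is literally $\LL[n-1]$ and not just quasi-isomorphic to it. These computations are precisely those carried out in \cite[Theorem 3.13, Corollaries 3.20 and 3.21]{HP17}.
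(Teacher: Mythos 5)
Your proposal is correct and follows essentially the same route as the source: the paper does not reprove this theorem but imports it from \cite{HP17}, and the construction you sketch --- transgression of the $2$-shifted symplectic structures on $\Perf$ and $\Vect^\gr$ along the $1$-oriented Calabi--Yau curve $C$, a Lagrangian structure on $(q,p)$ yielding the $0$-shifted Poisson structure via the coisotropic formalism, and the fibers of $(q,p)$ as derived Lagrangian intersections --- is precisely the mechanism described in Section \ref{sec:shiftedPoi} and carried out in \cite{HP17}. The only cosmetic difference is your ``Casimir'' heuristic for why the fibers of $q$ and $p$ separately inherit the Poisson structure, where the cleaner argument (used in \cite{HP17}) is that the fiber of $q$ over a Lagrangian stacky point is itself Lagrangian over the remaining factor $\dVt^\gr(C)$.
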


As an analogue of the terminology in classical Poisson geometry, we call a non-empty homotopy fiber of the map $(q,p)$ a \emph{derived symplectic leaf} of the $0$-shifted Poisson structure on $\dCx(C)$.
In Section \ref{sec:symleaves} we will see that if restrict to the complexes of the form $E\to E(D)$, then
truncating the derived structure on each derived symplectic leaf and passing to the coarse moduli spaces
we get the usual symplectic leaves on the classical Poisson variety $\PP \Hom(E,E(D))$.

\subsection{Poisson structure on $M(E,D)$}

For a fixed elliptic curve $C$ let $\Trp$ denote the derived moduli stack of triples $(V_0\rTo{\phi} V_1)$ on $C$,
viewed as an open substack in $\dCx(C)$ consisting of complexes concentrated in degrees $0$ and $1$.
Note that at the point where $\Ext^1(V_0,V_1)=0$ the derived structure is trivial, so near such a point we have a
smooth Artin stack. For fixed $V_0$ and $V_1$ with $\Ext^1(V_0,V_1)=0$ the homotopy fiber of the morphism
$p:\dCx(C)\to \dVt^\gr(C)$ over $V_0\oplus V_1[-1]$ is a smooth Artin stack which we denote as $\Trp_{V_0,V_1}$.
By Theorem \ref{HPmainthm}, we get a $0$-shifted Poisson structure on $\Trp_{V_0,V_1}$.

Let us fix a stable vector bundle $E$ on $C$ of rank $r$, and let $D$ be a divisor of degree $k>0$. 
We are interested in the open substack
$$\cM(E,D)\sub \Trp_{E,E(D)}$$ 
consisting of $\phi\neq 0$, with its $0$-shifted Poisson structure.
Note that $\cM(E,D)$ is a trivial $\GG_m$-gerbe over its coarse moduli space, denoted by $M(E,D)$, which is isomorphic to 
the projective space $\PP \Hom(E,E(D))$. 
{\bf{Unless different bundle $E$ and divisor $D$ are considered at the same time, we will denote $\cM(E,D)$ simply
by $\cM$.}}

\begin{prop}
The 0-shifted Poisson structure on $\cM$ descends to its coarse moduli space $M=M(E,D)$.
\end{prop}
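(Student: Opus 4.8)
The plan is to descend the 0-shifted Poisson structure along the $\GG_m$-gerbe map $\cM\to M$. Since $\cM=\cM(E,D)$ is a trivial $\GG_m$-gerbe over $M=\PP\Hom(E,E(D))$, we may choose a section $\sigma:M\to\cM$ exhibiting $\cM\simeq M\times B\GG_m$ (or at least, after passing to the open locus where $\Ext^1(E,E(D))=0$, so that $\cM$ is a smooth Artin stack and the derived structure is trivial). The cotangent complex then splits as $\LL_\cM\simeq \pi^*\LL_M\oplus(\text{the piece from }B\GG_m)$, and dually for $\TT_\cM$, where $\pi:\cM\to M$ is the projection. Concretely, at a point $\phi\in\Hom(E,E(D))$ representing a point of $M$, the tangent space to $\cM$ is $\Ext^0(E,E(D))/(\text{scalars})\oplus(\text{central }\GG_m\text{ direction})$, with the $B\GG_m$ summand accounting for the automorphisms. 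The first step is to write down this splitting carefully at the level of (co)tangent complexes, identifying the $B\GG_m$-factor with $k[0]\oplus k[-1]$ type contributions coming from $\Aut(\GG_m)$ acting trivially.

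Next I would show that the bivector $\Pi_h:\LL_\cM\to\TT_\cM$ (the classical shadow, via $H^0$) annihilates and lands in the complement of the gerbe direction, i.e.\ it restricts to a map $\pi^*\LL_M\to\pi^*\TT_M$ that is $\GG_m$-equivariant and hence descends to $\Pi_{\bar h}:\LL_M\to\TT_M$ on the coarse space. The key point here is that the Poisson bivector on $\cM$, being $\GG_m$-invariant by construction (it comes from a canonical construction on $\dCx(C)$ that is equivariant for all automorphisms, in particular for rescaling $\phi$), cannot have any component pairing the gerbe direction with anything: the $\GG_m$ acts trivially on the gerbe direction and with nontrivial weight on the "projective" directions, so by weight considerations the mixed components of the bivector vanish. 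This reduces the claim to checking that the restriction of the bivector to the weight-zero part of $\LL_\cM$ is well-defined, which is automatic once equivariance is established. Alternatively, and perhaps more cleanly, one argues that any $\GG_m$-invariant bivector on a trivial $\GG_m$-gerbe is pulled back from a bivector on the base, and that the Poisson (Jacobi) condition descends as well since it is a closed condition preserved by pullback along the smooth surjection $\pi$.

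The Jacobi identity for the descended structure follows from the Jacobi identity upstairs: the Schouten bracket $[\bar h,\bar h]$ pulls back to $[\pi^*\bar h,\pi^*\bar h]=[h,h]$ modulo the gerbe contributions, which vanish, and since $\pi^*$ is faithful (injective on functions, as $\pi$ is a surjective submersion of stacks with the coarse space being a good quotient for the $\GG_m$-action) we conclude $[\bar h,\bar h]=0$. The one subtlety worth spelling out is that "0-shifted Poisson structure" in the derived sense is a richer datum than a mere bivector — it is an element of a certain $L_\infty$-algebra of polyvectors — but on the open locus where $\Ext^1(E,E(D))=0$ the stack $\cM$ is (the $\GG_m$-gerbe over) a smooth variety, so the higher components are forced to vanish and the data reduces to an honest Poisson bivector in the classical sense; see the discussion following Theorem \ref{HPmainthm}. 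Thus the whole argument takes place in classical Poisson geometry of the smooth stack $\cM$ and its coarse space.

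The main obstacle I anticipate is the bookkeeping around the $\GG_m$-gerbe structure and the (co)tangent complex of $B\GG_m$: one must be careful that the gerbe is genuinely trivial (which the paper asserts, since $M(E,D)\cong\PP\Hom(E,E(D))$ and the universal complex can be rigidified) and that the splitting of $\LL_\cM$ is compatible with the Poisson bivector's weight decomposition. A secondary point is justifying that it suffices to work on the open locus where the derived structure is trivial, or else handling the genuinely derived points — but for the \emph{coarse moduli space} statement one only needs the classical shadow $H^0(\Pi_h)$, which is a map of sheaves on the classical truncation, so the derived subtleties do not actually enter. Once the equivariance and the weight argument are in place, the descent of both the bivector and the Jacobi identity is formal.
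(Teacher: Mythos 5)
Your overall strategy --- descend the classical shadow $H^0(\Pi)$ along the $\GG_m$-gerbe $f:\cM\to M$ --- is the same as the paper's, and your reduction to the classical shadow (so that the derived and higher $L_\infty$ data play no role) is exactly the point the paper makes by observing that the derived structure on $\cM$ is trivial. However, the step you single out as ``the key point'' rests on a false premise. The $\GG_m$ banding the gerbe $\cM\to M$ is the \emph{inertial} $\GG_m$ (the scalar automorphisms of the complex $E\to E(D)$), and it acts with weight \emph{zero} on $H^0(\TT_\cM)\cong f^*T_M$ --- that is essentially what it means for $\cM$ to be a gerbe over its coarse space rather than a quotient stack with varying stabilizers. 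You have conflated it with the scaling $\GG_m$ acting with weight one on $A(E,D)\setminus\{0\}$; that torsor $A(E,D)\setminus\{0\}\to M$ is a different map and does not enter this proposition. So ``weight considerations'' cannot kill any mixed components: all the weights in sight are zero, and the argument as stated proves nothing.

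Fortunately, no such argument is needed. The gerbe contributions to the tangent and cotangent complexes sit in cohomological degrees $-1$ (for $\TT_\cM$) and $+1$ (for $\LL_\cM$), so they simply do not appear in the degree-zero map $H^0(\Pi):H^0(\LL_\cM)\to H^0(\TT_\cM)$; one has $H^0(\LL_\cM)\cong f^*\Omega^1_M$ and $H^0(\TT_\cM)\cong f^*T_M$ on the nose, with no choice of trivialization or splitting of the gerbe required. A morphism $f^*\Omega^1_M\to f^*T_M$ then descends to a morphism $\Omega^1_M\to T_M$ because $f_*\cO_\cM\cong\cO_M$ (equivalently, $f^*$ is fully faithful on quasi-coherent sheaves for a gerbe). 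This is the paper's two-line proof. Your discussion of the Jacobi identity --- pulling back $[\bar h,\bar h]$ and using faithfulness of $f^*$ on functions --- is sound, and in fact goes beyond what the paper records at this point.
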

\begin{proof}
By the vanishing of hypercohomology, the derived structure on $\cM$ is trivial. Because $f:\cM\to M$ is a $\GG_m$-gerbe,
\[
f^*T_{M} \cong H^0(\TT_{\cM}), ~~f^*\Omega^1_{M} \cong H^0(\LL_{\cM}).
\]
Hence, the classical shadow of $\Pi$ is a morphism $H^0(\Pi):  f^*\Omega_{M} \to f^*T_{M}$. But $f_*\cO_{M} \cong \cO_{\cM}$, since $f$ is a $\GG_m$-gerbe, so $H^0(\Pi)$ descends to a morphism $\Omega_{M} \to T_{M}$.
\end{proof}

Let us also set
$$A(E,D):=\Hom(E,E(D)).$$
We are going to show that the Poisson structure on $M(E,D)$ lifts in a canonical way to a quadratic Poisson structure
on a vector space $A(E,D)$ (see Theorem \ref{poisson-bivector-thm} below).

First, we are going to give an explicit formula for the Poisson structure on $\cM$. 

The tangent complex  of $\cM^\pp_{r,d}$ at $(E,E(D),\phi)$, denoted by $\TT_\phi$,  is quasi-isomorphic to $\Gamma(C^{cos},\cC^\bullet[1])$, where $C^{cos}$ is a co-simplicial resolution of $C$ and $\cC^\bullet$ is the complex of vector bundles
\[\xymatrix{
\sEnd(E)\oplus\sEnd(E)\ar[r]^\partial & \sEnd(E)(D),}
\]
with differential $\partial$  defined by
\[
\partial(\alpha,\beta)=\alpha\phi -  \phi \beta
\] for any local sections $(\alpha,\beta)$  of $\sEnd(E)\oplus\sEnd(E)$. 
Using Cech model, $\Gamma(C^{cos},\cC^\bullet[1])$ is taken to be the Cech cochain complex that computes the hypercohomology of $\cC^\bullet$.

The cotangent complex, denoted by $\LL_\phi$, is quasi-isomorphic to $\Gamma(C^{cos},(\cC^\bullet)^\vee)$. 
The complex $(\cC^\bullet)^\vee[-1]$ can be written as
\[\xymatrix{
\sEnd(E^\vee)(-D)\ar[r]^{-\partial^\vee} & \sEnd(E^\vee)\oplus\sEnd(E^\vee)}
\]
where $-\partial^\vee(\psi)=(-\phi\psi,\psi\phi)$ for a local section $\psi$.

It is computed in Theorem 4.7 of \cite{HP17} that the 0-shifted Poisson structure $\Pi_h$ (we will omit the homotopy $h$ and write $\Pi$ instead) on $\dCx(C)$ is induced by the chain map
\[
\pi:=(0,\partial\circ\ft): (\cC^\bullet)^\vee[-1]\to \cC^\bullet
\]
where $\ft:=t_0-t_1 : \sEnd(E^\vee)\oplus\sEnd(E^\vee)\to \sEnd(E)\oplus\sEnd(E)$ with $t_i$ is the auto duality isomorphism
\[
t_0: \sEnd(E^\vee)\to \sEnd(E),~~ t_1: \sEnd((E(D))^\vee)\to \sEnd(E(D)).
\]
Let $\psi$ and $(a,b)$ be local sections of $\sEnd(E^\vee)(-D)$ and $\sEnd(E^\vee)\oplus\sEnd(E^\vee)$, we have
\begin{equation}\label{Poi-0}
\pi(\psi,(a,b))= (0,a\phi+\phi b ).
\end{equation}

The determinant map induces an isomorphism between $\cU_{r,d}$ and the Picard stack $Pic^d(C)$. Composing it with $p$, we get
\[
\det\circ p: \cM^\pp_{r,d}\to \cU_{r,d}\times \cU_{r,d}\to Pic^d(C)\times Pic^d(C).
\]
For simplicity of notations, we denote $\sEnd(E)$ by $\fg$ and the sheaf of trace-free sections $\sEnd_0(E)$ by $\fs$. 

The stupid truncation of $\cC^\bullet$ and $(\cC^\bullet)^\vee[-1]$ gives two long exact sequences of cohomology groups
\begin{equation}\label{les1}
\xymatrix{
0\ar[r] & \HH^0(\cC^\bullet)\ar[r]^{p_*} &H^0(\fg)^{\oplus 2}\ar[r] &H^0(\fg(D))\ar[r]&\HH^1(\cC^\bullet)\ar[r]^{p_*} & H^1(\fg)^{\oplus 2}\ar[r] &0
}
\end{equation}
\begin{equation}\label{les2}
\xymatrix{
0\ar[r] & H^0(\fg^\vee)^{\oplus 2}\ar[r]^{p^*} &\HH^0((\cC^\bullet)^\vee)\ar[r]  &H^1(\fg^\vee(-D))\ar[r]&H^1(\fg^\vee)^{\oplus 2}\ar[r]^{p^*} &  \HH^1((\cC^\bullet)^\vee)\ar[r] &0
}
\end{equation}
The tangent map of the determinant map is
\begin{align*}
(\tr,\tr): H^0(\fg)^{\oplus 2}\to H^0(\cO_C)^{\oplus 2}.
\end{align*}
Because $E$ is stable, this is an isomorphism
with inverse $(\frac{\tr}{r}\Id,\frac{\tr}{r}\Id)$. We identify $\HH^i(\fg)$ (resp. $\HH^i(\fg^\vee)$) in the exact sequences with $H^i(\cO_C)$ by this isomorphism. 

Let $(U_+,U_-)$ be an open affine covering of $C$ and let us set $U_\pm:=U_+\cap U_-$. 
A covector of $\cM$ is an element of cokernel of $p^*: H^0(\fg^\vee)^{\oplus 2}\to\HH^1((\cC^\bullet)^\vee[-1])$. 
It can be represented by a cocycle $(\psi_\pm,a_+,a_-,b_+,b_-)$, where $\psi_\pm\in \fg(-D)(U_\pm)$, $a_+,b_+\in \fg(U_+)$, $a_-,b_-\in \fg(U_-)$. The cocycle condition is 
\begin{align} \label{cocycOmega}
-\phi\psi_\pm=a_+-a_-,&&\psi_\pm\phi=b_+-b_-.
\end{align} 
This cocycle is considered up to coboundary and up to adding
elements of the form $(0, \lambda\cdot \Id,\lambda\cdot \Id,\mu\cdot \Id,\mu\cdot \Id)$ for any scalars $\lambda,\mu$.
Let $a$ be a local section of $\fg$.
Define 
\[
\pr(a):=a-\frac{\tr(a)}{r}\Id. 
\]
Clearly $\pr(a)$ is a local section of $\fs\subset \fg$. By stability of $E$, we have $H^i(\fs)=0$ for $i=0,1$. Hence, there is a 
canonical isomorphism 
\[
\Gamma(U_+,\fs)\oplus\Gamma(U_-,\fs)\cong \Gamma(U_\pm,\fs).
\]
Let us denote by $P_+:\Gamma(U_\pm,\fs)\to \Gamma(U_+,\fs)$ and $P_-:\Gamma(U_\pm,\fs)\to \Gamma(U_-,\fs)$ the corresponding projections. Using the decomposition $\fg\cong\fs\oplus \cO_C$  we can rewrite equation \eqref{cocycOmega} as
\begin{align*} 
\pr(a_+)=-P_+(\pr(\phi\psi_\pm)),&& \pr(b_+)=P_+(\pr(\psi_\pm\phi)),\\
\pr(a_-)=P_-(\pr(\phi\psi_\pm)),&& \pr(b_-)=-P_-(\pr(\psi_\pm\phi)),\\
\tr(a_+-a_-)=-\tr(\phi\psi_\pm),&& \tr(b_+-b_-)=\tr(\psi_\pm\phi).
\end{align*} 

Now we can rewrite the expression in formula \eqref{Poi-0} as
\begin{align*} 
a_+\phi+\phi b_+&=(\pr(a_+)+\frac{\tr(a_+)}{r}\Id)\phi+\phi(\pr(b_+)+\frac{\tr(b_+)}{r}\Id)\\
&=[-P_+(\pr(\phi\psi_\pm))\phi+\phi P_+(\pr(\psi_\pm\phi))]+\frac{\tr(a_++b_+)}{r}\phi.
\end{align*} 
Recall that $\Pi_\phi$ can be viewed as a linear map with the target $H^0(\fg(D))/\lan\phi\ran$, so the above formula
gives
 \begin{equation}\label{Poi-2}
\Pi_\phi(\psi_\pm,a_+,b_+)\equiv -P_+(\pr(\phi\psi_\pm))\phi+\phi P_+(\pr(\psi_\pm\phi)) \mod \lan\phi\ran.
\end{equation}

Let us consider the bivector $\wt{\Pi}$ on $\cA(E,D)$ corresponding to the linear map
$H^1(\fg(-D))\to H^0(\fg(D))$ given by
\begin{equation}\label{Poi-3}
\wt{\Pi}(\psi_\pm):= -P_+(\pr(\phi\psi_\pm))\phi+\phi P_+(\pr(\psi_\pm\phi)).
\end{equation}

Thus, \eqref{Poi-2} is an explicit formula for the Poisson bivector $\Pi$ on $M(E,D)$
while \eqref{Poi-3} defines a bivector $\wt{\Pi}$ on $A(E,D)$, which is compatible with $\Pi$
via the natural projection $A(E,D)\setminus\{0\}\to M(E,D)$.
Below we will prove that in fact $\wt{\Pi}$ defines a Poisson structure on $A(E,D)$.

\subsection{Determinant map and Poisson structure on $A(E,D)$}\label{sec:det}

Before turning to the bivector $\wt{\Pi}$ on $A(E,D)$ we make some general observations about Poisson brackets.

\begin{definition} Let $X$ be a scheme equipped with a morphism $h:\Om_X\to \cT_X$, such that the corresponding
bracket on $\cO$ given by $\{\phi,\psi\}=\lan d\phi,h(d\psi)\ran$ is skew-symmetric. A morphism $f:X\to Y$,
where $Y$ is a scheme with no extra structure, is called 
{\it Casimir} if pull-backs of local functions on $Y$ have zero brackets with functions on $X$. Equivalently,
the composition $f^*\Om_Y\to \Om_X\rTo{h} \cT_X$ should be zero.
\end{definition}

If $X$ and $Y$ are smooth varieties then the condition that $f:X\to Y$ is Casimir with respect to a bivector $h$ 
is equivalent to saying that for every $x\in X$, the composition 
$$T^*_x X\rTo{h_x} T_x X\rTo{df_x} T_{f(x)}Y$$
is zero. In the case $f$ is smooth this is equivalent to the fact that $h:\Om_X\to\cT_X$ factors as
$$\Om_X\to \Om_{X/Y}\to \cT_{X/Y}\to \cT_X$$
where $\cT_{X/Y}$ (resp., $\Om_{X/Y}$) is the relative tangent (resp., cotangent) bundle.

It is easy to see that if $f:X\to Y$ is a Casimir morphism that factors through a closed subscheme $Z\sub Y$ then
the corresponding morphism $X\to Z$ is still Casimir.
 
In the case when $X$ is a Poisson scheme, and $f:X\to Y$ is a Casimir morphism then every fiber of $f$ is a 
Poisson subscheme. 

\begin{lemma}\label{poisson-Casimir-lem}
Let $f:X\to Z$ be a smooth morphism of smooth varieties, and let $h\in \Gamma(X,\bigwedge^2 \cT_X)$ be a bivector on $X$ such that $f$ is Casimir. Then $h$ defines a Poisson structure if and only if for every $z\in Z$ the induced
bivector $h_z$ on $f^{-1}(z)$ defines a Poisson structure on $f^{-1}(z)$.
\end{lemma}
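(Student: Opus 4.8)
The plan is to reduce the claim, which concerns a global object on $X$, to a pointwise statement about the Jacobiator and then to exploit the smoothness of $f$ together with the Casimir hypothesis to localize everything along the fibers. Recall that a bivector $h$ is Poisson if and only if the Schouten--Nijenhuis bracket $[h,h]\in\Gamma(X,\bigwedge^3\cT_X)$ vanishes; equivalently, for every point $x\in X$ the Jacobiator $J_x(\alpha,\beta,\gamma):=\{\{\alpha,\beta\},\gamma\}+\{\{\beta,\gamma\},\alpha\}+\{\{\gamma,\alpha\},\beta\}$ evaluated on germs of functions $\alpha,\beta,\gamma$ at $x$ is zero. Since $[h,h]$ is a section of a vector bundle, it suffices to check its vanishing on a dense open subset, so we may freely work near an arbitrary point $x\in X$ and set $z=f(x)$.

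The first step is to set up good local coordinates. Because $f$ is smooth, near $x$ we may choose coordinates $(u_1,\ldots,u_p;v_1,\ldots,v_q)$ on $X$ and $(u_1,\ldots,u_p)$ on $Z$ so that $f$ is the projection $(u,v)\mapsto u$; here the $v_j$ restrict to coordinates on the fiber $f^{-1}(z)$. The Casimir condition says precisely that the composition $f^*\Om_Z\to\Om_X\rTo{h}\cT_X$ is zero, i.e.\ $h(du_i)=0$ for all $i$, equivalently $h^{ij}:=\{u_i,u_j\}=0$ and $\{u_i,v_j\}=0$ for all $i,j$. Consequently $h$ has only ``vertical'' components: $h=\tfrac12\sum_{j,l} h^{v_jv_l}(u,v)\,\partial_{v_j}\wedge\partial_{v_l}$, where the coefficients may depend on the base coordinates $u$ as parameters. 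Restricting to the fiber $f^{-1}(z)$ (i.e.\ fixing $u$ to its value at $x$) gives exactly the induced bivector $h_z$ on the fiber; this is the precise meaning of ``the induced bivector $h_z$ on $f^{-1}(z)$'' and one should note that smoothness of $f$ is what guarantees $h_z$ is a well-defined bivector on the smooth variety $f^{-1}(z)$.

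The second step is a direct computation of the Jacobiator in these coordinates. Since it is a triderivation, $J$ is determined by its values on coordinate functions, and since it is alternating and all brackets among the $u_i$ (and between $u_i$ and $v_j$) vanish, the only potentially nonzero evaluations are $J(v_j,v_l,v_m)$. Expanding $\{\{v_j,v_l\},v_m\}=\{h^{v_jv_l},v_m\}$ and using that $\{u_i,-\}=0$, every partial derivative $\partial_{u_i}$ that would appear when differentiating the coefficient $h^{v_jv_l}(u,v)$ is killed, so only the $\partial_{v}$-derivatives survive. Therefore $J(v_j,v_l,v_m)$ computed for $h$ on $X$ coincides, coefficient by coefficient, with the Jacobiator of $h_z$ on the fiber $f^{-1}(z)$, now viewing $u$ as frozen parameters. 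Hence $[h,h]=0$ on a neighbourhood of $x$ in $X$ if and only if $[h_z,h_z]=0$ on a neighbourhood of $x$ in $f^{-1}(z)$ for the value $z=f(x)$. Running $x$ over all of $X$ yields the equivalence in the statement.

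The main obstacle is making the ``parametrized'' comparison of Jacobiators fully rigorous rather than coordinate-bookkeeping: one must be careful that the identification of $h$ with a family of vertical bivectors is consistent on overlaps of coordinate charts (so that the conclusion is genuinely global), and that the Schouten bracket is compatible with restriction to a fiber of a smooth morphism. This is most cleanly handled by phrasing the vanishing of $[h,h]$ as a section of $\bigwedge^3\cT_{X/Z}$ — which is legitimate precisely because the Casimir condition forces $h$ to lie in $\bigwedge^2\cT_{X/Z}\subset\bigwedge^2\cT_X$ and the Schouten bracket of two such elements again lies in $\bigwedge^3\cT_{X/Z}$ — and then invoking that, for $f$ smooth, $\cT_{X/Z}$ restricts fiberwise to the tangent bundle of the fiber and the relative Schouten bracket restricts to the Schouten bracket of the fiber. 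Once this structural point is in place, the equivalence is immediate, since a section of a vector bundle on $X$ vanishes iff its restriction to every fiber of $f$ (which is a surjection, $f$ being smooth) vanishes.
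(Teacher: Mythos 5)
Your proof is correct and rests on the same idea as the paper's: the Casimir condition forces $h$ to be a section of $\bigwedge^2\cT_{X/Z}$, so the Jacobiator only involves data along the fibers. The paper phrases this coordinate-freely (the Hamiltonian vector fields $h(\om_i)$ and their Lie brackets depend only on restrictions of the $\om_i$ to the fibers), while you carry out the equivalent computation in coordinates adapted to the smooth morphism; the substance is the same.
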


\begin{proof}
The condition that $h$ defines a Poisson structure is equivalent to the identity
\begin{equation}\label{Jacobi-identity-omega}
h(\om_1)\cdot \lan h(\om_2),\om_3\ran-\lan [h(\om_1),h(\om_2)],\om_3\ran + c.p.(1,2,3)=0
\end{equation}
for any local $1$-forms $\om_1,\om_2,\om_3$ on $X$ (where the omitted terms are obtained
by cyclic permutation of the indices $1,2,3$). The fact that $f$ is Casimir means that
the vector fields $h(\om_i)$ belong to the relative tangent bundle $\cT_{X/Z}\sub \cT_X$. Hence, for a local
function $\phi$ on $X$, the value $h(\om_i)\cdot \phi$ at $x\in X$ depends only on the restriction of $\phi$ to
the fiber of $f$ containing $x$. Similarly, the value of the vector field $[h(\om_i),h(\om_j)]$ at $x$ depends
only on the restrictions $\om_i|_{f^{-1}(f(x))}$, $\om_j|_{f^{-1}(f(x))}$. It follows that \eqref{Jacobi-identity-omega}
holds if and only if the same identity holds for local $1$-forms on every fiber $f^{-1}(z)$ (with $h$ replaced by $h_z$).
\end{proof}

We have the following general result about Poisson structures.

\begin{prop}\label{gen-casimir-prop}
Let $p:X\to \ov{X}$ and $f:X\to Z$ be smooth surjective maps of smooth varieties, and let $h\in \bigwedge^2\cT_X$,
$\ov{h}\in \bigwedge^2 \cT_{\ov{X}}$ be compatible bivectors. Assume that $f$ is Casimir with respect to $h$,
and that for every $x\in X$ the map
$$T_x X\rTo{(dp,df)} T_{p(x)}\ov{X}\oplus T_{f(x)}Z$$
is injective. Then $h$ defines a Poisson structure on $X$ if and only if $\ov{h}$ defines a Poisson structure on $\ov{X}$.
\end{prop}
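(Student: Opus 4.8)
The plan is to combine Lemma~\ref{poisson-Casimir-lem} with a descent argument along the map $p$. First I would reduce both conditions to fiberwise statements using the maps $f$ and $\ov{f}$. Since $f:X\to Z$ is Casimir with respect to $h$, Lemma~\ref{poisson-Casimir-lem} says that $h$ is Poisson on $X$ if and only if the induced bivector $h_z$ is Poisson on $f^{-1}(z)$ for every $z\in Z$. Dually, I would first need to observe that the bivector $\ov{h}$ also makes $\ov{X}$ into something to which a similar reduction applies: the hypothesis that $p$ and $f$ together separate tangent directions, combined with compatibility of $h$ and $\ov{h}$, should force the existence of a factorization making $\ov{h}$ itself ``transverse'' in the relevant sense. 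Concretely, since $h$ is $p$-related to $\ov{h}$ and $h$ lands in $\cT_{X/Z}$, the key point is that $\ov{h}$ can be analyzed on the images $p(f^{-1}(z))$.

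The heart of the argument is a local comparison of the two Jacobiator identities \eqref{Jacobi-identity-omega}, one on $X$ (with $h$) and one on $\ov{X}$ (with $\ov{h}$). Fix $x\in X$ and set $\ov{x}=p(x)$, $z=f(x)$. By the injectivity hypothesis, the map $(dp,df)_x:T_xX\to T_{\ov{x}}\ov{X}\oplus T_zZ$ embeds $T_xX$ as a subspace; dually, the codifferentials span the cotangent space, so every local $1$-form on $X$ near $x$ is, modulo the relevant ideals, a combination of pullbacks $p^*\ov{\om}$ and $f^*\eta$. Since $h$ is Casimir for $f$, the vector fields $h(\om_i)$ kill pullbacks $f^*\eta$; thus in \eqref{Jacobi-identity-omega} on $X$ the $f^*\eta$ components contribute nothing, and only the $p^*\ov{\om}$ components matter. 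For those, $h(p^*\ov{\om}_i)$ is $p$-related to $\ov{h}(\ov{\om}_i)$, the pairings $\lan h(p^*\ov{\om}_1),p^*\ov{\om}_2\ran$ equal $p^*\lan \ov{h}(\ov{\om}_1),\ov{\om}_2\ran$, and the brackets $[h(p^*\ov{\om}_1),h(p^*\ov{\om}_2)]$ are $p$-related to $[\ov{h}(\ov{\om}_1),\ov{h}(\ov{\om}_2)]$. Therefore the Jacobiator of $h$ evaluated on pullback forms at $x$ equals the pullback of the Jacobiator of $\ov{h}$ evaluated on $\ov{\om}_1,\ov{\om}_2,\ov{\om}_3$ at $\ov{x}$. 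Because $p$ is smooth and surjective, $dp_x$ is surjective, so every $1$-form at $\ov{x}$ arises this way, and vanishing of one Jacobiator at $x$ is equivalent to vanishing of the other at $\ov{x}$; running $x$ over all of $X$ and using surjectivity of $p$ gives the equivalence of the two Poisson conditions.

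The main obstacle I anticipate is bookkeeping the fact that \eqref{Jacobi-identity-omega} must hold for \emph{all} local $1$-forms, not merely pullbacks, and that the two ``omitted'' kinds of form ($f^*\eta$ on the $X$ side, and forms transverse to the image of $p$ on the $\ov{X}$ side) really do drop out. On the $X$ side this is exactly the Casimir property, which handles the $f^*\eta$ directions cleanly; what requires care is to argue that a general $1$-form on $X$ near $x$ differs from a $p^*\ov{\om}+f^*\eta$ combination by a form vanishing at $x$, and that such a form, together with the derivative-of-pairing terms, contributes nothing to the value of the Jacobiator at the single point $x$ — this is the standard tensoriality of the Schouten bracket expression, but it needs to be invoked, not merely asserted. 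Once that pointwise tensoriality is in hand, the rest is the $p$-relatedness computation sketched above, which is routine. An alternative, perhaps cleaner, organization is: use Lemma~\ref{poisson-Casimir-lem} to replace $X$ by a fiber $Y:=f^{-1}(z)$, note $p|_Y:Y\to p(Y)=:\ov{Y}$ is a smooth surjection onto a locally closed subvariety of $\ov{X}$ with $dp|_Y$ injective (hence a submersion onto a smooth subvariety on which $\ov{h}$ restricts), and then invoke the elementary fact that for a submersion $Y\to\ov{Y}$ with $p$-related bivectors, one is Poisson iff the other is; assembling over all $z\in Z$ recovers the statement for $\ov{X}$ since the $\ov{Y}$ cover $\ov{X}$.
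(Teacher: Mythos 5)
Your proposal is correct, and for the ``if'' direction it takes a genuinely different (and arguably cleaner) route than the paper. The ``only if'' half coincides with the paper's: both use that $h(p^*\om)$ is $p$-related to $\ov{h}(\om)$, that brackets of $p$-related fields are $p$-related, and hence that the Jacobiator of $h$ on pullback forms equals $p^*$ of the Jacobiator of $\ov{h}$. For the ``if'' half the paper instead invokes Lemma~\ref{poisson-Casimir-lem} to reduce to the fibers $f^{-1}(z)$, uses the injectivity hypothesis to see that $p|_{f^{-1}(z)}$ is an immersion into $\ov{X}$, and then argues that the closure of its image is a Poisson subvariety over a dense open of which the map is \'etale, so the induced bivector is an \'etale pullback of a Poisson structure. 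You avoid all of that by exploiting tensoriality of the Jacobiator: at each $x$, surjectivity of $p^*T^*_{p(x)}\ov{X}\oplus f^*T^*_{f(x)}Z\to T^*_xX$ (dual to the injectivity hypothesis), the vanishing of the Jacobiator whenever one slot is an $f^*$-covector (from the Casimir property, since all $h(\om)$ and their brackets lie in $\cT_{X/Z}$), and the identity on $p^*$-covectors give $[h,h]_x=0\iff[\ov{h},\ov{h}]_{p(x)}=0$, and surjectivity of $p$ finishes both implications at once. This buys a uniform pointwise argument that sidesteps the paper's somewhat delicate claim that the closure of $p(f^{-1}(z))$ is Poisson; the cost is that you must justify that the cyclic sum \eqref{Jacobi-identity-omega} is $\cO$-trilinear in the $\om_i$ (i.e.\ computes the pairing of the trivector $[h,h]$ against $\om_1\wedge\om_2\wedge\om_3$), which you correctly flag as the one point needing an explicit check. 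One caveat on your parenthetical ``alternative organization'': $p|_{f^{-1}(z)}$ has injective differential, so it is an immersion (generically \'etale onto its image), not a submersion, and the fact that $\ov{h}$ restricts to that image is not automatic --- this is exactly the step the paper has to argue --- so your main Jacobiator argument is the one to keep.
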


\begin{proof} The fact that $h$ and $\ov{h}$ are compatible means that for evey $x\in X$, the map 
$\ov{h}:T^*_{p(x)}\ov{X}\to T_{p(x)}\ov{X}$
factors as the compoisition
$$T^*_{p(x)}\ov{X}\rTo{p^*} T^*_x X\rTo{h} T_xX\rTo{dp} T_{p(x)}\ov{X}.$$
It follows that for any local $1$-form $\om$ on $\ov{X}$ the vector field $h(p^*\om)$ on $X$ belongs to the subsheaf
\[
\cT_p:=(dp)^{-1}(p^{-1}\cT_{\ov{X}})\sub \cT_X \]
and has the property $dp(h(p^*\om))=\ov{h}(\om)$.
Furthermore, the natural projection $dp:\cT_p\to p^{-1}\cT_{\ov{X}}$ is compatible with the Lie brackets, so we have
$$dp([h(p^*\om_1),h(p^*\om_2)])=[\ov{h}(\om_1),\ov{h}(\om_2)].$$
Using this it is easy to see that the identity \eqref{Jacobi-identity-omega} for the bivector $h$ and
the $1$-forms $p^*\om_1$, $p^*\om_2$, $p^*\om_3$ on $X$ is equivalent to the same identity for the bivector $\ov{h}$
and the $1$-forms $\om_1,\om_2,\om_3$ on $\ov{X}$.
This implies the ``only if" part (for this part we do not need the map $f$).

Now let us prove the ``if" part. 
By Lemma \ref{poisson-Casimir-lem}, it is enough to check that for every $z\in Z$ the induced bivector $h_z$ 
on $f^{-1}(z)\sub X$ defines a Poisson structure on $f^{-1}(z)$. But our assumption implies that the restricted map
$$p|_{f^{-1}(z)}: f^{-1}(z)\to \ov{X}$$ 
induces embedding on tangent spaces. Since this map is compatible with the
bivectors $h_z$ and $\ov{h}$ and the latter defines a Poisson structure on $\ov{X}$, it follows that $h_z$ defines a 
Poisson structure on $f^{-1}(z)$. Indeed, the closure of the image of $p|_{f^{-1}(z)}$ is a Poisson subvariety of $\ov{X}$,
so we can restrict to its open subset over which $p|_{f^{-1}(z)}$ is \'etale, and identify the induced bivector
with the \'etale pull-back of a Poisson structure, which is always Poisson.
\end{proof}

It turns out that the determinant map $\det: A(E,D)\to H^0(E,\cO(rD))$ is Casimir,
where we equip $A(E,D)$ with the bivector $\wt{\Pi}$ given by \eqref{Poi-3}.

\begin{prop}\label{detcasimir}
The determinant map $\det:A(E,D)\to H^0(E,\cO(rD))$ is Casimir with respect to $\wt{\Pi}$. As a consequence, its projectivization 
$M^{\reg}(E,D)\to \PP H^0(rD)$ is Casimir with respect to $\Pi$.
\end{prop}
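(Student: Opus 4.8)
The plan is to verify directly that the composition
$$
\det{}^*\Om_{H^0(rD)}\longrightarrow \Om_{A(E,D)}\xrightarrow{\ \wt\Pi\ }\cT_{A(E,D)}
$$
vanishes, using the explicit formula \eqref{Poi-3}. Fix $\phi\in A(E,D)$ and identify $T_\phi A(E,D)=H^0(\fg(D))$, $T^*_\phi A(E,D)=H^1(\fg(-D))$ (by Serre duality). The differential of $\det$ at $\phi$ is the map $T_\phi A(E,D)=H^0(\fg(D))\to H^0(\cO(rD))$ given by $\xi\mapsto \tr(\adj(\phi)\cdot\xi)$ (the linearization of the determinant), where $\adj(\phi)$ is the classical adjugate, a section of $\sHom(E(D),E)\otimes\cO((r-1)D)=\fg((r-1)D)$ characterized by $\phi\cdot\adj(\phi)=\adj(\phi)\cdot\phi=\det(\phi)\cdot\Id$. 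Dually, the conormal direction $\det{}^*\Om_{H^0(rD)}$ is spanned (after pairing with $H^0(rD)^\vee\cong H^1(\cO(-rD))$ via Serre duality) by elements of the form ``contraction with $\adj(\phi)$'', so it suffices to show that for the bivector $\wt\Pi$, every vector in its image is annihilated by all covectors coming from $\det$; concretely I would show that $\tr\bigl(\adj(\phi)\cdot\wt\Pi(\psi_\pm)\bigr)=0$ as a section of $\cO(rD)$ — or rather its class in the appropriate cohomology — for every representative $\psi_\pm$ of a class in $H^1(\fg(-D))$.

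The key computation is then the following. Plugging in \eqref{Poi-3},
$$
\adj(\phi)\cdot\wt\Pi(\psi_\pm)=-\adj(\phi)\,P_+(\pr(\phi\psi_\pm))\,\phi+\adj(\phi)\,\phi\,P_+(\pr(\psi_\pm\phi)).
$$
Taking the trace and using $\adj(\phi)\phi=\det(\phi)\Id$ together with the cyclic invariance of the trace, the second term becomes $\det(\phi)\cdot\tr(P_+(\pr(\psi_\pm\phi)))$; but $\pr$ kills the trace, so $\tr\circ\pr=0$ and $\tr(P_+(\pr(\cdot)))=0$ as well, since $P_+$ is a $\cO$-linear projection commuting with $\tr$. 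For the first term, $\tr(\adj(\phi)P_+(\pr(\phi\psi_\pm))\phi)=\tr(\phi\,\adj(\phi)\,P_+(\pr(\phi\psi_\pm)))=\det(\phi)\cdot\tr(P_+(\pr(\phi\psi_\pm)))=0$ for the same reason. Hence $\tr(\adj(\phi)\cdot\wt\Pi(\psi_\pm))=0$ identically, which is exactly the Casimir condition for $\det$ with respect to $\wt\Pi$. (One should be slightly careful that the pairing between $\Om$ and $\cT$ here involves Serre duality and Čech representatives, so the vanishing should be checked at the level of the cochain maps, but the algebraic identity above is what makes it go through; the trace pairing $H^0(\fg(D))\otimes H^1(\fg(-D))\to H^1(\cO)\cong\CC$ and its twist by $\adj(\phi)$ are compatible by functoriality of Serre duality.)

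Finally, for the projectivized statement: $\det:A(E,D)\to H^0(rD)$ is homogeneous of degree $r$, so it descends to a rational map $M(E,D)\dashrightarrow \PP H^0(rD)$, regular on $M^{\reg}(E,D)$ (where $\phi$ is injective, so $\det\phi\neq 0$). Since $\wt\Pi$ is compatible with $\Pi$ via the projection $A(E,D)\setminus\{0\}\to M(E,D)$ (as noted after \eqref{Poi-3}), and being Casimir is preserved under pushing a bivector forward along a smooth surjection onto the target of a map that factors compatibly — more precisely, the composition $\det{}^*\Om_{\PP H^0(rD)}\to\Om_{M^{\reg}}\xrightarrow{\Pi}\cT_{M^{\reg}}$ pulls back along $A^{\reg}\to M^{\reg}$ to a map dominated by $\det{}^*\Om_{H^0(rD)}\to\Om_{A^{\reg}}\xrightarrow{\wt\Pi}\cT_{A^{\reg}}$, which is zero — the Casimir property for $\Pi$ follows. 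Concretely one uses the general remark in the excerpt that a Casimir morphism factoring through a subscheme (here the cone over $\PP H^0(rD)$, or passing to the projectivization) stays Casimir. I expect the only real obstacle to be bookkeeping the duality and the $\GG_m$-equivariance correctly so that the clean algebraic identity $\tr(\adj(\phi)\,\wt\Pi(\psi))=0$ is genuinely the obstruction class that must vanish; the computation itself is short.
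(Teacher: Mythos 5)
Your proof is correct and is essentially the paper's own argument: both use Jacobi's formula $d\det_\phi(\xi)=\tr(\adj(\phi)\xi)$, the identity $\adj(\phi)\phi=\phi\adj(\phi)=\det(\phi)\Id$, cyclicity of the trace, and the fact that $P_+(\pr(\cdot))$ lands in the trace-free subsheaf $\fs$ so each resulting term vanishes. The extra remarks you add about Serre duality, \v{C}ech representatives, and the descent to the projectivization are harmless elaborations of what the paper leaves implicit.
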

\begin{proof}
 Let $A$ and $B$ be two $r \times r$ matrices. Recall that the derivative of the determinant map at $A$ along $B$ is equal to $\tr(B \cdot \adj(A))$ where $\adj(A)$ refers to the adjugate matrix of $A$. Now the assertion follows by a direct calculation:
\begin{align*}
{\det}_*\wt{\Pi}_\phi(\psi) &= \tr\Big([\phi P_+(pr(\psi\phi))- P_+(pr(\phi \psi))\phi]\cdot \adj(\phi)\Big)\\
&= \tr\Big(\adj(\phi)\phi\cdot P_+(pr(\psi\phi))\Big) - \tr\Big(P_+(pr(\phi\psi))\cdot\phi\cdot\adj(\phi)\Big)\\
&= \det(\phi)\cdot \tr\big(\phi P_+(pr(\psi\phi))\big) - \det \phi\cdot \tr\big(P_+(pr(\phi\psi))\big)= 0.
\end{align*}
\end{proof}

\begin{theorem}\label{poisson-bivector-thm}
The bivector $\wt{\Pi}$ given by \eqref{Poi-3} defines a quadratic Poisson structure on $\cA(E,D)$. 
\end{theorem}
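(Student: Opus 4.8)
The strategy is to reduce the Poisson (Jacobi) condition for $\wt\Pi$ on $A(E,D)$ to the already-known integrability of the $0$-shifted Poisson structure on $\cM$ (equivalently, on $M(E,D)$), using Proposition~\ref{gen-casimir-prop}. For this I need three inputs: first, that $\Pi$ on $M(E,D)$ is a genuine Poisson bivector; second, that $\det:A(E,D)\to H^0(C,\cO(rD))$ is Casimir with respect to $\wt\Pi$; third, that the pair of maps $(p,\det):A^{\reg}(E,D)\to M(E,D)\times H^0(C,\cO(rD))$ has injective differential at every point, where $p$ is the projectivization and $A^{\reg}$ the locus of injective $\phi$. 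Granting these, Proposition~\ref{gen-casimir-prop} (applied with $X=A^{\reg}(E,D)$, $\ov X=M^{\reg}(E,D)$, $Z=$ the image of $\det$, which is smooth on the regular locus) yields that $\wt\Pi$ is Poisson on $A^{\reg}(E,D)$; since $A^{\reg}$ is open and dense in the affine space $A(E,D)$ and $\wt\Pi$ is polynomial (quadratic), the Jacobiator is a polynomial section of $\bigwedge^3\cT$ vanishing on a dense open set, hence identically zero, so $\wt\Pi$ is Poisson on all of $A(E,D)$.

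Let me unpack the three inputs. The first, that $\Pi$ defines a Poisson structure on $M(E,D)$: this should follow from the construction in \cite{HP17} together with Theorem~\ref{HPmainthm} — the $0$-shifted Poisson structure on $\dCx(C)$ restricts to $\cM$, and by the Proposition above (``the $0$-shifted Poisson structure on $\cM$ descends to $M$'') its classical shadow is an honest Poisson bivector on the scheme $M(E,D)$; I would cite this as the content of the earlier Proposition, noting that a classical shadow of a $0$-shifted Poisson structure automatically satisfies the Jacobi identity because the homotopy Jacobiator is controlled by the higher data of $h$ and in $H^0$ there is no obstruction. The second input is exactly Proposition~\ref{detcasimir}, already proved. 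The third — injectivity of $d(p,\det)$ on $A^{\reg}$ — is a pointwise linear-algebra statement: if $B\in T_\phi A(E,D)=\Hom(E,E(D))$ lies in $\ker dp_\phi$ then $B=\lambda\phi$ for some scalar $\lambda$ (since the fibers of $p$ are the lines $\CC^*\phi$), and then $d(\det)_\phi(B)=\lambda\cdot r\cdot\det(\phi)$; for $\phi$ injective, $\det\phi\neq 0$ (as a section of $\cO(rD)$), so $\lambda=0$ and $B=0$. This is short enough to include in full.

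Then I would invoke Proposition~\ref{gen-casimir-prop} with the maps $p:A^{\reg}(E,D)\to M^{\reg}(E,D)$ and $\det:A^{\reg}(E,D)\to Z$, where $Z\subset H^0(C,\cO(rD))$ is the (smooth) image of the regular locus; both are smooth surjective onto their images, $\Pi$ and $\wt\Pi$ are compatible by construction of \eqref{Poi-2}--\eqref{Poi-3}, $\det$ is Casimir by Proposition~\ref{detcasimir}, and $(dp,d\det)$ is injective by the previous paragraph. The conclusion is that $\wt\Pi$ is Poisson on $A^{\reg}(E,D)$, and the density/polynomiality argument promotes this to all of $A(E,D)$. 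Homogeneity of $\wt\Pi$ of degree $2$ in $\phi$ is visible directly from formula \eqref{Poi-3}.

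The main obstacle is a technical one: making sure the hypotheses of Proposition~\ref{gen-casimir-prop} are met on a genuinely \emph{smooth} base $Z$ and on a smooth source — one must restrict to $A^{\reg}(E,D)$ (injective $\phi$), where $\det$ is a submersion onto a smooth locally closed subset of $H^0(C,\cO(rD))$ and $M^{\reg}(E,D)$ is smooth, rather than trying to work on all of $A(E,D)$ at once (where $\det$ degenerates and $M(E,D)$ is still fine but $p$ is not submersive onto a convenient base in the needed sense). A secondary point requiring a word of care is that smoothness of $M^{\reg}(E,D)$ and the compatibility of $\Pi$ with $\wt\Pi$ rest on the computations of Section~\ref{sec:symleaves} and on \eqref{Poi-2}; I would either forward-reference those or simply note that on $M^{\reg}$ the coarse moduli space is an open subset of $\PP\Hom(E,E(D))$ and hence smooth. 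Once the setup is in place, no further computation is needed — the whole proof is an application of Proposition~\ref{gen-casimir-prop} plus a one-line density argument.
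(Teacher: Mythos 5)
Your proposal is correct and follows essentially the same route as the paper: apply Proposition~\ref{gen-casimir-prop} with the projectivization map and the Casimir map $\det$ on a dense open subset, then extend to all of $A(E,D)$ by the polynomiality/density argument, with the injectivity of $d(p,\det)$ verified exactly as you do (the paper leaves this check implicit). The only adjustment needed is that $\det$ need not be a submersion on all of $A^{\reg}(E,D)$; as in the paper, one should shrink to the preimage of a dense open $Z'\subset Z\setminus\{0\}$ over which $\det$ is smooth and surjective (generic smoothness), which costs nothing since your final density argument is unaffected.
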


\begin{proof}
Let $Z\sub H^0(E,\cO(rD))$ be the closure of the image of the map $\det$, and let $Z'\sub Z\setminus \{0\}$ 
be a dense open subset
such that the map $\det$ is smooth and surjective over $Z'$. Thus, if $X\sub A(E,D)$ is the preimage of $Z'$
then we can apply Proposition \ref{gen-casimir-prop} to the projection $p:X\to \PP X$, the morphism
$\det:X\to Z'$ and the bivectors $\wt{\Pi}$ and $\Pi$. By Proposition \ref{detcasimir}, the morphism $\det$ is Casimir. Since
we know that $\Pi$ gives a Poisson structure on $\PP X$, we deduce that $\wt{\Pi}$ gives a Poisson structure on $X$.
Thus, the Schouten-Nijenhuis bracket $[\wt{\Pi},\wt{\Pi}]$ vanishes on a dense open subset $X\sub A(E,D)$.
Hence, it vanishes identically.
\end{proof}

\begin{remark}
Theorem \ref{poisson-bivector-thm} can also be deduced from the results of Hurtubise and Markman \cite[Sec.\ 3]{HM02}.
They also proved that the Poisson structure associated to formula \eqref{Poi-3} is induced
by the Sklyanin bracket on the loop group defined by Belavin's elliptic $r$-matrix (\cite[Theorem 3.24]{HM02}). 
\end{remark}


\begin{remark}
For any two stable vector bundles $E$ and $E^\p$ of rank $r$, there exists a line bundle $L$ and an isomorphism 
$E'\simeq E\otimes L$. This leads to a canonical identification $\sEnd(E')\simeq \sEnd(E)$ and hence,
$A(E,D)\cong A(E^\p,D)$.  Using formula \eqref{Poi-3} it is easy to see that
this isomorphism is compatible with Poisson brackets.
Therefore, up to an isomorphism, the Poisson varieties $A(E,D)$ and $M(E,D)$ depend only on the rank $r$
and the isomorphism class of the line bundle $\cO(D)$. 
\end{remark}

\begin{remark}\label{Rel-Skl-RS-Chered}
If we fix realization of the elliptic curve $C$ and a stable vector bundle $E$, one can compute the Poisson bracket associated to the bivector \eqref{Poi-3} explicitly in terms of theta functions. Fix a lattice $\Gamma=\ZZ+\ZZ\tau$ such that $C\cong \CC/\frac{1}{r}\Gamma$ and denote $\wt{C}$ for $\CC/\Gamma$. There exists a line bundle $\wt{L}$ on $\wt{C}$ and an irreducible representation $W_{r,d}$ of Heisenberg group of order $r^3$ such that $E$ is the quotient of $\wt{L}\ot W_{r,d}$ by the torsion subgroup $\frac{1}{r}\Gamma/\Gamma$. For simplicity, we consider $D=\{e\}$, being the neutral element of $C$. 
Then $A(E, nD)$ admits a canonical basis (see Section 11.1 \cite{RSTS}). Under this basis, the kernel of the projection operator $P_+$ in formula \eqref{Poi-3} turns out to be the Belavin elliptic $r$-matrix. Moreover, using Cech covering we can compute the associated bracket of \eqref{Poi-3} explicitly (similar to Section 5 \cite{HP17}). In this way, we recover the calculation of bracket of Sklyanin in the case of rank 2 and simple pole (\cite{Skl82}), Reyman and Semenov-Tian-Shansky in the case of rank 2 and arbitrary pole (\cite[Section 12]{RSTS}) and Cherednik in the general case (\cite{Ch1}). The explicit form of these brackets is used to derive their quantization, given by the  \emph{generalized Sklyanin algebras} \cite{Skl82} \cite{Ch2}. It would be interesting to find out whether our coordinate-free approach leads to a coordinate-free construction of generalized Sklyanin algebras. We leave this question for future research.
\end{remark}

\subsection{Multiplication map}

Now let us fix a pair of divisors of positive degree, $D$ and $D'$.
We have the natural multiplication map 
$$\mu: A(E,D)\times A(E,D')\to A(E,D+D'):\phi_1\otimes\phi_2\mapsto \phi_1\circ\phi_2.$$ 

\begin{prop}\label{prod-Poi}
The map $\mu$ is Poisson with respect to the Poisson structures given by \eqref{Poi-3}.
\end{prop}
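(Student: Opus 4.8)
The plan is to reduce the Poisson property of $\mu$ to the already-established Poisson property of the bivector $\wt{\Pi}$ on the factors, by exploiting the determinant maps and the Casimir property proved in Proposition \ref{detcasimir}. Concretely, I would first observe that on the open subset $A^{\reg}(E,D)\times A^{\reg}(E,D')$ of pairs of injective endomorphisms the multiplication map $\mu$ lands in $A^{\reg}(E,D+D')$, and that the determinant maps are compatible: $\det(\mu(\phi_1,\phi_2))=\det(\phi_1)\cdot\det(\phi_2)$ under the natural multiplication $H^0(\cO(rD))\times H^0(\cO(rD'))\to H^0(\cO(r(D+D')))$. Combined with Proposition \ref{detcasimir}, which says the determinant maps are Casimir on each of the three spaces, this means that the fibers of the determinant maps are Poisson subschemes and $\mu$ restricts to a map between them; it therefore suffices to check that $\mu$ is Poisson fiberwise, i.e. after restricting to a nonempty fiber $S\sub A^{\reg}(E,D)$, a fiber $S'\sub A^{\reg}(E,D')$, and the corresponding fiber in $A^{\reg}(E,D+D')$.

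Next I would pass to the projectivizations, using the quotient maps $\pi:S\to M^{\reg}(E,D)$, etc. Since the bivectors $\wt{\Pi}$ are $\GG_m$-invariant and compatible with $\Pi$ via these projections (as established before Theorem \ref{poisson-bivector-thm}), and since the fiberwise tangent maps into $T\,\PP\times T(\text{det fiber})$ are injective in the sense of Proposition \ref{gen-casimir-prop}, the statement that $\mu$ is Poisson on the fibers would follow from the statement that the induced multiplication $\mu: M^{\reg}(E,D)\times M^{\reg}(E,D')\to M^{\reg}(E,D+D')$ (the monoid structure on $M(E)$ referred to in the introduction) is Poisson with respect to $\Pi$. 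The cleanest route to the latter is then to use the explicit formula \eqref{Poi-2}, or rather its unprojectivized shadow \eqref{Poi-3}: the tangent space of $A(E,D)\times A(E,D')$ at $(\phi_1,\phi_2)$ is $A(E,D)\oplus A(E,D')$, the differential of $\mu$ sends $(\dot\phi_1,\dot\phi_2)$ to $\dot\phi_1\phi_2+\phi_1\dot\phi_2$, and dually a covector at $\mu(\phi_1,\phi_2)=\phi_1\phi_2$ pulls back appropriately; I would then compute both $\wt{\Pi}_{\phi_1\phi_2}$ evaluated on a pulled-back covector and the pushforward of $\wt{\Pi}_{\phi_1}\oplus\wt{\Pi}_{\phi_2}$ and check they agree.

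The main technical obstacle is the bookkeeping in this last computation. Formula \eqref{Poi-3} involves the projection operator $P_+$ attached to the splitting $\Gamma(U_+,\fs)\oplus\Gamma(U_-,\fs)\cong\Gamma(U_\pm,\fs)$, and one must track how $P_+$ interacts with pre- and post-composition by $\phi_1$ and $\phi_2$; the point is that the Cech cocycle $\psi_\pm$ representing a covector of $A(E,D+D')$ at $\phi_1\phi_2$ decomposes compatibly with the factorization of the divisor. I expect that the correct statement is that, after identifying $H^1(\fg(-D-D'))$-cocycles for $\phi_1\phi_2$ with pairs, the formula \eqref{Poi-3} splits as a sum of two terms, one "acting through $\phi_1$" and one "acting through $\phi_2$", precisely matching $d\mu(\wt{\Pi}_{\phi_1}\oplus\wt{\Pi}_{\phi_2})$. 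An alternative, more conceptual approach that avoids coordinates entirely would be to derive the statement from the shifted-symplectic picture: the multiplication on complexes of the form $E\to E(D)$ should fit into a Lagrangian correspondence, and the compatibility of Poisson structures with such correspondences is formal — but making this precise at the level of $\dCx(C)$ requires identifying the relevant derived stack of "composable triples" and its maps, which is itself nontrivial. Given that the paper has already developed the explicit formula \eqref{Poi-3} and the Casimir machinery, I would carry out the reduction-to-fibers argument above and finish with the direct cocycle computation.
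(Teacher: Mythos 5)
Your proposal ultimately lands on the right step --- the direct verification of $\mu_*\circ(\wt{\Pi}\oplus\wt{\Pi})\circ\mu^*=\wt{\Pi}$ using formula \eqref{Poi-3} --- but that step is the \emph{entire} proof, and you leave it at the level of ``I expect the formula splits as a sum of two terms.'' Everything before it is unnecessary and structurally circular: being a Poisson map between spaces equipped with bivectors is a pointwise identity of bivectors, $\mu_*(\wt{\Pi}_{\phi_1}\oplus\wt{\Pi}_{\phi_2})=\wt{\Pi}_{\phi_1\phi_2}$, which involves neither the Jacobi identity nor symplectic leaves; so nothing is gained by restricting to $A^{\reg}$, slicing by determinant fibers, or passing to projectivizations. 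Worse, your reduction funnels the affine statement into the projective one and then proposes to prove the projective one by computing with the unprojectivized formula \eqref{Poi-3} --- at which point you have done the affine computation anyway and the whole detour can be deleted. (The Casimir/leaf machinery of Lemma \ref{poisson-Casimir-lem} and Proposition \ref{gen-casimir-prop} serves a different purpose in the paper: propagating the \emph{Jacobi identity} between $M(E,D)$ and $A(E,D)$ in Theorem \ref{poisson-bivector-thm}.)

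The missing computation is short, and the subtlety you anticipate about $P_+$ interacting with pre- and post-composition does not arise: one never needs to commute $P_+$ past multiplication by $a=\phi_1$ or $b=\phi_2$, because the terms that would require it cancel in pairs. Writing $\phi=ab$, the tangent and cotangent maps are $\mu_*(\xi,\eta)=\xi b+a\eta$ and $\mu^*(\psi)=(b\psi,\psi a)$, so
\begin{align*}
\mu_*\bigl(\wt{\Pi}_a(b\psi),\,\wt{\Pi}_b(\psi a)\bigr)
&=\bigl[-P_+(\pr(ab\psi))a+aP_+(\pr(b\psi a))\bigr]b+a\bigl[-P_+(\pr(b\psi a))b+bP_+(\pr(\psi ab))\bigr]\\
&=-P_+(\pr(\phi\psi))\phi+\phi P_+(\pr(\psi\phi))=\wt{\Pi}_\phi(\psi),
\end{align*}
the cross terms $\pm\,aP_+(\pr(b\psi a))b$ cancelling. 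This single identity, valid at every point of $A(E,D)\times A(E,D')$ with no regularity hypothesis, is the paper's proof of Proposition \ref{prod-Poi}; the $\GG_m$-invariance then gives the projectivized statement for free, rather than the other way around.
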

\begin{proof}
Fix $a\in \cA(E,D)$ and $b\in \cA(E,D')$ and set $\phi:=\mu(a,b)\in \cA(E,D+D')$. The tangent map 
\[
\mu_*: \fg(D)\oplus \fg(D')\to \fg(D+D')
\] is $\mu_*(\phi_i,\phi_j)=\phi_ib+a\phi_j$. The cotangent map 
\[
\mu^*: \fg^\vee(-D-D')\to \fg^\vee(-D)\oplus \fg^\vee(-D')
\] is $\mu^*(\psi)=(b\psi, \psi a)$. We need to compute the composition 
\[
\xymatrix{
\fg^\vee(-D-D')\ar[r]^{\mu^*} &\fg^\vee(-D)\oplus \fg^\vee(-D')\ar[r]^{\hspace{1cm}\wt{\Pi}} &\fg(D)\oplus \fg(D')\ar[r]^{\mu_*} & 
\fg(D+D').
}
\]
Apply formula \eqref{Poi-3},
\begin{align*}
\mu_*(\wt{\Pi}(\mu^*(\psi)))&=\mu_*(\wt{\Pi}(b\psi,\psi a))\\
&=\mu_*(-P_+(\pr(ab\psi))a+aP_+(\pr(b\psi a)), -P_+(\pr(b\psi a))b+bP_+(\pr(\psi ab)))\\
&=-P_+(\pr(\phi\psi))\phi+aP_+(\pr(b\psi a))b-aP_+(\pr(b\psi a))b+\phi P_+(\pr(\psi\phi))\\
&=\wt{\Pi}(\psi)
\end{align*}
\end{proof}

Note that since the Poisson structure on each $\cA(E,D)$ is $\GG_m$-invariant, 
the product map descends to a Poisson morphism
$$M(E,D)\times M(E,D')\to M(E,D+D').$$

In the case when the divisors $D_1$ and $D_2$ are effective and $D_1\le D_2$, we can view $\sEnd(E)(D_1)$ naturally as a 
subsheaf of $\sEnd(E)(D_2)$. This gives
rise to embeddings
\[
A(E,D_1)\hookrightarrow A(E,D_2), \ \ M(E,D_1)\hookrightarrow M(E,D_2).
\] 

\begin{corollary}
For $0\le D_1\le D_2$, the embedding $\cA(E,D_1)\to \cA(E,D_2)$ (resp., $M(E,D_1)\to M(E,D_2$)
is a Poisson map.
\end{corollary}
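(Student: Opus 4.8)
The plan is to reduce the statement to the already-established Proposition \ref{prod-Poi} (that the multiplication map $\mu$ is Poisson) by factoring the embedding $\cA(E,D_1)\hookrightarrow \cA(E,D_2)$ through a multiplication map. First I would choose an effective divisor $D'$ with $D_1+D'=D_2$; such a $D'$ exists precisely because $0\le D_1\le D_2$. Next, fix a global section $s\in H^0(C,\cO(D'))$ whose divisor of zeros is $D'$; multiplication by $s$ gives the standard inclusion $\sEnd(E)(D_1)\hookrightarrow \sEnd(E)(D_2)$, i.e. the embedding $A(E,D_1)\hookrightarrow A(E,D_2)$ in the statement. On the other hand, the section $s$ is a point of $A(E,D')=\Hom(E,E(D'))$ (namely $s\cdot\id_E$), and by Proposition \ref{prod-Poi} the multiplication map $\mu:A(E,D_1)\times A(E,D')\to A(E,D_2)$ is Poisson.

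The key step is then to observe that the embedding $\iota:A(E,D_1)\hookrightarrow A(E,D_2)$ factors as
\[
\iota:\ A(E,D_1)\ \xrightarrow{\ \phi\mapsto(\phi,\, s\cdot\id_E)\ }\ A(E,D_1)\times A(E,D')\ \xrightarrow{\ \mu\ }\ A(E,D_2),
\]
where the first arrow is the inclusion of the slice $A(E,D_1)\times\{s\cdot\id_E\}$. Since $s\cdot\id_E$ is a fixed point, I would invoke the Casimir property of the determinant map (Proposition \ref{detcasimir}) to see that this slice is a Poisson subvariety: indeed $s\cdot\id_E$ lies in a single fiber of $\det:A(E,D')\to H^0(\cO(rD'))$, and on the other factor one can appeal to the fact (used in the proof of Theorem \ref{poisson-bivector-thm}) that the Poisson structure on a product of the $\cA(E,-)$'s restricts nicely to fibers of determinant maps. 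More directly, I would simply compute, using formula \eqref{Poi-3}: the bivector $\wt\Pi$ on $A(E,D_1)\times A(E,D')$ evaluated on covectors pulled back along $\iota$, i.e. with the $A(E,D')$-component of the covector set to zero and $\phi_2=s\cdot\id_E$, and check that the $A(E,D')$-component of $\wt\Pi$ vanishes while the $A(E,D_1)$-component reproduces $\wt\Pi$ on $A(E,D_1)$ up to the multiplication-by-$s$ identification. The computation is essentially the one already carried out in the proof of Proposition \ref{prod-Poi} with $b=s\cdot\id_E$, noting that $P_+(\pr(s\,\psi a))$ and similar terms behave compatibly with scaling by the central section $s$.

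Once the slice is Poisson and $\mu$ is Poisson, the composite $\iota$ is Poisson, which is the claim for $\cA$; projectivizing (the Poisson structure being $\GG_m$-invariant, exactly as in the remark following Proposition \ref{prod-Poi}) gives the statement for $M(E,D_1)\hookrightarrow M(E,D_2)$. The main obstacle I anticipate is verifying that the constant section $s\cdot\id_E$ really does cut out a Poisson slice of $A(E,D')$ under $\wt\Pi$ — equivalently, that plugging $b=s\cdot\id_E$ into the formulas does not produce spurious tangent components along $A(E,D')$. This should follow cleanly because $s\cdot\id_E$ is annihilated by $\wt\Pi$ (its determinant $s^r$ is, up to scalar, a coordinate, and by Proposition \ref{detcasimir} determinant-coordinates are Casimir, so the symplectic leaf through $s\cdot\id_E$ is a point whenever $s\cdot\id_E$ is, e.g., a scalar multiple of a section with reduced divisor — but in general one just checks the bracket vanishes), so the only surviving terms are those matching the intrinsic bracket on $A(E,D_1)$; I would spell this out by the same three-line manipulation as in the proof of Proposition \ref{prod-Poi}.
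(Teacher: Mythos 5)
Your proposal is correct and follows essentially the same route as the paper: the paper's proof also writes the embedding as multiplication by the canonical element $\Id_E\in A(E,D_2-D_1)$, invokes Proposition \ref{prod-Poi}, and observes that the bivector \eqref{Poi-3} vanishes at $\Id_E$ (for central $\phi$ the two terms in \eqref{Poi-3} cancel), which is exactly your ``no spurious tangent components'' check. The only caveat is that your Casimir-map digression is not needed (and by itself would not prove the slice is Poisson); the direct evaluation of \eqref{Poi-3} at $s\cdot\Id_E$, which you also give, is the right argument.
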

\begin{proof} This follows from Proposition \ref{prod-Poi} together with the fact that the Poisson structure on
$\cA(E,D)$ vanishes on the identity map $\Id_E$. The latter fact can be seen immediately from formula \eqref{Poi-3}.
\end{proof}

Thus, the ind-schemes
\[
A(E):= \varinjlim_{D\ge 0} A(E,D) \text{  and  } M(E):=\varinjlim_{D\ge 0} M(E,D)
\]
can be equipped with natural Poisson structures in the sense of \cite[Sec.\ 3.2]{Williams13}.

\section{Symplectic leaves}\label{sec:symleaves}
Unlike in smooth Poisson geometry where the symplectic leaves are defined as equivalent classes of points that are connected by Hamiltonian flow, in algebraic geometry we define a symplectic leaf of a Poisson scheme to be a smooth connected Poisson subscheme that is symplectic. Therefore, the existence of symplectic leaves in a Poisson scheme is not guaranteed  from the definition. The results in \cite{HP17} provide a large class of examples of Poisson schemes that admit symplectic foliations. 

In this section we will classify the symplectic leaves of $M=M(E,D)$ and study their families over the strata of a
certain parameter space. Our results can be summarized as follows.
From Theorem \ref{HPmainthm}, we get a foliation of $\cM=\cM(E,D)$ by derived symplectic leaves. In Section \ref{sec:smoneleaf}, we prove that the coarse moduli space of a derived symplectic leaf is a smooth scheme (see Theorem \ref{smoothfiber}). In Section \ref{sec:classifyleaf}, we provide a rough classification result of leaves by certain combinatorial data (partitions) (see Theorem \ref{lem_classleaf1}, Proposition  \ref{connected-prop}). 
In Section \ref{sec:deformleaf}, we study the family of leaves
of the fixed combinatorial type over a certain stratum of the symmetric product of $C$  (see Theorem \ref{fibrationleaf}).    In Section \ref{sec:multleaf}, we prove a decomposition formula for the product of two leaves under the multiplication map (see Theorem \ref{multleaf}).

Throughout this section we fix an elliptic curve $C$ and a stable bundle $E$ of rank $r$.
Everywhere except Sec.\ \ref{sec:multleaf} we also fix a divisor $D$ of degree $k>0$ on $C$.

\subsection{From a derived symplectic leaf to its coarse moduli}\label{sec:smoneleaf}
Denote $\coh(C)$ for the category of coherent sheaves on $C$. We will use the same notation for the stack of objects in this category.
Let $\cH=(\cH^0,\cH^1)\in \coh(C)\times \coh(C)$ be a pair of coherent sheaves on $C$. Because $C$ has dimension one, any object in $\D^b(\coh(C))$ is quasi-isomorphic to the direct sum of the shifts of its cohomology sheaves. Let 
$F_\cH$ denote the homotopy fiber of $q:\cM\to \RR\Perf(C)$ over the stacky point $\cH^0\oplus\cH^1[-1]$. 
By Theorem \ref{HPmainthm}, $F_\cH$ carries a 0-shifted symplectic structure, i.e. $F_\cH$ is a derived symplectic leaf of $\cM$. Note that even though $\cM$ carries trivial derived structure, the symplectic leaf $F_\cH$ has a nontrivial derived structure
(otherwise, it could not be symplectic, since it also has a nontrivial stacky structure). 
We denote the coarse moduli space of the truncation $t_0(F_\cH)$ by $F_\cH^c$. In general, the coarse moduli space of a (classical) Artin 1-stack, if it exists, is an algebraic space. 

We will constantly quote the following lemmas about stacks.

\begin{lemma}\cite[section 4]{Vez10}\label{obs}
Let $\cX$ be a derived Artin 1-stack and denote $\LL_\cX$ for its cotangent complex. Denote
\[
j: t_0(\cX)\to \cX
\] for the natural morphism from the truncation $t_0(\cX)$ to $\cX$. Then $j^*\LL_\cX$ defines an obstruction theory on $t_0(\cX)$.
\end{lemma}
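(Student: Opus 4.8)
This is \cite[Section~4]{Vez10}; let me describe the argument one would give. Write $Z:=t_0(\cX)$, which is a classical Artin $1$-stack, and recall that an obstruction theory on $Z$ is a morphism $\phi\colon E\to\LL_Z$ in the derived category of quasi-coherent complexes on $Z$ whose cone is concentrated in cohomological degrees $\le -2$ — equivalently, $h^i(\phi)$ is an isomorphism for every $i\ge 0$ and an epimorphism for $i=-1$. Concretely, such a $\phi$ governs classical deformations of $Z$: for a square-zero extension $T\hookrightarrow T'$ of affine schemes with ideal $\cI$ and a morphism $g\colon T\to Z$, the obstruction to extending $g$ over $T'$ lies in $\Ext^1_{\cO_T}(g^*E,\cI)$ (derived pullback), it vanishes precisely when an extension exists, and the extensions then form a torsor under $\Ext^0_{\cO_T}(g^*E,\cI)$, all compatibly with the tautological data attached to $\id\colon\LL_Z\to\LL_Z$. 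The natural candidate is of course $E:=j^*\LL_\cX$ together with the comparison map $\phi\colon j^*\LL_\cX\to\LL_Z$ supplied for free by the functoriality of the cotangent complex applied to $j$.

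The plan is then as follows. First I would feed $j$ into the transitivity triangle
\[
j^*\LL_\cX\longrightarrow\LL_Z\longrightarrow\LL_{Z/\cX}\xrightarrow{+1},
\]
which identifies the cone of $\phi$ with the relative cotangent complex $\LL_{Z/\cX}$. It then remains to prove that $\LL_{Z/\cX}$ is concentrated in cohomological degrees $\le -2$; the long exact cohomology sequence of the triangle converts this statement into exactly the assertion that $h^{\ge 0}(\phi)$ is an isomorphism and $h^{-1}(\phi)$ is an epimorphism, i.e.\ that $\phi$ is an obstruction theory.

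This last step is the crux and contains the only non-formal ingredient. Here one uses that $j$ is not an arbitrary closed immersion but the truncation morphism, so that smooth-locally on $\cX$ it is modelled on the projection $A\to\pi_0(A)=H^0(A)$ of a connective cdga $A$ onto its zeroth homotopy ring, which is an isomorphism on $\pi_0$. The relative cotangent complex of a morphism of connective cdga's inducing an isomorphism on $\pi_0$ is concentrated in cohomological degrees $\le -2$; one sees this by computing $h^0$ and $h^{-1}$ of $\LL_{H^0(A)/A}$ from the cofibre sequence $H^0(A)\ot_A\LL_A\to\LL_{H^0(A)}\to\LL_{H^0(A)/A}$ and the fact that $h^0$ of a cotangent complex is ordinary K\"ahler differentials. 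This connectivity estimate is part of the general formalism of \cite{TV07} (and is precisely the input used in \cite{Vez10}), so in practice I would cite it rather than reprove it; it is the main obstacle only in the sense that everything else is pure diagram chasing.

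As a sanity check — and as an alternative that avoids the transitivity triangle — I would verify the obstruction-theoretic property directly. Given a classical square-zero extension $T\hookrightarrow T'$ with ideal $\cI$ and $g\colon T\to Z$, set $\bar g:=j\circ g\colon T\to\cX$. Since $T$ and $T'$ are classical schemes, the universal property of the truncation gives bijections $\Map(T,Z)\simeq\Map(T,\cX)$ and $\Map(T',Z)\simeq\Map(T',\cX)$, so extending $g$ over $T'$ inside $Z$ is literally the same problem as extending $\bar g$ over $T'$ inside $\cX$; the deformation theory of the derived stack $\cX$, governed by $\LL_\cX$, places the obstruction in $\Ext^1_{\cO_T}(\bar g^*\LL_\cX,\cI)$ and realizes the extensions, when they exist, as a torsor under $\Ext^0_{\cO_T}(\bar g^*\LL_\cX,\cI)$. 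Since $\bar g$ factors through $j$ we have $\bar g^*\LL_\cX\cong g^*(j^*\LL_\cX)=g^*E$, and the compatibility with $\LL_Z$ is built into $\phi$; this is exactly the statement that $\phi$ is an obstruction theory on $Z$.
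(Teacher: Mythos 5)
The paper offers no proof of this lemma --- it is quoted verbatim from \cite[Section 4]{Vez10} --- so there is nothing internal to compare against; what you have written is a faithful reconstruction of Vezzosi's argument, and both halves of it are sound. The transitivity triangle $j^*\LL_\cX\to\LL_{t_0(\cX)}\to\LL_{t_0(\cX)/\cX}$ together with the connectivity estimate for the relative cotangent complex of a $\pi_0$-isomorphism of connective cdga's (checked smooth-locally on an atlas) is exactly the content of the cited note. The one point worth flagging is that the paper's working definition of ``obstruction theory'' is not the Behrend--Fantechi one you state at the outset (a map to $\LL_{t_0(\cX)}$ with $2$-connective cone) but Artin's functor-of-points notion from \cite[Def.\ 2.6]{Art74}: an assignment of obstruction classes to infinitesimal extensions whose vanishing controls lifting, with obstruction module here being $\Ext^1(g^*j^*\LL_\cX,\cI)$ (what the paper loosely calls the obstruction sheaf $H^2(j^*\LL_\cX)$). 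Your closing ``sanity check'' paragraph --- using that classical test schemes see no difference between $t_0(\cX)$ and $\cX$, so that the lifting problem is governed by $\LL_\cX$ and hence by $g^*j^*\LL_\cX$ --- is precisely the argument that delivers the statement in the form the paper actually uses (e.g.\ in Lemma \ref{smoothness}), so it should be promoted from a sanity check to the main line of the proof, with the transitivity-triangle computation serving as the equivalent Behrend--Fantechi reformulation. The only ingredient you invoke without proof that genuinely needs a citation is that every classical square-zero extension is a derived square-zero extension classified by a derivation, so that the deformation theory of the derived stack $\cX$ applies to it; this is standard and is implicitly used in \cite{Vez10} as well.
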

For the definition of obstruction theory on 1-stack (more generally, groupoid-valued functor), we refer to Definition 2.6  of \cite{Art74}. Given a groupoid valued functor $\cF$ with an obstruction theory and  an object $a\in \cF(A)$, for any infinitesimal extension $A^\p\to A$ there exists a class $o_a(A^\p)$ in certain obstruction sheaf so that $o_a(A^\p)$ vanishes if and only if  the  fibered category $\ol{\cF}_a(A^\p)$ (see Section 1 of \cite{Art74} for the definition) is non-empty.  We say $\cF$ is \emph{un-obstructed} if for every infinitesimal extension $A^\p\to A$, the induced map $\ol{\cF}(A^\p)\to \ol{\cF}(A)$ is surjective. Here $\ol{\cF}(A)$ is the set of isomorphism classes of the groupoid $\cF(A)$. And this holds if and only if the obstruction class $o_a(A^\p)$ vanishes for any $a\in \cF(A)$. Note that an obstruction theory of a groupoid-valued functor only depends on its underlying set-valued functor. In Lemma \ref{obs}, the obstruction sheaf on $t_0(\cX)$ is $H^2(j^*\LL_\cX)$.

\begin{lemma}\label{smoothness}
Let $\cG\to \cF$ be a surjective morphism of algebraic 1-stacks locally of finite type, equipped with obstruction theories.
\footnote{For an algebraic stack, there always exists an obstruction theory satisfying the conditions in Artin's representability theorem. We refer to Section 5 \cite{Art74} for details.}
Assume that $\cG$ is un-obstructed, and that the induced map on $H^0$ of the tangent complexes is surjective. Then $\cF$ is un-obstructed. In particular, $\cF$ is smooth.
\end{lemma}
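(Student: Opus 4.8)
The plan is to prove Lemma \ref{smoothness} by combining the deformation-theoretic criterion for smoothness (Artin's representability machinery) with a diagram chase on the long exact sequences attached to the obstruction theories. I would first set up the situation carefully: let $g:\cG\to\cF$ be the given surjective morphism of algebraic $1$-stacks locally of finite type, with obstruction theories $E_\cG$ and $E_\cF$ (complexes playing the role of the cotangent complex in the sense of \cite{Art74}), so that for each $\cF$ (resp.\ $\cG$) we have, at every object over an Artinian $k$-algebra $A$ and every square-zero extension $A'\to A$ with ideal $J$, the automorphisms, deformations and obstruction groups $H^i(E^\vee\ot J)$ for $i=-1,0,1$ respectively. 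The statement ``$\cG$ is un-obstructed'' means precisely that the obstruction class $o_a(A')\in H^1$ always vanishes (equivalently, $\ol{\cG}(A')\to\ol{\cG}(A)$ is surjective for every square-zero, hence every infinitesimal, extension).

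The key step is functoriality of the obstruction classes with respect to $g$. Given an object $b\in\cF(A)$, choose (using surjectivity of $g$ on field-valued and, more generally, Artinian-valued points, together with the surjectivity on $H^0$ of tangent complexes to lift infinitesimally) an object $a\in\cG(A)$ with $g(a)\cong b$; here one should be slightly careful and argue that surjectivity of $g$ as a morphism of stacks plus smoothness-type lifting on tangent vectors lets us lift $b$ to some $a$ over the \emph{same} $A$ — this is where the hypothesis ``surjective morphism'' is used in its strong stacky form. Then the morphism of obstruction theories induced by $g$ gives a commutative square sending $o_a(A')\mapsto o_b(A')$ under the natural map $H^1(E_\cG^\vee\ot J)\to H^1(E_\cF^\vee\ot J)$. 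Since $\cG$ is un-obstructed, $o_a(A')=0$, hence $o_b(A')=0$. As $b$ was arbitrary, $\cF$ is un-obstructed.

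Finally, un-obstructedness plus the remaining Artin conditions (already satisfied because $\cF$ is an algebraic stack locally of finite type, so it has a well-behaved cotangent complex / obstruction theory, its deformation and automorphism functors are finite-dimensional, etc.) yields smoothness: a locally finitely presented algebraic stack whose associated deformation problem is un-obstructed is formally smooth, and formally smooth plus locally of finite presentation is smooth. I would invoke \cite[Section 5]{Art74} for the precise packaging of ``un-obstructed $+$ algebraic $\Rightarrow$ smooth''; concretely, the infinitesimal lifting criterion for smoothness of $\cF$ over $k$ is exactly the surjectivity of $\ol{\cF}(A')\to\ol{\cF}(A)$ for all square-zero extensions, which is what we have just established.

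The main obstacle I anticipate is the lifting step in the middle paragraph: one must genuinely produce, over a \emph{fixed} Artinian base $A$, an object $a\in\cG(A)$ mapping to a given $b\in\cF(A)$, rather than merely an object over some extension or over the residue field. This requires combining (i) surjectivity of $g$ to get a lift over the residue field, and (ii) an inductive argument up the Artin filtration of $A$, where at each step the obstruction to lifting $a$ along $g$ lives in an $H^1$ that is controlled by the fact that $\cG$ is un-obstructed and the map on $H^0$ of tangent complexes is surjective (so the relative obstruction vanishes). Making this induction precise — i.e.\ checking that ``$\cG$ un-obstructed'' and ``$H^0$-surjective'' together force $g$ to be smooth-surjective on Artinian points, so that lifts over fixed $A$ exist — is the technical heart of the argument; everything else is formal diagram chasing with the long exact sequences of the obstruction theories.
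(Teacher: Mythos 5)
Your proposal follows essentially the same route as the paper: reduce to infinitesimal extensions of local Artinian rings, lift a given object of $\cF(A)$ through $\cG$ by induction on the length of $A$, and then use un-obstructedness of $\cG$ to produce the required lift over $A'$. Two remarks on the soft spots. First, the appeal to functoriality of obstruction classes ($o_a(A')\mapsto o_b(A')$ under a ``morphism of obstruction theories induced by $g$'') is both unnecessary and not quite available: the hypotheses equip $\cG$ and $\cF$ with obstruction theories separately, with no stated compatibility under $g$. You can bypass this entirely, as the paper does, because un-obstructedness is equivalent to surjectivity of $\ol{\cF}(A')\to\ol{\cF}(A)$ for all infinitesimal extensions; once you have $a\in\cG(A)$ over $b$ and a lift $a'\in\cG(A')$ of $a$, its image in $\cF(A')$ is the desired lift of $b$, with no obstruction classes in sight. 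Second, the inductive lifting step that you rightly flag as the technical heart is carried out in the paper via a Sublemma using mini-versality: Schlessinger's conditions hold by Artin's representability theorem, so the tangent space $F(k[\ep]/(\ep^2))$ acts transitively on the set of lifts of a fixed $\eta\in F(A)$ to $F(A')$; one lifts $a$ arbitrarily to some $a''\in G(A')$ using un-obstructedness of $G$, observes that $g(a'')$ and the target $b'$ differ by a tangent vector $v_F$, and lifts $v_F$ to $v_G$ using surjectivity on $H^0$ of tangent complexes to correct $a''$. Your alternative phrasing of this step --- that ``the relative obstruction vanishes'' so that $g$ becomes smooth-surjective on Artinian points --- is the one genuinely shaky point: surjectivity on $H^0$ alone does not kill the relative obstruction group (the long exact sequence would also require injectivity on $H^1$ of the tangent complexes), so $g$ need not be formally smooth. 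What makes the induction close is the combination of the \emph{absolute} un-obstructedness of $\cG$ with the transitive tangent-space action on lifts, exactly as in the paper's Sublemma; with that substitution your argument is correct and coincides with the paper's.
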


The proof will be based on the following standard result in deformation theory (for completeness, we give a proof).
\paragraph{Sublemma}(c.f. Ex. 15.8 of \cite{HarDef})
Let $G\to F$ be a morphism of set-valued functors of local Artin rings. Assume that $G$ and $F$ admit mini-versal families,  $G$ is un-obstructed, and that the tangent map is surjective. Then $G\to F$ is strongly surjective and $F$ is un-obstructed. 

\begin{proof}
Denote $\cC$ for the category of local Artin $k$-algebras with residue field $k$. Given $A\in \cC$, we first show that $G(A)\to F(A)$ is surjective. Denote $D$ for the $k$-algebra of dual number.
The sublemma can be proved by induction on the dimension of $A$. By versality, surjectivity holds when $A=k$. Suppose the surjectivity is proved for $A$. Let $A^\p\to A$ be a small extension.  Consider the commutative diagram
\begin{equation}\label{strongsurj}
\xymatrix{
G(A)\ar[r] & F(A)\\
G(A^\p)\ar[u]^q\ar[r]^{f^\p} & F(A^\p)\ar[u]^p 
}
\end{equation}
By the un-obstructedness of $G$, $q$ is surjective. Fix  $\eta^\p\in F(A^\p)$ and define $\eta:=p(\eta^\p)$. By the induction assumption and the surjectivity of $q$, there exists $\xi^\pp\in G(A^\p)$ with image $\eta^\pp\in F(A^\p)$ so that $p(\eta^\pp)=\eta$. By mini-versality (Proposition 16.1 \cite{HarDef}), the action of $F(D)$ on $p^{-1}(\eta)$ is transitive. Let $v_F\in F(D)$ be an element so that 
$v_F(\eta^\pp)=\eta^\p$.
Because $G(D)\to F(D)$ is surjective, we may choose a pre-image $v_G\in G(D)$ so that 
$f^\p(v_G(\xi^\pp))=\eta^\p$.
This proves the surjectivity of $G(A^\p)\to F(A^\p)$.

Given a small extension $B\to A$, we need to show the morphism $h$ in the diagram
\[
\xymatrix{
G(B)\ar@/^1.0pc/[rrd]^{f^\p}\ar@/_2.0pc/[rdd]^{p^\p}\ar@{-->}[rd]^{h} & & \\
& G(A)\times_{F(A)}F(B)\ar[r]\ar[d] & F(B)\ar[d]^p\\
&G(A)\ar[r]^f & F(A)
}
\] is surjective. Fix $\xi_A\in G(A),\eta_B\in F(B), \eta_A\in F(A)$ so that $f(\xi_A)=p(\eta_B)=\eta_A$. By surjectivity of $p^\p$ and $f^\p$, there exists $\xi_B,\xi^\p_B\in G(B)$ so that $f^\p(\xi_B)=\eta_B, p^\p(\xi^\p_B)=\xi_A$. Because both $f^\p(\xi_B)$ and $f^\p(\xi^\p_B)$ belong to $p^{-1}(\eta_A)$, there exists $v_F\in F(D)$ such that $v_F(f^\p(\xi_B))=f^\p(\xi^\p_B)$ by mini-versality of $F$. Because $G(D)\to F(D)$ is surjective, there exists a lift $v_G\in G(D)$ of $v_F$ so that $v_G(\xi_B)=\xi^\p_B$. The surjectivity of $h$ is proved by induction.
The un-obstructedness of $F$ follows from diagram \ref{strongsurj}.
\end{proof}

\begin{proof}[Proof of Lemma \ref{smoothness}]
It is enough to check the un-obstructedness of $\cF$ for infinitesimal extensions of residue fields of points (see Proposition 4.2 of \cite{Art74}). Thus, we can fix a point of $\cF$ and a point of $\cG$ over it, and restrict our functors
to the category $\cC$ of local Artin rings. 
Let us denote by $G$ and $F$ the underlying set-valued functors. By Artin's representability theorem (Theorem 5.3 \cite{Art74}),  the functors $F$ and $G$ satisfy  the Schlessinger's criterion (c.f. condition (S1), (S2) of Section 2 \cite{Art74}), therefore admit mini-versal families. Hence, the sublemma implies the un-obstructedness of $\cF$.
\end{proof}

The next lemma is an easy exercise. We omit the proof.
\begin{lemma}\label{gerbe}
Let $\cX$ be a (un-derived) algebraic 1-stack. Suppose $\cX$ is a $G$-gerbe over an algebraic space $X$ for some affine algebraic group $G$.
Then $X$ is isomorphic to the coarse moduli space of $\cX$. Moreover, $\cX$ is smooth if and only if $X$ is smooth.
\end{lemma}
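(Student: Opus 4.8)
The plan is to first identify the gerbe projection $\pi:\cX\to X$ with the coarse moduli map of $\cX$, and then to deduce the smoothness equivalence from the fact that $\pi$ is a smooth surjective morphism. Throughout we use that, since we work over $\CC$, the affine algebraic group $G$ is smooth, so that $BG$ is a smooth algebraic stack and $\pi$ is, \'etale-locally on $X$, of the form $U\times BG\to U$ with $U\to X$ \'etale; in particular $\pi$ is smooth, surjective, and locally of finite presentation.

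To see that $X$ is the coarse moduli space of $\cX$, I would check the defining universal property together with bijectivity on geometric points. For universality, let $f:\cX\to Y$ be a morphism to an algebraic space $Y$. Since $\pi$ is an fppf epimorphism and $Y$ is a sheaf, descent of morphisms along $\pi$ shows that $f$ factors (uniquely) through $X$ if and only if $f\circ p_1=f\circ p_2$, where $p_1,p_2:\cX\times_X\cX\to\cX$ are the two projections. Now the gerbe condition says precisely that the diagonal $\Delta:\cX\to\cX\times_X\cX$ is an fppf epimorphism; since $p_1\circ\Delta=p_2\circ\Delta=\id_\cX$, composing with $f$ gives $f\circ p_1\circ\Delta=f\circ p_2\circ\Delta$, and surjectivity of $\Delta$ forces $f\circ p_1=f\circ p_2$, as needed. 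For bijectivity on points, the fibre of $\pi$ over a geometric point of $X$ is a gerbe over a point and hence has a single isomorphism class of objects, so $\cX(\bar k)/{\cong}\to X(\bar k)$ is bijective (and one checks similarly that $\pi$ is a homeomorphism of underlying topological spaces, and that $\pi_*\cO_\cX\cong\cO_X$ using $H^0(BG,\cO_{BG})=k$). Together these identify $X$ with the coarse moduli space of $\cX$.

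For the last assertion, note first that since $\cX$ is an algebraic $1$-stack locally of finite type over $\CC$ and $\pi$ is smooth, surjective and of finite presentation, $X$ is likewise locally of finite type over $\CC$. If $X$ is smooth over $\CC$, then $\cX\to X\to\operatorname{Spec}\CC$ is a composition of smooth morphisms, hence $\cX$ is smooth over $\CC$. Conversely, if $\cX$ is smooth over $\CC$ then $\cX$ is regular; passing to a smooth scheme cover and using that $\pi$ is faithfully flat with regular (indeed smooth) fibres, regularity descends to $X$, which is therefore smooth over $\CC$ since it is locally of finite type.

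I expect no real obstacle here: the only slightly delicate point is the universality statement in the middle paragraph, and the argument via surjectivity of the diagonal $\Delta:\cX\to\cX\times_X\cX$ makes it immediate. The rest is an assembly of standard facts about gerbes and about descent of smoothness and regularity along faithfully flat morphisms.
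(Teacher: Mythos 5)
The paper deliberately omits a proof of this lemma (``The next lemma is an easy exercise. We omit the proof.''), so there is nothing to compare against; your argument is a correct and complete write-up of the standard one. Both halves check out: the universal property follows from the fact that the two gerbe axioms say exactly that $\pi$ and the diagonal $\Delta:\cX\to\cX\times_X\cX$ are fppf epimorphisms (so any map to a sheaf coequalizes the two projections and descends uniquely), and the smoothness equivalence follows since $\pi$ is smooth, surjective and locally of finite presentation (using that $G$ is smooth in characteristic $0$), with the converse direction handled by faithfully flat descent of regularity from a smooth scheme cover of $\cX$ down to $X$.
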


The following is the main result of this section.

\begin{theorem}\label{smoothfiber}
Let $(\cH^0,\cH^1)$ be a pair of coherent sheaves on $C$ such that $\det(\cH^1)\otimes \det(\cH^0)^{-1}\cong \cO(rD)$. For $\cH:=\cH^0\oplus\cH^1[-1]$ in $\RR\Perf(C)$ such that $F_\cH$ is non-empty, the coarse moduli scheme $F_\cH^c$ is a smooth symplectic scheme. Denote $h^i_{jk}$ for the dimension of $\Ext^i(\cH^j,\cH^k)$ for $i,j,k=0,1$. Then the dimension of $F^c_\cH$ is equal to $r^2 k-h^0_{00}-h^0_{11}-h^0_{01}$.
\end{theorem}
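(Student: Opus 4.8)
The plan is to realize $F_\cH^c$ as the coarse moduli space of the truncation $t_0(F_\cH)$, where $F_\cH$ is a $0$-shifted symplectic derived stack, and to deduce smoothness from the interaction between the derived structure and the stacky structure. First I would analyze $F_\cH$ as a derived Artin $1$-stack: by Theorem \ref{HPmainthm} it carries a $0$-shifted symplectic structure, so its tangent complex $\TT_{F_\cH}$ is self-dual up to a shift, $\TT_{F_\cH}\simeq \LL_{F_\cH}$. Since $\cM$ sits in amplitude $[-1,1]$ (stacky direction in degree $-1$, derived direction in degree $1$), the tangent complex of $F_\cH$ at a point $\phi$ with $q(\phi)=\cH$ is the hypercohomology of a short complex; its degree $-1$ part is $\Aut(\cH^0)\oplus\Aut(\cH^1)$-type data and its degree $1$ part is governed by $\Ext^1$-groups. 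The self-duality forces the obstruction space $H^2(j^*\LL_{F_\cH})$ to be dual to $H^{-1}$ of the tangent complex. I would then apply Lemma \ref{smoothness} with $\cG$ a suitable atlas or a $\GG_m$-gerbe presentation: concretely, $t_0(F_\cH)$ is (near each point) a gerbe for the automorphism group, and one uses that the map from an obstruction-free chart surjects on $H^0$ of tangent complexes. This gives smoothness of $t_0(F_\cH)$, hence of $F_\cH^c$ by Lemma \ref{gerbe}.

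For the symplectic structure on $F_\cH^c$: the $0$-shifted symplectic form on $F_\cH$ has a classical shadow, a closed $2$-form on $t_0(F_\cH)$; since $t_0(F_\cH)$ is a gerbe over $F_\cH^c$ with affine (here $\GG_m$-type) band, one argues exactly as in the Proposition following Theorem \ref{HPmainthm} (the descent argument for the Poisson structure on $\cM$): pushforward of $\cO$ along the gerbe map is $\cO$, so the $2$-form descends, and non-degeneracy on $F_\cH^c$ follows from non-degeneracy of the $0$-shifted form after quotienting out the stacky directions, which precisely kill the kernel. I would check non-degeneracy by noting that the tangent space of $F_\cH^c$ at a point is $H^0$ of the tangent complex modulo the image of the stacky part, and the pairing induced by the symplectic form is perfect on this quotient because of the self-duality of $\TT_{F_\cH}$ combined with the long exact sequences \eqref{les1}, \eqref{les2}.

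For the dimension formula: I would compute the Euler characteristic of the tangent complex $\TT_\phi$ of $F_\cH$ at $\phi$. From the description of $\cM$ as triples $E\to E(D)$, the tangent complex of $\cM$ at $\phi$ is $\RR\Gamma$ of $[\sEnd(E)\oplus\sEnd(E)\to\sEnd(E)(D)]$ shifted by $1$; taking the homotopy fiber over $\cH=q(\phi)$ removes the $\RR\Perf$-directions, i.e. one passes to the fiber of $q_*$. The relevant complex for $F_\cH$ has the shape controlled by $\Hom$ and $\Ext^1$ between $\cH^0$, $\cH^1$, and its tangent space at a smooth point has dimension $\chi$ of an appropriate complex. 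Using $\chi(\sEnd(E)(D))=r^2 k$ (by Riemann--Roch, $\deg\sEnd(E)=0$, $\mathrm{rk}=r^2$, $\deg D=k$, genus $1$), and subtracting the dimensions $h^0_{00}=\dim\Hom(\cH^0,\cH^0)$, $h^0_{11}=\dim\Hom(\cH^1,\cH^1)$, $h^0_{01}=\dim\Hom(\cH^0,\cH^1)$ coming from the stacky automorphisms and from fixing $\cH$ in $\RR\Perf$, one arrives at $\dim F_\cH^c = r^2 k - h^0_{00}-h^0_{11}-h^0_{01}$. Here the self-duality (so that $\Ext^1$-contributions pair up with $\Hom$-contributions) is what makes only the $h^0$ terms survive; I would make this bookkeeping precise via the two long exact sequences.

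The main obstacle I anticipate is the smoothness argument, specifically verifying the hypotheses of Lemma \ref{smoothness}: one must exhibit an un-obstructed stack $\cG$ mapping onto $F_\cH$ (or onto $t_0(F_\cH)$) with the tangent map surjective on $H^0$. The natural candidate uses that a $0$-shifted symplectic structure on $F_\cH$ makes the obstruction theory "self-dual", but turning this into actual un-obstructedness of the truncation requires care—one cannot simply invoke formal smoothness of the derived stack. The cleanest route is probably to use the explicit presentation of $\cM$ (it is literally a $\GG_m$-gerbe over a projective space), restrict to the locus defined by fixing $q(\phi)=\cH$, and check that the defining equations cut out a smooth subscheme by a tangent-obstruction comparison; the symplectic structure then identifies the obstruction space with the dual of a piece of the tangent space that is visibly killed in $F_\cH^c$.
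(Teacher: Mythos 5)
Your outline of the descent of the symplectic form and the dimension count via the long exact sequence of tangent complexes (together with Serre duality $h^0_{jk}=h^1_{kj}$ and $\dim\cM=r^2k-1$) is essentially the paper's argument and is fine. The genuine gap is exactly at the point you flag, and your proposed resolution would not work. The obstruction space of $t_0(F_\cH)$ at a point $P$ is $H^1(\TT_{F_\cH}|_P)$, which by the self-duality you invoke is dual to $H^{-1}(\TT_{F_\cH}|_P)\simeq\CC$ coming from the $\GG_m$-gerbe; so it is one-dimensional, \emph{not} zero. Passing to the coarse moduli space does not make this go away: for a $\GG_m$-gerbe the deformation functor of isomorphism classes (which is what Artin's obstruction theory sees) is the same as that of the coarse space, so the obstruction space is not ``killed in $F_\cH^c$''. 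What must be shown is that the actual obstruction \emph{classes} in this nonzero space vanish, and self-duality alone does not give that.

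The paper's key idea, which is absent from your proposal, is to identify where that one extra dimension of $H^1(\TT_{F_\cH})$ comes from: the cokernel of $H^0(\TT_\cM)\to \Ext^1(\cH,\cH)$ is precisely the determinant direction $H^1(\cO_C)$, because a first-order deformation of $\cH$ lifts to a deformation of $\phi$ exactly when it preserves $\det(\cH)\cong\cO(rD)$. One therefore replaces $\RR\Perf(C)$ by the fixed-determinant stack $\RR\Perf^{rD}(C)$ (through which $q$ factors) and forms the homotopy fiber $G_\cH$ over the same stacky point there. For $G_\cH$ the tangent map is surjective in degree $0$, so $H^1(\TT_{G_\cH})=0$ and $t_0(G_\cH)$ is un-obstructed; the natural map $G_\cH\to F_\cH$ is a bijection on points and an isomorphism on $H^{-1}$ and $H^0$ of tangent complexes, so Lemma \ref{smoothness} applies and gives smoothness of $t_0(F_\cH)$. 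Without this step (or an equivalent direct verification that the obstruction classes land in the determinant direction and vanish), the smoothness claim is unproved. You also pass rather quickly over the existence of $F_\cH^c$ as a \emph{scheme}; the paper needs a separate argument (showing $t_0(F_\cH)\to\cM$ is a monomorphism of gerbes, then quasi-finiteness and Stein factorization), which your proposal should at least acknowledge.
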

\begin{proof}
Denote the stacky point in $\RR\Perf(C)$ corresponding to $\cH^0\oplus\cH^1[-1]$ by $x_\cH$. Recall that $F_\cH$ is defined to be the homotopy fiber product 
\begin{equation}\label{fiberdig}
\xymatrix{
F_\cH\ar[d]\ar[r]^j&\cM\ar[d]^q\\
x_\cH\ar[r]^i &  \RR\Perf(C)
}
\end{equation}
Let $P$ be a point in $F_\cH$ that corresponds to a triple $\phi: E\to E(D)$,
so that $\cH^0\oplus\cH^1[-1]$ is quasi-isomorphic to the complex $[E\rTo{\phi} E(D)]$. 
The tangent complex to $\RR\Perf(C)$ at this complex, has non-vanishing cohomology only in degrees 
$-1$ and $0$, because $\Ext^1(E,E(D))=0$.
On the other hand, $i$ induces an isomorphism on $H^{-1}$ of the tangent complexes.
Thus, from the long exact sequence of tangent complexes associated to the homotopy fiber product,  we deduce an
isomorphism
\[
H^{-1}(\TT_{F_\cH}|_P)\simeq H^{-1}((j^*\TT_{\cM})|_P)\simeq\CC
\] (the latter identification is due to the fact that $\cM$ is a $\GG_m$-gerbe), and a long exact sequence
\begin{equation}\label{les_fh}
\xymatrix{
0\ar[r] & H^0(\TT_{F_\cH}|_P)\ar[r] & H^0((j^*\TT_{\cM})|_P)\ar[r]^q & H^0((q\circ j)^*\TT_{\RR\Perf(C)}|_P)\\
\ar[r] & H^1(\TT_{F_\cH}|_P)\ar[r] &0
}
\end{equation}
The nondegeneracy of the shifted symplectic form on $F_\cH$ 
implies that $H^{1}(\TT_{F_\cH}|_P)\simeq\CC$, thus, the image of $q$ has codimension $1$.

Let us consider the determinant map $\det:\RR\Perf(C)\to Pic(C)$ to the Picard stack of $C$. Then the composition
$\det\circ q$ factors through the stacky point of $Pic(C)$ corresponding to $\cO(rD)$.
Since $\det$ induces a surjection on tangent spaces, it follows that the image of $q$ can be identified
with the subspace consisting of first order deformations that preserve the determinant (up to isomorphism).

Let $\RR\Perf^{rD}(C)$ denote the derived moduli space of perfect complexes on $C$ with the determinant isomorphic to $\cO(rD)$. As we observed above, $q$ factors through $\RR\Perf^{rD}(C)$. Let us denote by $G_\cH$ the homotopy fiber product 
\begin{equation}\label{fiberdig1}
\xymatrix{
G_\cH\ar[r]\ar[d]& \cM\ar[d]^q\\
x_\cH\ar[r] &  \RR\Perf^{rD}(C)
}
\end{equation}
From the previous observations we get that the tangent morphism $H^0(\TT_{\cM})\to H^0(q^*\TT_{\RR\Perf^{rD}(C)})$ is surjective. Therefore, $H^1(\TT_{G_\cH})$ vanishes.
The cotangent complex $\LL_{G_{\cH}}$ has non-zero cohomology groups concentrating in degree $0,1$. By Lemma \ref{obs}, $t_0(G_\cH)$ is un-obstructed. 

There exists a morphism $f:G_\cH\to F_\cH$ because diagram \ref{fiberdig} is Cartesian. Note that $f$ is a bijection on the sets of points. And $f$ induces isomorphisms
\[
H^{-1}(\TT_{G_\cH})\cong H^{-1}(f^*\TT_{F_\cH}),~~~~H^{0}(\TT_{G_\cH})\cong H^{0}(f^*\TT_{F_\cH}).
\] 
The morphism $f$ induces a morphism of classical stacks $t_0(G_\cH)\to t_0(F_\cH)$. Moreover, the induced morphism on $H^0$ of the tangent complexes from $t_0(G_\cH)$ to $t_0(F_\cH)$ is an isomorphism. By Lemma \ref{smoothness}, $t_0(F_\cH)$ is a smooth stack. 

We claim that $t_0(F_\cH)$ admits a coarse moduli space $F_\cH^c$. This can be proved as follows. Denote $\cB$ for the open substack of $\RR\Perf(C)$ consisting of objects $E$ of Tor amplitude $[-1,0]$ and satisfy the condition $\Ext^{<0}(E,E)=0$. Clearly, $q$ factors through $\cB$. So we may identify $F_\cH$ with the homotopy fiber product $x_\cH\times_{\cH,\cB, q}\cM$. Lieblich proved that $t_0(\cB)$ is an Artin stack (\cite{Lie05}). 
Because $t_0$ functor commutes with (homotopy) fiber product (see Section 2.2.4 \cite{HAGII}), $t_0(F_\cH)$ is equivalent to the fiber product of Artin stacks 
$$x_\cH\times_{t_0(\cB)} \cM$$
where $x_\cH$ is equivalent with $B\Aut(\cH)$, the classifying stack of the automorphism group of $\cH=\cH^{-1}[1]\oplus\cH^0$ as an object in $\D(\coh(C))$. This is a group because of the condition $\Hom(E(D),E)=0$.

It suffices to show that $t_0(F_\cH)$ is a gerbe since any gerbe admits a coarse moduli space. Recall that an algebraic stack $\cX$ is a gerbe if and only if the natural map from the inertia stack $I_\cX\to\cX$ is flat and locally of finite presentation (Proposition 91.27.9 \cite{St}). Because $\cM$ is a $\GG_m$-gerbe, the morphism $I_\cM\to\cM$ is flat and locally of finite presentation. By Lemma 91.5.6 \cite{St}, the diagram 
\[
\xymatrix{I_{t_0(F_\cH)}\ar[r]\ar[d] & I_\cM\ar[d]\\
t_0(F_\cH)\ar[r] &\cM}
\] is Cartesian if $t_0(F_\cH)\to \cM$ is a monomorphism. Recall that a representable morphism of stacks $\cX\to \cY$ is a monomorphism if $\cX\simeq\cX\times_\cY\cX$. Since monomorphism is stable under base change, it suffices to check $B\Aut(\cH)\to t_0(\cB)$ is a monomorphism. From the definition of stacky point, this is clearly a monomorphism of fibred categories. So we just need to show it is represented by algebraic spaces, which is a consequence of the fact that $t_0(\cB)$ is an algebraic stack (see \cite{Lie05}). The existence of coarse moduli space for $t_0(F_\cH)$ is then proved.

We claim that $F_\cH^c$ is a scheme. Denote the coarse moduli of $\cM$ by $M$.
Since $M$ is a scheme, we need to show $F_\cH^c\to M$ is representable. Because $t_0(F_\cH)\to\cM$ is a monomorphism of $\GG_m$-gerbes, the induced morphism $F_\cH^c\to M$ on coarse moduli spaces is a monomorphism of algebraic spaces. By \cite[0463, Lemma 27.10]{St}, a monomorphism of algebraic spaces locally of finite type is quasi-finite. Then by Stein factorization theorem of algebraic spaces (Theorem 7.2.10 \cite{Ols}), $F_\cH^c\to M$ is quasi-affine. In particular, it is representable.

Because $t_0(F_\cH)$ is a $\GG_m$-gerbe over its coarse moduli space, $F_\cH^c$ is a smooth scheme by Lemma \ref{gerbe}. 
The 0-shifted symplectic structure on $F_\cH$ descends to $F_\cH^c$ via the isomorphism $H^0(\TT_{F_\cH})\cong TF^c_\cH$.

Finally, we compute the dimension of $F_\cH^c$. The dimension of $\cM$ is equal to $r^2 k-1$. By  Serre duality, $h^0_{jk}=h^1_{kj}$. The long exact sequence \ref{les_fh} implies that the dimension of $F^c_\cH$ is equal to 
\[
r^2 k-h^0_{00}-h^0_{11}-h^0_{01}.
\]
\end{proof}

\subsection{Non-emptiness and connectedness of $F^c_\cH$}\label{sec:classifyleaf}

Since the spaces $F^c_\cH$ are defined as fiber products they can turn out to be empty.
In this section, focusing on the case when $\cH$ is of the form $(0,T)$, we determine for which torsion sheaves $T$
the space $F^c_T:=F^c_{(0,T)}$ is nonempty. We also show that these spaces $F^c_T$ are connected.

We will use the following standard notation for partitions. If $\lambda=(\lambda_1\geq \lambda_2\ldots)$ is a partition then $l(\lambda)$ is its length and $|\lambda |$ is the number of boxes in the Young diagram associated with $\lambda$. 


For every torsion sheaf $T$ of length $N$ supported on a set of distinct points $\{x_1,\ldots,x_n\}$, we associate to it a collection of partitions $\Lambda(T)=\{\lambda(x_1),\lambda(x_2),\ldots,\lambda(x_n)\}$, such that
\begin{align}\label{decompT}
T=\bigoplus_{i=1}^n \left(\bigoplus_{j=1}^{l(\lambda(x_i))} \cO_{\lambda(x_i)_j x_i}\right)
\end{align}
where
$\sum_{i=1}^n |\lambda(x_i)|=N$. Equivalently, we can rewrite this using multiplicities as
\begin{align}\label{dualdecompT}
T=\bigoplus_{i=1}^n \left(\bigoplus_{j\geq 1} \cO_{j x_i}^{\oplus m_j(\lambda(x_i))}\right).
\end{align}

Define a positive integer $l_{\max}(T)$ associated to $T$ by
\[
l_{\max}(T):=\max\{l(\lambda(x_i))|i=1,\ldots,n\}.
\]
When $T$ is supported at a single point, $l_{\max}(T)$ is simply the length of the corresponding partition.

The following lemma is well known to experts. 
\begin{prop}(\cite[Theorem 1.2]{Oda})\label{lem_pushfor}
Let $C$ be an elliptic curve and $\pi : C^\prime\to C$
be an \'etale covering of degree $r$.  If $E$ is a stable vector bundle on $C$ of rank $r$ and degree $d$, then there exists a line bundle $L^\p\in \Pic^d(C^\prime)$ such that $E\cong \pi_*(L^\p)$. Conversely, if $gcd(r, d)=1$, then for any $L^\p \in \Pic^d(C^\prime)$ the vector bundle $E:=\pi_*(L^\p)$ is stable of rank $r$ and degree $d$.
\end{prop}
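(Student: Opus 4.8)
The plan is to prove Proposition~\ref{lem_pushfor} (Atiyah--Oda's description of stable bundles on elliptic curves via pushforward from isogenous curves) by invoking the classical structure theory of vector bundles on an elliptic curve together with a degree/stability bookkeeping argument. First I would reduce to the indecomposable case: a stable bundle is in particular indecomposable, so by Atiyah's classification the bundle $E$ of rank $r$ and degree $d$ with $h:=\gcd(r,d)$ is, up to twisting by a line bundle, of the form $F_h\otimes(\text{bundle of rank }r/h,\deg 0)$ where $F_h$ is the Atiyah bundle; the stable ones are precisely those with $h=1$, i.e.\ the ``simple'' Atiyah bundles. So the real content is: for $\gcd(r,d)=1$, every stable $E$ of rank $r$, degree $d$ is $\pi_*L'$ for a suitable degree-$r$ isogeny $\pi:C'\to C$ and $L'\in\Pic^d(C')$.

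For the direct statement, I would proceed as follows. Since $\gcd(r,d)=1$, the moduli space $M_C(r,d)$ of stable bundles is a single point mod twisting --- more precisely it is isomorphic to $C$ itself via the determinant, and every stable bundle is obtained from a fixed one by tensoring with a line bundle of degree $0$. Fix any degree-$r$ isogeny $\pi:C'\to C$ (these are classified by index-$r$ subgroups of the torsion, all isomorphic to $\ZZ/r$ or products, and exist in abundance over $\CC$). For $L'\in\Pic^d(C')$ generic, $\pi_*L'$ has rank $r$ and degree $d$ (the latter by Riemann--Roch / Grothendieck--Riemann--Roch on the elliptic curve: $\deg\pi_*L' = \deg L' + \deg(\pi_*\cO_{C'})\cdot 1 = d$ using that $\pi$ is \'etale so $\pi_*\cO_{C'}$ has degree $0$ and rank $r$, and $\chi$ is additive). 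Then I claim $\pi_*L'$ is stable when $\gcd(r,d)=1$: any subsheaf destabilizing it would, after adjunction $\Hom(\cF,\pi_*L')=\Hom(\pi^*\cF,L')$ and using that $L'$ is a line bundle on $C'$, force a rational slope inequality that is incompatible with $\gcd(r,d)=1$ --- indeed a subbundle of rank $r'<r$ and slope $\geq d/r$ would pull back to a subsheaf of the line bundle $L'$, whose degree is bounded, and one derives $r'\mid$ something forcing $d/r$ to be realizable with denominator $<r$, a contradiction. This is the step I would do carefully, but it is standard slope-stability juggling; citing \cite{Oda} is legitimate since the statement is exactly Theorem~1.2 there.

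For the converse direction (existence of $\pi$ and $L'$ for a given stable $E$), the cleanest route is: since $E$ is stable of coprime rank and degree, $\End(E)=\CC$, so $E$ is simple; by Atiyah's theory the algebra generated by $E$ under tensor and the action of $\Pic^0(C)$ shows that $E\otimes\pi^*(-)$ or rather $\pi^*E$ for the right isogeny $\pi$ becomes a direct sum of line bundles (this is the key ``spectral cover'' phenomenon: $\pi$ is chosen so that $\pi^*E\cong\bigoplus L'_i$, and then $E$ is a direct summand of $\pi_*\pi^*E$, forcing $E=\pi_*L'$ for one of the $L'_i$ by stability again). Concretely, one takes $\pi$ to be the isogeny such that $\det E$ pulls back to an $r$-th power, or one uses the Fourier--Mukai transform $\Phi$ on $C$: a stable bundle of rank $r$, degree $d$ with $\gcd(r,d)=1$ transforms to a torsion sheaf (or shifted skyscraper) of length $r$, and unraveling $\Phi^{-1}$ of a length-$r$ torsion sheaf supported generically on $r$ distinct points exhibits $E$ as a pushforward. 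I would present this via the direct Atiyah-style argument rather than Fourier--Mukai, to keep it self-contained, but either works.

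The main obstacle is the stability assertion in the converse --- showing the chosen isogeny $\pi$ actually splits $\pi^*E$, equivalently that $\pi^*E$ is no longer stable but becomes a sum of line bundles. This requires knowing the precise group of line bundles $\xi$ with $E\otimes\xi\cong E$ (the ``theta group'' / Mumford group attached to $E$), which for coprime $(r,d)$ is exactly the $r$-torsion subgroup $C[r]$ acting through a subgroup, and then $\pi$ is the isogeny with kernel that subgroup. Pinning down that $E\otimes\xi\cong E$ for all $\xi$ in an order-$r$ subgroup, and no larger one, is the technical heart; it follows from the fact that $E\otimes\xi$ is again stable of the same rank and degree, hence lies in the same one-point-mod-twist moduli space, combined with a Chern-class / determinant count showing the stabilizer of $E$ under the $\Pic^0$-action has order exactly $r$. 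Since all of this is precisely the content of \cite[Theorem~1.2]{Oda} (and Atiyah's original work), in the final text I would simply cite it and include only the short GRR degree computation and the adjunction-based stability check for the direct statement, flagging the converse as ``see loc.\ cit.''.
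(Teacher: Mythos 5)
The paper does not actually prove this proposition: it is introduced as ``well known to experts'' and simply cited to \cite[Theorem 1.2]{Oda}, so your final plan (cite Oda and retain only the Euler-characteristic degree computation) coincides with what the authors do. Your GRR/additivity computation of $\deg\pi_*L'=d$ and the reduction of stability questions to slope bookkeeping are fine in outline.

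Since you do sketch the arguments, two corrections. First, you have swapped the labels: in the proposition the ``direct'' part is the existence of $L'$ for a given stable $E$, and the ``converse'' is the stability of $\pi_*L'$. Second, and more substantively, your claim that the stabilizer of $E$ under the $\Pic^0(C)$-action by tensoring has order exactly $r$ is wrong: since $\det(E\otimes\xi)=\det(E)\otimes\xi^{r}$ and the moduli space of stable bundles of coprime rank $r$ and degree $d$ is identified with $\Pic^d(C)$ by the determinant, the stabilizer is the full $r$-torsion subgroup $C[r]$, of order $r^{2}$. This matters because the proposition asserts existence of $L'$ for an \emph{arbitrary} \'etale cover of degree $r$, whereas your argument singles out ``the'' isogeny whose kernel is the stabilizer and would therefore only treat one cover. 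The repair is that for any connected degree-$r$ \'etale cover, $\ker(\pi^{*}:\Pic^0(C)\to\Pic^0(C'))$ is an order-$r$ subgroup whose elements are $r$-torsion by Lagrange, hence it lies in the stabilizer $C[r]$ whatever the cover is; alternatively, and more cleanly, deduce the existence direction from the stability direction by observing that $L'\mapsto\det(\pi_*L')$ is essentially the norm map $\Pic^d(C')\to\Pic^d(C)$, which is surjective, so every determinant --- hence every stable bundle of rank $r$ and degree $d$ --- is realized as some $\pi_*L'$. For the stability direction itself, the clean version of your adjunction step is: every connected \'etale cover of $C$ is Galois (as $\pi_1(C)$ is abelian), so $\pi^{*}\pi_{*}L'\cong\bigoplus_{\sigma}\sigma^{*}L'$ is semistable of slope $d$; a subsheaf $\cF\subset\pi_*L'$ of rank $r'$ and slope $\ge d/r$ pulls back to a subsheaf of $\pi^{*}\pi_{*}L'$ of slope $\ge d$, forcing equality, and then $r'd=r\deg\cF$ together with $\gcd(r,d)=1$ forces $r'=r$. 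Your version, which speaks of $\pi^{*}\cF$ as ``a subsheaf of the line bundle $L'$'', does not parse as stated.
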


\begin{theorem}\label{lem_classleaf1}
Let $T$ be a torsion sheaf on $C$. Then there exists a monomorphism
\[
\phi: E\to E(D)
\] such that $T\cong \Cok(\phi)$ if and only if $l_{\max}(T)\leq r$ and $\det(T)\cong \cO(rD)$. 
\end{theorem}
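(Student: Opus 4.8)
The plan is to prove both directions, with necessity being routine and essentially all the content lying in the explicit construction needed for sufficiency. For necessity, suppose $\phi:E\to E(D)$ is a monomorphism with $\Cok(\phi)\cong T$, so that $0\to E\to E(D)\to T\to 0$ is exact. Taking determinants (multiplicative in short exact sequences) and using $\det(E(D))=\det(E)\otimes\cO(rD)$ immediately gives $\det(T)\cong\cO(rD)$. For the bound $l_{\max}(T)\le r$, I would localize at each point $x$ in the support: since $E$ and $E(D)$ are locally free of rank $r$ over the DVR $\cO_{C,x}$ and $\phi$ is injective on stalks, the Smith normal form presents $T_x\cong\bigoplus_{j=1}^r \cO_{C,x}/(t^{a_j})$ as a sum of at most $r$ cyclic modules, i.e. $l(\lambda(x))\le r$.

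For sufficiency I would invoke Proposition \ref{lem_pushfor}. First note that a stable bundle of rank $r$ and degree $d$ on an elliptic curve forces $\gcd(r,d)=1$ (classical). By Oda's theorem I write $E\cong\pi_*L'$ for a connected \'etale degree-$r$ isogeny $\pi:C'\to C$, which I may take to be Galois with group $G=\ker\pi$ of order $r$, and $L'\in\Pic^d(C')$; then $E(D)\cong\pi_*L''$ with $L'':=L'\otimes\pi^*\cO(D)$. The hypothesis $l_{\max}(T)\le r$ is exactly what lets me lift $T$ to $C'$: since each support point $x_i$ has $r$ distinct preimages and $\lambda(x_i)$ has at most $r$ parts, I distribute the parts of $\lambda(x_i)$ over distinct preimages to obtain an effective divisor $Z'$ on $C'$ with $\pi_*\cO_{Z'}\cong T$. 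The point is that $\cO_{Z'}$ is cyclic at each of its points, so its pushforward realizes precisely the prescribed partitions.

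Then I push forward the tautological sequence $0\to L''(-Z')\to L''\to\cO_{Z'}\to 0$ along the exact functor $\pi_*$, obtaining $0\to\pi_*(L''(-Z'))\to E(D)\to T\to 0$, so that the whole problem reduces to identifying the kernel $\pi_*(L''(-Z'))$ with the given bundle $E$. This identification is the crux, and I expect it to be the main obstacle: in general $L''(-Z')$ is not isomorphic to $L'$, only to a $G$-translate of it, and only the two hypotheses together force the kernel onto $E$. The second hypothesis $\det(T)\cong\cO(rD)$ enters through the norm map: since $\operatorname{Nm}(\cO(Z'))=\cO(\pi_*Z')=\det(T)=\cO(rD)=\operatorname{Nm}(\pi^*\cO(D))$ (using $\operatorname{Nm}\circ\pi^*=(\cdot)^{\otimes r}$), the class $\cO(\pi^*D-Z')$ lies in $\ker(\operatorname{Nm}:\Pic^0(C')\to\Pic^0(C))$, which for an isogeny equals $G$. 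Coprimality then finishes the job: the homomorphism $g\mapsto t_g^*L'\otimes L'^{-1}$ is, under $\Pic^0(C')\cong C'$, multiplication by $d$, hence restricts to an automorphism of $G$ because $\gcd(d,r)=1$, so its image is all of $G$. Thus I may choose $g\in G$ with $t_g^*L'\otimes L'^{-1}\cong\cO(\pi^*D-Z')$, giving $L''(-Z')\cong t_g^*L'$ and therefore $\pi_*(L''(-Z'))\cong\pi_*(t_g^*L')\cong\pi_*L'\cong E$, since the pushforward is invariant under translation by $\ker\pi$. Composing $E\cong\pi_*(L''(-Z'))\hookrightarrow\pi_*L''\cong E(D)$ yields the desired monomorphism with cokernel $T$, completing sufficiency.
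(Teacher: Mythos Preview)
Your proof is correct and follows the same overall strategy as the paper: realize $E(D)$ (you start from $E$, but this is immaterial) as $\pi_*$ of a line bundle along a degree-$r$ isogeny via Proposition~\ref{lem_pushfor}, use the hypothesis $l_{\max}(T)\le r$ to lift $T$ to a cyclic torsion sheaf on $C'$, push forward the tautological short exact sequence, and then identify the resulting rank-$r$ degree-$d$ kernel with $E$.

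The difference is only in that last identification. The paper dispatches it in one line: since $\gcd(r,d)=1$, the converse direction of Proposition~\ref{lem_pushfor} says $\pi_*K'$ is stable, and a stable bundle on an elliptic curve is determined by its rank and determinant (Atiyah), so $\det(T)\cong\cO(rD)$ forces $\pi_*K'\cong E$. You instead work explicitly with the norm map and translation action to show that $L''(-Z')$ is a $G$-translate of $L'$, whence the pushforwards agree. Your route is more self-contained---it sidesteps both the converse of Oda's theorem and Atiyah's classification---at the cost of the extra norm/translation computation; the paper's route is shorter but leans on those two classical facts. Both are perfectly valid.
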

\begin{proof}
The only if part is obvious. We now prove the if part.

Fix an \'etale map $\pi: C^\p\to C$ of degree $r$ and a line bundle $L^\p$ on $C^\p$ so that $\pi_*L^\p\cong E(D)$ (by Proposition \ref{lem_pushfor}). Let $T$ be a torsion sheaf supported on a collection of points $\{x_1,\ldots,x_n\}\subset C$. Because $l_{\max}(T)\leq r$, there exists a collection of torsion sheaves $T^\p_1,\ldots,T^\p_n$ supported on the fibers $\pi^{-1}(x_1),\ldots,\pi^{-1}(x_n)$ respectively so that $T=\pi_*T^\p:=\pi_*(\bigoplus_{i=1}^n T^\p_i)$. For example, if $T$ is supported on a single point $x$ then we denote the unique partition by $\lambda$. Set $m=l(\lambda)$. Since $m\leq r$, we may choose points $x^\p_1,\ldots, x^\p_m$ in the fiber $\pi^{-1}(x)$ and torsion sheaf to be $T^\p:=\bigoplus_{i=1}^m\cO_{\lambda_ix_i^\p}$ for $i=1,\ldots,m$.

There is a canonical surjective morphism $j^\p: L^\p\to T^\p$, whose kernel is a line bundle $K^\p$ on $C^\p$. Because $\pi$ is \'etale, we get an exact sequence of sheaves on $C$
\[
\xymatrix{
0\ar[r] & \pi_*K^\p\ar[r] & E(D)\ar[r] &T\ar[r] &0 
}
\]
Since $\det(T)\cong \cO(rD)$, $\deg(\pi_*K^\p)=d$.
By Proposition \ref{lem_pushfor}, $\pi_*K^\p$ is stable. It is isomorphic to $E$ since they have isomorphic determinants.
\end{proof}
The above lemma classifies all possible cokernels when $\phi$ is assumed to be injective. 
When $\phi$ is not injective, it will not be enough to use only torsion sheaves to index the symplectic leaves. This brings some additional complexity. In next section, we give a complete classification of symplectic leaves (including the non-injective case) for rank 2 stable triples. However, we will mainly focus on the injective case.

The open subscheme in $M=M(E,D)$ (resp. substack in $\cM$) consisting of injective $\phi$ is denoted by $M^{\reg}$ (resp. $\cM^{\reg}$). 

Next, we will prove connectedness of the fibers of $q$ in $M^\reg$. Thus, $F_T^c$ are indeed symplectic leaves in $M^\reg$.

\begin{prop}\label{connected-prop}
Fix a coherent sheaf $T$. Denote the homotopy fiber of the map $q$ at $T[-1]$ by $F_T$. Its coarse moduli space $F_T^c$ is connected.
\end{prop}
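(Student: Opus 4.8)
The plan is to realize $F_T^c$ as (a $\GG_m$-gerbe quotient of, hence sharing connectedness with) a scheme that parametrizes, up to isomorphism, injective morphisms $\phi:E\to E(D)$ with $\Cok(\phi)\cong T$, and then exhibit this parameter space as the total space of a surjection with irreducible (hence connected) fibers over an irreducible base. Concretely, fixing $\phi$ corresponds to fixing a subsheaf $E\hookrightarrow E(D)$ with quotient $\cong T$, equivalently a surjection $E(D)\twoheadrightarrow T$ with kernel abstractly isomorphic to $E$; two such surjections give isomorphic triples iff they differ by an element of $\Aut(T)$ acting on the target. So up to the $\GG_m$-gerbe structure of $\cM$, $F_T^c$ is the quotient by $\Aut(T)$ of the locally closed subscheme $\Surj_E(E(D),T)\subset \Hom(E(D),T)$ consisting of surjections whose kernel is $\cong E$. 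Since $\Aut(T)$ is a connected algebraic group (it is an open subset of the affine space $\End(T)$, being the complement of a determinantal hypersurface, as $T$ has a single isomorphism type of indecomposable summands at each point — this uses that $C$ is a curve), it suffices to prove that $\Surj_E(E(D),T)$ is connected; and by Lemma \ref{gerbe} this connectedness transfers to $F_T^c$.

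To prove connectedness of $\Surj_E(E(D),T)$ I would fiber it over the Hilbert-scheme-type parameter space of the support data. Following the proof of Theorem \ref{lem_classleaf1}, every such $\phi$ arises from a choice of étale cover $\pi:C'\to C$ of degree $r$ with $\pi_*L'\cong E(D)$, a choice of effective divisor structure $T'$ on $C'$ with $\pi_*T'\cong T$, and a surjection $L'\twoheadrightarrow T'$. The key point is that the discrete data (the cover $\pi$ and the combinatorial type $\Lambda(T)$) is rigid: the degree-$r$ étale covers $C'$ with $\pi_*L'\cong E(D)$ form a torsor under the group of $r$-torsion points, hence a finite set, but — and this is the subtle part — for a *fixed* $E$ only one such cover actually occurs (Atiyah's classification / Proposition \ref{lem_pushfor} identifies $C'$ canonically as $C/(\text{stabilizer})$), so there is no disconnection coming from the choice of cover. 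Given the cover, the space of lifts $T'$ of $T$ with prescribed total length, together with surjections $L'\twoheadrightarrow T'$, maps onto a product of punctual Quot-schemes $\prod_i \mathrm{Quot}^{\,n_i}(L'|_{\pi^{-1}(x_i)})$ of the line bundle $L'$ on $C'$. Punctual Quot schemes of a line bundle on a smooth curve are irreducible (they are iterated projective bundles / affine-cell stratified), and as the support points $x_i$ vary in $C$ the whole family is irreducible over the relevant symmetric-product stratum, which is connected. Pulling back along $\pi$ and pushing forward is a morphism, so the image — which is all of $\Surj_E(E(D),T)$ — is connected.

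The main obstacle I anticipate is the last flexibility claim: that *every* surjection $E(D)\twoheadrightarrow T$ with kernel $\cong E$ is $\pi_*$ of a surjection $L'\twoheadrightarrow T'$ for the *fixed* cover $\pi$, i.e. that the construction in Theorem \ref{lem_classleaf1} is exhaustive, not merely nonempty. This requires showing that the kernel $K'=\ker(L'\to T')$ on $C'$ is forced to be a line bundle (automatic, since it is a subsheaf of the line bundle $L'$ on a smooth curve) and conversely that any sub-line-bundle of $E(D)$ of the right determinant isomorphic to $E$ pulls back correctly — one argues via adjunction $\Hom_C(E(D),T)\cong \Hom_{C'}(L',\pi^!T)$ and the fact that $\pi$ étale gives $\pi^! = \pi^*$, so a $C$-surjection onto $T$ with the right kernel corresponds to a $C'$-surjection onto a sheaf $T'$ with $\pi_*T'=T$, and $l_{\max}(T)\le r$ guarantees $T'$ can be (and is) supported on $r$ distinct points of each fiber, keeping it a sum of punctual sheaves on the *smooth* curve $C'$. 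Once exhaustiveness is in hand, the fibration over the connected symmetric-product stratum with irreducible punctual-Quot fibers closes the argument; everything else (the gerbe reduction, connectedness of $\Aut(T)$, irreducibility of punctual Quot schemes of a line bundle) is standard.
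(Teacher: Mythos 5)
Your first paragraph sets up exactly the right parameter space --- the locus $U\subset \PP\Hom(E(D),T)$ of surjections $\psi$ with kernel isomorphic to $E$, which surjects onto $F_T^c$ --- but the argument you then give for its connectedness has a genuine gap, and it is precisely the step you flagged. The ``exhaustiveness'' claim is false: it is \emph{not} true that every surjection $E(D)=\pi_*L'\to T$ with stable kernel is $\pi_*$ of a surjection $L'\to T'$ on the fixed \'etale cover. Under the adjunction $\Hom_C(\pi_*L',T)\cong\Hom_{C'}(L',\pi^!T)=\Hom_{C'}(L',\pi^*T)$, a general $\psi$ corresponds to a map $L'\to\pi^*T$ whose image $T'$ satisfies only $\pi_*T'\twoheadrightarrow T$, not $\pi_*T'\cong T$; equivalently, a general map $E\to E(D)$ decomposes under $\Hom_C(\pi_*M',\pi_*L')\cong\bigoplus_{\sigma\in\mathrm{Gal}(C'/C)}\Hom_{C'}(M',\sigma^*L')$ into all Galois components, and only the diagonal ones come from maps of line bundles on $C'$. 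A dimension count confirms this: the maps of the form $\pi_*(K'\to L')$ form a family of projective dimension $\deg L'-\deg K'-1=rk-1$, whereas $\PP\Hom(E,E(D))$ has dimension $r^2k-1$, so for $r\ge 2$ your construction misses almost everything. The construction via the cover (as in Theorem \ref{lem_classleaf1}) proves non-emptiness of $F_T^c$, not that it parametrizes all points.

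The fix is much shorter than the machinery you built, and is what the paper does: $U$ is an \emph{open} subset of the irreducible variety $\PP\Hom(E(D),T)$. Indeed, surjectivity of $\psi$ and semistability of $\ker(\psi)$ are open conditions, and once $\ker(\psi)$ is semistable it is automatically stable of rank $r$, degree $d$ (coprime) with $\det\ker(\psi)\cong\det E$, hence isomorphic to $E$ by uniqueness of stable bundles with fixed determinant on an elliptic curve. An open subset of an irreducible variety is irreducible, hence connected, and $F_T^c$ is its image under the morphism $\psi\mapsto(\ker(\psi)\hookrightarrow E(D))$, so $F_T^c$ is connected (in fact irreducible). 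No quotient by $\Aut(T)$, no choice of cover, and no punctual Quot schemes are needed for this statement.
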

\begin{proof}
We may assume that $\det(T)\simeq \cO(rD)$, since otherwise $F^c_T$ is empty.

Given an injective morphism $\phi: E\to E(D)$ with $\Cok(\phi)\cong T$, we get a surjection $E(D)\to T$. 
Conversely, starting with a surjection $\psi: E(D)\to T$, such that $\ker(\psi)$ is a stable bundle then we have
$\ker(\psi)\cong E$ (since $\ker(\psi)$ has the same determinant as $E$). Let $U\sub \PP\Hom(E(D),T)$ be
the open subset corresponding to surjective $\psi$ with stable $\ker(\psi)$. Then we have a surjective morphism
$U\to F^c_T$ associating with $\psi$ the morphism $\phi:E\simeq \ker(\psi)\to E(D)$ (well defined up to a scalar).
This implies that $F^c_T$ is irreducible.

\end{proof}
\begin{remark}
The construction of Proposition \ref{connected-prop} identifies $F^c_T$ with an open locus of the quotient stack
\begin{equation}\label{pcomp}
\overline{F}_T:=\Hom^s(E(D),T)/\Aut(T),
\end{equation}
where $\Hom^s$ stands for the space of surjective maps (points of $F^c_T$ correspond to maps with stable kernel). 
In fact, it is easy to see that the right action of $\Aut(T)$ on $\Hom^s(E(D),T)$ is free, so the coarse moduli space of $\ol{F}_T$ exists as an algebraic space. Note that the group $\Aut(T)$ can be computed using the decomposition \eqref{dualdecompT}.
For example, let us consider the case of a generic symplectic leaf when $T$ is the direct sum of skyscraper sheaves of $rk$ distinct points. In this case, $\ol{F}_T$ is the $rk$-fold product of $\PP^{r-1}$. 
In Section \ref{sec:rank-2} we will show that in the simplest case $r=2$ and $k=1$ 
the symplectic leaf $F_T^c$ is the complement to an elliptic curve in $\PP^1\times\PP^1$ of bidegree $(2,2)$.
\end{remark}

\subsection{Symplectic leaves in $A^\reg(E,D)$} \label{sec:det-sym}

We denote by $A^\reg(E,D)\sub A(E,D)$ the open subset of injective $\phi$.
The following corollary is a more precise version of some assertions stated in \cite[Sec.\ 3]{HM02}.

\begin{corollary} \label{leafofA}
Let $S=\det^{-1}(s)\sub A(E,D)$ be a nonempty fiber of the map $\det$, where $s\neq 0$.
Then $S\sub A(E,D)$ (resp., $\PP S\sub M^\reg(E,D)$) is a Poisson subvariety and the union of a finite number of symplectic leaves. The natural map
$\pi:S\to \PP S$, which is an \'etale covering of degree $r$, is Poisson. For each symplectic leaf $F\sub \PP S$,
each connected component of the preimage $\pi^{-1}(F)$ is a symplectic leaf in $A(E,D)$.
If $s\in H^0(E,\cO(rD))$ has only simple zeros then $S$ is a smooth symplectic subvariety in $A(E,D)$.
\end{corollary}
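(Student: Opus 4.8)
The plan is to deduce Corollary~\ref{leafofA} from Proposition~\ref{gen-casimir-prop} together with the structural results already established, essentially by unwinding the definitions. First I would observe that since $\det:A(E,D)\to H^0(C,\cO(rD))$ is Casimir with respect to $\wt{\Pi}$ by Proposition~\ref{detcasimir}, every nonempty fiber $S=\det^{-1}(s)$ is automatically a Poisson subscheme of $A(E,D)$; the same applies to its projectivization $\PP S\sub M^{\reg}(E,D)$ because the projectivized determinant map $M^{\reg}(E,D)\to \PP H^0(C,\cO(rD))$ is Casimir with respect to $\Pi$. Next I would check smoothness of $S$ (resp.\ $\PP S$) in the case $s$ has only simple zeros: writing $s=\sum_{i=1}^{rk} y_i$ with the $y_i$ distinct, the fiber $\det^{-1}(s)\cap A^{\reg}$ consists of those $\phi$ whose cokernel is a length-$rk$ torsion sheaf supported on exactly the points $y_i$, i.e.\ $T=\bigoplus \cO_{y_i}$; one then invokes Theorem~\ref{smoothfiber} (or the description $\ol F_T=(\PP^{r-1})^{rk}$ after the proof of Proposition~\ref{connected-prop}) to see that the union of the corresponding leaves is smooth, so $S$ is a smooth Poisson subvariety, hence symplectic since it is a union of symplectic leaves of the correct dimension.

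Then I would address the étale covering claim: the restriction $\pi:S\to \PP S$ of the natural projection $A(E,D)\setminus\{0\}\to M(E,D)$ to a determinant fiber is finite of degree $r$, because $\pi^{-1}([\phi])=\{\zeta\phi : \zeta^r\det(\phi)=s\}$ has exactly $r$ points (the $r$-th roots of a fixed nonzero scalar), and it is étale since the $\GG_m$-action is free on $A^{\reg}$ and the $r$-th power map on $\GG_m$ is étale. That $\pi$ is Poisson is immediate from the fact that $\wt{\Pi}$ is $\GG_m$-invariant and compatible with $\Pi$ via the full projection $A(E,D)\setminus\{0\}\to M(E,D)$, as recorded right after formula \eqref{Poi-3}; restricting a Poisson submersion to Poisson subvarieties upstairs and downstairs keeps it Poisson.

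Finally, for the leaf correspondence: given a symplectic leaf $F\sub \PP S$, I would show each connected component of $\pi^{-1}(F)$ is a symplectic leaf of $A(E,D)$. Since $\pi$ is an étale Poisson covering, $\pi^{-1}(F)$ is a smooth Poisson subvariety of $S$ (hence of $A(E,D)$) on which the induced bivector is the étale pullback of the symplectic bivector on $F$, so it is symplectic; its connected components are then the symplectic leaves. Conversely, any symplectic leaf $G$ of $A^{\reg}(E,D)$ lies in a single determinant fiber $S$ (as $\det$ is Casimir and constant on leaves), and $\pi(G)$ is a smooth connected Poisson, hence symplectic, subvariety of $\PP S$ — so $\pi(G)$ is contained in a symplectic leaf $F$ of $M^{\reg}(E,D)$, forcing $G$ to be a connected component of $\pi^{-1}(F)$ by dimension count and the étale property. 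The one genuinely delicate point — the main obstacle — is verifying that the induced bivector on $\pi^{-1}(F)$ is \emph{nondegenerate}, i.e.\ that the leaves upstairs have the same dimension as $F$ rather than being degenerate loci; this follows because étale maps preserve the rank of a bivector pointwise, but one must be careful that $\pi^{-1}(F)$ is genuinely a smooth subscheme (not merely a union of leaves with jumps), which is where the simple-zero hypothesis and the smoothness results of Section~\ref{sec:smoneleaf} are doing real work.
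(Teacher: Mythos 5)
Your proposal follows essentially the same route as the paper: the Casimir property of $\det$ (Proposition \ref{detcasimir}) gives the Poisson-subvariety claims, the leaf correspondence comes from $\pi$ being an \'etale Poisson covering, and the simple-zero case is handled by identifying the cokernel. Two small points. First, the assertion that $\PP S$ is a union of \emph{finitely many} symplectic leaves for a general $s$ is not addressed in your write-up; the paper gets it in one line by observing that for $\phi\in\PP S$ the cokernel is a torsion sheaf supported on $\divis(s)$ with determinant $\cO(rD)$, so only finitely many isomorphism types $T$ (hence finitely many leaves $F^c_T$) can occur. Second, the ``genuinely delicate point'' you flag at the end is misplaced: the smoothness of $\pi^{-1}(F)$ requires no simple-zero hypothesis at all, since $F$ is already a smooth symplectic leaf by Theorem \ref{smoothfiber} and $\pi$ is \'etale over it, and \'etale Poisson maps preserve the rank of the bivector, so nondegeneracy upstairs is automatic. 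The simple-zero hypothesis enters only in the final claim, where it forces $T\simeq\cO_{\divis(s)}$ to be the unique possible cokernel, so that $\PP S$ is a \emph{single} leaf and hence $S$ itself (not just each component of a preimage of a leaf) is smooth and symplectic.
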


\begin{proof}
The first assertion follows from the fact that $\det$ and its projectivization are Casimir maps.
For $\phi\in \PP S$ the map $\phi:E\to E(D)$ degenerates exactly on zeros of $s$. Therefore,
$T=\coker(\phi)$ is a torsion sheaf supported on the divisor of zeros of $s$. There are only finitely many
possibilities for $T$, so $\PP S$ is a union of finitely many symplectic leaves.
The fact that the projection $\pi:S\to \PP S$ is Poisson follows from the same fact about the morphism 
$A(E,D)\setminus \{0\}\to M(E,D)$. If $F\sub \PP S$ is a symplectic leaf then the map $\pi^{-1}(F)\to F$ is \'etale Poisson,
hence, $\pi^{-1}(F)$ is symplectic. 

In the case when the divisor of zeros, $\divis(s)$, is simple, the fact that degree of $T$ is equal to the degree of 
$\divis(s)$
implies that $T\simeq \cO_{\divis(s)}$. Thus, in this case $\PP S$ is a single symplectic leaf, which implies that
$S$ is symplectic. 
\end{proof}

\subsection{Stratification of $M^\reg$}\label{sec:deformleaf}
In Section \ref{sec:classifyleaf}, we gave a rough classification for symplectic leaves of $M^{\reg}$ by showing which torsion sheaves can occur as a cokernel. Now we are going to show that $M^{\reg}$ can be stratified by locally closed subschemes, where each stratum has a structure of a smooth fibration with symplectic leaves as fibers. 

Let $\TS(C)$ be the substack of $\coh(C)$ consisting of torsion sheaves. The connected components of $\TS(C)$ is indexed by the length of the sheaf
\[
\TS(C)=\bigsqcup_{l\geq 1} \TS^l(C).
\] 
For $T\in\TS^l(C)$, its cycle class $[T]$ lies in the $l$-th symmetric product $S^lC$.
\begin{lemma}
The assignment $T\mapsto [T]$ defines a morphism of stacks $f: \TS^l(C)\to S^lC$. 
\end{lemma}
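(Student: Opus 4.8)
The plan is to produce this morphism by a universal construction, namely by exhibiting the cycle class as coming from a family of divisors over the moduli stack of torsion sheaves. First I would recall that a flat family of length-$l$ torsion sheaves on $C$ parametrized by a base scheme $S$ is the same as an $\cO_{C\times S}$-module $\cT$, flat over $S$, whose fibers over points of $S$ are torsion sheaves of length $l$. Since $C$ is a smooth curve, the support of such a $\cT$ is a finite scheme over $S$, and the $0$-th Fitting ideal $\mathrm{Fitt}_0(\cT)\sub \cO_{C\times S}$ cuts out a relative effective Cartier divisor $\mathrm{div}(\cT)\sub C\times S$ of relative degree $l$ over $S$. (This is the standard fact that for a finite flat module over a curve the support counted with the Fitting-ideal multiplicity is a relative Cartier divisor; one checks it fiberwise, using that $C$ is regular of dimension one, so that a length-$l$ module has a free resolution of length one whose determinant generates the Fitting ideal.) A relative effective Cartier divisor of degree $l$ in $C\times S$ over $S$ is exactly an $S$-point of the relative symmetric power $S^lC$ (using the standard identification $S^lC\simeq \mathrm{Hilb}^l(C)\simeq \mathrm{Div}^l(C)$ for $C$ a smooth curve), so this assignment defines the desired morphism $f:\TS^l(C)\to S^lC$.

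Next I would spell out functoriality and compatibility with base change. The formation of the $0$-th Fitting ideal commutes with arbitrary base change, hence so does the formation of $\mathrm{div}(\cT)$; this is precisely what is needed to see that the rule ``$(\cT/S)\mapsto \mathrm{div}(\cT)\in S^lC(S)$'' is a natural transformation of functors on the category of $S$-schemes, i.e.\ a morphism of stacks. On closed points it sends a torsion sheaf $T$ supported on $\{x_1,\dots,x_n\}$ with local lengths $\ell_1,\dots,\ell_n$ to the divisor $\sum_i \ell_i x_i$, which agrees with the cycle class $[T]$ as defined in the excerpt; in terms of the decomposition \eqref{dualdecompT} the assigned divisor is $\sum_i \bigl(\sum_{j\ge 1} j\, m_j(\lambda(x_i))\bigr)x_i$. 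One should also note that $\TS^l(C)$ is an algebraic stack (it is an open and closed substack of $\coh(C)$, or can be presented as a quotient of a Quot scheme), and that $S^lC$ is a smooth projective variety, so ``morphism of stacks'' has its usual meaning here.

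The only genuinely technical point — and the step I expect to be the main obstacle to write carefully — is the claim that $\mathrm{Fitt}_0(\cT)$ defines a \emph{relative} Cartier divisor over $S$, i.e.\ that the subscheme it cuts out is flat of relative dimension $0$ and degree $l$ over $S$, rather than just fiberwise of the expected length. The clean argument is local: Zariski-locally on $C\times S$ the sheaf $\cT$, being coherent with $0$-dimensional support and flat over $S$, admits a resolution $0\to \cO^{\oplus m}\xrightarrow{\ A\ }\cO^{\oplus m}\to \cT\to 0$ by a square matrix $A$ (one can take $m$ to be the number of generators and use that a presentation of a finite flat module over a regular two-dimensional-in-the-fibers situation can be chosen square after localizing, because the kernel of a surjection onto a flat module is again locally free when restricted to each fiber of codimension one support; more simply, work on $C\times S$ near the support, where $C\times S\to S$ is smooth of relative dimension one and $\cT$ has a length-one resolution). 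Then $\mathrm{Fitt}_0(\cT)=(\det A)$, and $\det A$ is a nonzerodivisor whose vanishing locus is finite flat of degree $l$ over $S$ because $\det A$ restricts on each fiber to a nonzero element cutting out length exactly $l$. I would present this fiberwise-flatness check using the local criterion of flatness together with constancy of fiber length, and cite the standard references for Fitting ideals and for the divisor associated to a torsion sheaf on a smooth curve (e.g.\ the ``norm''/``divisor of a module'' construction). Once this is in place, the existence and the stated description of $f$ follow immediately.
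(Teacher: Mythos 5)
Your proof is correct, but it takes a genuinely different route from the paper, which disposes of the lemma in one line by citing Corollary 7.15 of Rydh's thesis \cite{Ry08} on families of cycles and the Chow scheme. Your argument is the classical, self-contained one for curves: since $C$ is a smooth curve, a $0$-cycle of degree $l$ is the same thing as an effective Cartier divisor of degree $l$, so $S^lC\simeq \mathrm{Div}^l(C)\simeq \mathrm{Hilb}^l(C)$, and the morphism can be produced directly by sending an $S$-flat family $\cT$ on $C\times S$ to the relative Cartier divisor cut out by $\mathrm{Fitt}_0(\cT)$. The technical points you flag are exactly the right ones and all go through: near the support one gets a square presentation $0\to\cO^{\oplus m}\xrightarrow{A}\cO^{\oplus m}\to\cT\to 0$ because the kernel of a chosen surjection $\cO^{\oplus m}\to\cT$ is $S$-flat with locally free fibers of constant rank $m$ (a length-one resolution on each fiber of the smooth relative curve), hence locally free; $\det A$ is a nonzerodivisor on each fiber, so $V(\det A)$ is $S$-flat by the slicing criterion, with fiber length $\ell(\cO_{C,x}/\mathrm{Fitt}_0(T_x))=\ell(T_x)$; and Fitting ideals commute with arbitrary base change, which gives functoriality. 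What each approach buys: Rydh's result is vastly more general (families of cycles of arbitrary dimension in arbitrary ambient schemes, without the divisor/Hilbert-scheme crutch) and lets the paper avoid any construction at all, while your Fitting-ideal argument is elementary, explicit, and makes transparent the formula $[T]=\sum_i\ell(T_{x_i})\,x_i$ on closed points that the paper uses in the sequel. The only cosmetic remark is that the phrase about a ``regular two-dimensional-in-the-fibers situation'' should be cleaned up to the statement you give in the same sentence, namely that $C\times S\to S$ is smooth of relative dimension one; as written it could mislead a reader into thinking regularity of the total space $C\times S$ is being used, when in fact only fiberwise regularity and $S$-flatness enter.
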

\begin{proof}
This is a special case of corollary 7.15 of \cite{Ry08}.
\end{proof}


The assignments
\[
\{\phi:E\to E(D)\} \mapsto \Cok~\phi\mapsto [\Cok~\phi]
\] defines a sequence of morphisms
\[
M^{\reg}\to \TS^{rk}(C) \to S^{rk}C.
\]
Let $AJ:S^{rk}C\to \Pic^{rk}(C)$ denote the Abel-Jacobi map. Using the fact that $\det(\Cok(\phi))\simeq \cO(rD)$
for $\phi$ in $\cM^{\reg}$, we obtain morphisms
\[
M^{\reg}\to \TS^{rD}(C) \to \PP H^0(C,\cO(rD)) \sub S^{rk}C,
\]
where $\TS^{rD}(C)\sub \TS(C)$ is the substack of torsion sheaves $T$ with $\det(T)\simeq\cO(rD)$
(here we identify the preimage of the Abel-Jacobi map over $\cO(rD)$ with $\PP H^0(C,\cO(rD))$).
Note that the composed map
$M^{\reg} \to \PP H^0(C,\cO(rD))$
is precisely the Casimir map in Proposition \ref{detcasimir}.

For a partition $\nu=(\nu_1\geq \nu_2\ldots\geq\nu_n>0)$ of $l$, we define a locally closed subscheme of $S^l(C)$,
\[
S_\nu C:=\Big\{\sum_{i=1}^n \nu_i[x_i]| x_i\neq x_j ~\text{for}~ i\neq j\Big\}.
\]
Note that we have a Galois covering
$$\rho_\nu:C^{n,\dis}\to S_\nu C$$ 
where $C^{n,\dis}\sub C^n$ is the complement to all the diagonals $x_i=x_j$. The corresponding Galois group
acts by permutations of the coordinates $x_i$ with equal parts $\nu_i$.



Now suppose that in addition to
a partition $\nu=(\nu_1\geq\nu_2\ldots\geq \nu_n)$ of $l$ we have
a collection of partitions $\Lambda_\nu:=\{\lambda^i\}_{i=1}^n$ so that $|\lambda^i|=\nu_i$  for all $i=1,\ldots,n$. 
Then we would like to define the stratum $\TS_{\nu,\Lambda_\nu}$ in the stack $\TS^l$ of torsion sheaves
of length $l$, corresponding to the sheaves of the form
\begin{equation}\label{partition-sheaf-eq}
 \bigoplus_{i=1}^n \left(\bigoplus_{j=1}^{l(\lambda^i)} \cO_{\lambda_j^i x_i}\right)\cong \bigoplus_{i=1}^n \left(\bigoplus_{j\geq 1} \cO_{j x_i}^{\oplus m_j(\lambda(x_i))}\right)
\end{equation} 
 for a distinct collection of points $x_1,\ldots,x_n$. Note that the underlying cycle of such a sheaf would lie in the stratum 
$S_\nu C$ of $S^lC$.

First, we observe that we can determine the isomorphism type of a torsion $\cO_{C,x}$-module $T$, i.e., the partition 
$\lambda=(\lambda_1\ge\ldots\ge \lambda_k)$ such that 
$$T\simeq \bigoplus_{j=1}^k \cO_{C,x}/\fm^{\lambda_j},$$
where $\fm\sub\cO_{C,x}$ is the maximal ideal,
from the numbers 
$$a_j:=\ell(T\otimes_{\cO_{C,x}} \cO_{C,x}/\fm^j).$$
Namely, let $m_p$ be the multiplicity of the part $p$ in the partition $\lambda$. Then one has
$$a_j=\sum_{p=1}^j pm_p+ j\cdot \sum_{p>j} m_p \ \text{ for } j\ge 1.$$
These equations can be solved for $m_1,m_2,\ldots$, so the numbers $(a_j)$ determine the partition $\lambda$.

Thus, the substack $\TS_{\nu,\Lambda_\nu}\sub \TS^l$ should parametrize torsion sheaves $T$ with the underlying cycle 
$\nu_1x_1+\ldots+\nu_nx_n$ such that the lengths $\ell(T\otimes \cO_{jx_i})$ are fixed (and computed from the partitions
$\lambda_i$). Let $\TS_\nu\sub \TS^l$ denote the preimage of the stratum $S_\nu C\sub S^l C$.
Given a flat family of torsion sheaves $\cT$ in $\TS_\nu$ over the base $S$ (so $\cT$ is a sheaf on $S\times C$),
we can consider the induced \'etale covering $S'\to S$, corresponding to the \'etale covering $\rho_\nu:C^{n,\dis}\to C_\nu$,
so that we have the support map 
$$(x_1,\ldots,x_n):S'\to C^{n,\dis}.$$
Let $\cT'$ be the pull-back of $\cT$ to $S'\times C$ and let $\sigma_i:S'\to S'\times C$ be the section corresponding to $x_i$.
Let also $J_i\sub \cO_{S'\times C}$ be the ideal sheaf of the image of $\sigma_i$.
Then by definition, the requirement that $\cT$ is in $\TS_{\nu,\Lambda_\nu}\sub \TS^l$ means that the sheaves
$\cT'\otimes \cO/J_i^j$ are flat over $S'$ and their push-forwards to $S'$ have given rank (determined by the partitions 
$\lambda_i$). 

It is easy to see that in fact $\TS_{\nu,\Lambda_\nu}$ is a locally closed substack of $\TS^l$.
Indeed, given a flat family $T$ of sheaves in $\TS^l$ over a scheme $S$, locally over $S$ we can find a surjection
$\cO^{\oplus l}\to T$, so that $T$ will be realized as a quotient of a cokernel of vector bundles $f:V\to V'$.
Then we can use the well known fact that the fibered product of $\TS_{\nu,\Lambda_\nu}$ with $S$ will
be given as intersection of degeneracy and non-degeneracy loci of $f$ (see \cite[Lem.\ 4.1.3]{P-17}).

Note that the morphism $\TS_{\nu,\Lambda_\nu}\to S_{\nu}C$ factors through as the composition
$$\TS_{\nu,\Lambda_\nu}\to S_{\nu,\Lambda_\nu}\to S_{\nu}C,$$
where $S_{\nu,\Lambda_\nu}$ is the quotient of $C^{n,\dis}$ by the subgroup of $\sigma\in S_n$ compatible
with $\Lambda_\nu$, i.e., such that $\la^{\sigma(i)}=\la^i$ for $i=1,\ldots,n$.

Let us denote by $\TS^{rD}_{\nu,\Lambda_\nu}$ the locally closed substack of $\TS^{rD}$ obtained as the intersection
with $\TS_{\nu,\Lambda_\nu}$.

For a collection of partitions $\Lambda_\nu=\{\lambda^i\}^n_{i=1}$, we define 
\[
l_{\max}(\Lambda_\nu):=\max\{l(\lambda^i)|i=1,\ldots n\}.
\]
For each $\Lambda_\nu$ with $l_{\max}(\Lambda_\nu)\le r$ we define the stratum $M_{\nu,\Lambda_\nu}^{\reg}$,
as a locally closed subscheme of $M^{\reg}$ defined as the fiber product
\begin{equation}\label{fiberdig1}
\xymatrix{
M_{\nu,\Lambda_\nu}^{\reg}\ar[r]\ar[d] &M^{\reg}\ar[d]^q \\
\TS^{rD}_{\nu,\Lambda_\nu}\ar[r] & \TS^{rD}(C)
}
\end{equation}
This defines a finite stratification on $M^{\reg}$.


The following theorem is the main result of this section, which says the map $\delta$ has smooth restrictions to the stratification 
$\{M_{\nu,\Lambda_\nu}^{reg}\}_{\nu,\Lambda_\nu}$.

\begin{theorem}\label{fibrationleaf} For a partition $\nu$ of $r\cdot k$ and a collection of partitions $\Lambda_\nu$ as above,
such that $l_{\max}(\Lambda_\nu)\le r$,
the restriction of $\delta$,
\[
\delta_{\nu,\Lambda_\nu}: M_{\nu,\Lambda_\nu}^{reg}\to S^{rD}_\nu C
\] is smooth and surjective.
\end{theorem}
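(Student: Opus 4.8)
The plan is to analyze the map $\delta_{\nu,\Lambda_\nu}$ fiberwise and reduce smoothness and surjectivity to statements about the individual symplectic leaves $F^c_T$ that we have already established, together with an infinitesimal deformation argument that moves the support points and the torsion types in families. More precisely, I would first note that $\delta_{\nu,\Lambda_\nu}$ factors as the composition $M^{\reg}_{\nu,\Lambda_\nu}\to \TS^{rD}_{\nu,\Lambda_\nu}\to S^{rD}_\nu C$, where the first map $q$ is the one from diagram \eqref{fiberdig1} and the second is the (restriction of the) cycle-class map. By Proposition \ref{lem_pushfor} and Theorem \ref{lem_classleaf1}, for every point $\sum \nu_i x_i\in S^{rD}_\nu C$ and every choice of torsion sheaf $T$ of the prescribed combinatorial type $(\nu,\Lambda_\nu)$ supported on $\{x_i\}$ with $\det T\cong\cO(rD)$, there is an injective $\phi:E\to E(D)$ with $\Cok\phi\cong T$; this already gives surjectivity of $\delta_{\nu,\Lambda_\nu}$ (the constraint $l_{\max}(\Lambda_\nu)\le r$ is exactly what makes such $T$ realizable as a cokernel, by the $\pi_*$-construction in the proof of Theorem \ref{lem_classleaf1}).

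For smoothness, I would use the infinitesimal criterion: it suffices to show that for a point $P\in M^{\reg}_{\nu,\Lambda_\nu}$ corresponding to $\phi$ with $T=\Cok\phi$, the tangent map $T_P M^{\reg}_{\nu,\Lambda_\nu}\to T_{[T]}S^{rD}_\nu C$ is surjective, and that the stratum $M^{\reg}_{\nu,\Lambda_\nu}$ is itself smooth of the expected dimension. The key observation is that $T_{[T]}S^{rD}_\nu C$ is canonically $\bigoplus_{i=1}^n T_{x_i}C$ intersected with the hyperplane coming from the Abel--Jacobi constraint over $\cO(rD)$, and that deformations of $\phi$ inside the fiber of $q$ (i.e.\ keeping the cokernel sheaf fixed up to isomorphism) are precisely the derived symplectic leaf directions of Theorem \ref{smoothfiber}. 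So I would build a local section: given a first-order deformation of the support points $x_i\mapsto x_i+\epsilon v_i$ (compatible with the determinant constraint), I would produce a first-order deformation of $\phi$ realizing it. Concretely, using the description $T\cong\pi_*T'$ with $\pi:C'\to C$ étale and $E(D)\cong\pi_*L'$ from Theorem \ref{lem_classleaf1}, deformations of the support downstairs lift canonically to deformations of the support points of $T'$ upstairs (étaleness), and the surjection $L'\twoheadrightarrow T'$ deforms because $\Ext^1(L',T')=0$ (both supported appropriately, $L'$ a line bundle); pushing forward and taking kernels deforms $\phi$ while keeping $\ker$ stable with the same determinant, hence still $\cong E$. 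Combined with the fact (from Theorem \ref{smoothfiber}, applied to $\cH=(0,T)$ with the strengthening that the locally closed substack $\TS^{rD}_{\nu,\Lambda_\nu}$ is smooth) that the fibers $F^c_T$ are smooth of constant dimension $r^2k-h^0_{00}-h^0_{11}-h^0_{01}$ along the stratum, this gives that $M^{\reg}_{\nu,\Lambda_\nu}$ is smooth and that $\delta_{\nu,\Lambda_\nu}$ is a smooth fibration.

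I would make the dimension bookkeeping precise as a consistency check: $\dim M^{\reg}_{\nu,\Lambda_\nu}=\dim S^{rD}_\nu C+\dim F^c_T$, where $\dim S^{rD}_\nu C=n-1$ (the $n$ support points minus one Abel--Jacobi condition) and $\dim F^c_T$ is constant along the stratum because the Hom- and Ext-dimensions $h^i_{jk}$ between $\cH^0=0$, $\cH^1=T$ depend only on the isomorphism type of $T$, which is locally constant on $\TS^{rD}_{\nu,\Lambda_\nu}$. The main obstacle I anticipate is the explicit construction of the lifted deformation of $\phi$, i.e.\ showing that the tangent map to $\delta_{\nu,\Lambda_\nu}$ really is surjective onto the prescribed deformations of the support cycle: one must check that the deformation of $\ker(\pi_* L'\to \pi_* T')$ obtained by moving support points stays a \emph{stable} bundle (an open condition, so automatic to first order) and, more delicately, that its determinant stays $\cO(rD)$, which should follow from the compatibility of the chosen first-order motion $\sum v_i$ with the tangent space of the Abel--Jacobi fiber. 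Once these two points are dispatched, smoothness follows from the infinitesimal lifting criterion and the already-established smoothness of the fibers.
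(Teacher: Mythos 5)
Your outline is correct and shares the paper's overall architecture (surjectivity from Theorem \ref{lem_classleaf1}; smoothness from surjectivity of tangent maps via a deformation-lifting argument), but the two key steps are implemented differently. For the lifting, the paper isolates Lemma \ref{surj}: \emph{every} first-order deformation of $T=\Cok(\phi)$ preserving $\det(T)$ lifts to a first-order deformation of $\phi$; this is proved abstractly by viewing $\phi$ as a point of a Quot-scheme and using cohomology and base change to extend the surjection $E(D)\to T$ over the dual numbers. You instead construct the lifted deformations explicitly through the \'etale cover $\pi:C'\to C$ of Theorem \ref{lem_classleaf1}, moving the support points of $T'$ upstairs and deforming the surjection $L'\to T'$; this works (the two caveats you flag — stability of the deformed kernel and constancy of its determinant — are handled exactly as you suggest, the determinant being forced by the Abel--Jacobi constraint and the triviality of $H^1(\sEnd_0(E))$), but it only produces the deformations coming from moving the cycle, whereas Lemma \ref{surj} gives all determinant-preserving ones. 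That extra generality is what the paper uses to get smoothness of the stratum $M^{\reg}_{\nu,\Lambda_\nu}$ for free: the fiber product \eqref{fiberdig1} is transverse because $q$ has surjective tangent map, so the stratum is smooth without ever invoking Theorem \ref{smoothfiber}. Your route to smoothness of the stratum — smooth base, smooth fibers $F^c_T$ of constant dimension, plus surjectivity of $d\delta$, then a dimension count $\dim T_PM^{\reg}_{\nu,\Lambda_\nu}\le \dim F^c_T+\dim S^{rD}_\nu C\le \dim_P M^{\reg}_{\nu,\Lambda_\nu}$ — can be made to work, but it requires an extra check you gloss over: that the \emph{scheme-theoretic} fibers of $\delta_{\nu,\Lambda_\nu}$ coincide with the reduced smooth leaves $F^c_T$ (equivalently, that the fiber of $\TS^{rD}_{\nu,\Lambda_\nu}\to S^{rD}_\nu C$ over a cycle is the reduced residual gerbe $B\Aut(T)$). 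The paper sidesteps this by exhibiting the section $S^{rD}_{\nu,\Lambda_\nu}C\to \TS^{rD}_{\nu,\Lambda_\nu}$ given by the tautological family of split sheaves \eqref{partition-sheaf-eq} — which is, in effect, the global version of your \'etale-cover construction — and composing it with Lemma \ref{surj} to get surjectivity of $d\delta_{\nu,\Lambda_\nu}$ directly between schemes already known to be smooth. In short: your proof is viable, but the paper's Lemma \ref{surj} is the cleaner pivot, making both the smoothness of the stratum and the surjectivity of the tangent map immediate and rendering the fiberwise input from Theorem \ref{smoothfiber} unnecessary for this particular statement.
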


We need the following Lemma for the proof.

\begin{lemma}\label{surj}
Let $\phi:E\to E(D)$ be an injective morphism. Denote $T$ for $\Cok(\phi)$. Given a first order deformation $T_\ep$ of $T$ that preserves $\det(T)$, there exists a first order deformation $\phi_\ep: E\to E(D)$ so that $\Cok(\phi_\ep)\cong T_\ep$.
\end{lemma}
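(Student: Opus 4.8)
The plan is to realize a first-order deformation of $T$ as a deformation of a surjection, and then transport this to a deformation of $\phi$ via kernels. Concretely, since $\phi: E\to E(D)$ is injective with $\Cok(\phi)\cong T$, we have a short exact sequence $0\to E\to E(D)\to T\to 0$, i.e.\ a surjection $\psi: E(D)\twoheadrightarrow T$ with $\ker\psi\cong E$. A first-order deformation $T_\ep$ of $T$ over $k[\ep]/\ep^2$ that preserves $\det(T)$ is classified by a class in $\Ext^1_C(T,T)$. First I would lift $\psi$ to a surjection $\psi_\ep: E(D)\otimes k[\ep]\to T_\ep$ of sheaves over $C\times\Spec k[\ep]/\ep^2$: the obstruction to lifting lies in $\Ext^1_C(E(D),T)$, but using $\Ext^1(E,E(D))=0$ together with the long exact sequence obtained by applying $\Hom(-,T)$ (or $\Hom(E(D),-)$) to $0\to E\to E(D)\to T\to 0$, one sees this obstruction vanishes, so a lift $\psi_\ep$ exists; moreover $\psi_\ep$ is automatically surjective since surjectivity of sheaf maps is an open condition (equivalently, by Nakayama on stalks).

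The second step is to set $\cE_\ep:=\ker(\psi_\ep)$, which is a sheaf on $C\times\Spec k[\ep]/\ep^2$, flat over $k[\ep]/\ep^2$ (as the kernel of a surjection onto a flat sheaf), and whose special fiber is $\ker\psi\cong E$. Thus $\cE_\ep$ is a first-order deformation of $E$. Since $E$ is a stable bundle and stability and local-freeness are open conditions, $\cE_\ep$ is again a (family of) stable vector bundle(s). Now I claim $\cE_\ep\cong E\otimes k[\ep]/\ep^2$: the deformations of the stable bundle $E$ on the elliptic curve $C$ that preserve the determinant are classified by $H^1(C,\sEnd_0(E))=H^1(C,\fs)$, which vanishes by stability (this is exactly the vanishing $H^i(\fs)=0$ used repeatedly in the paper); and the determinant of $\cE_\ep$ is determined by $\det(E(D)\otimes k[\ep])$ and $\det(T_\ep)$, the latter being the trivial deformation of $\cO(rD)$ by hypothesis, so $\det\cE_\ep$ is the trivial deformation of $E$ — hence $\cE_\ep$ is the trivial deformation. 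Fixing such an isomorphism $\cE_\ep\cong E\otimes k[\ep]/\ep^2$, the inclusion $\cE_\ep\hookrightarrow E(D)\otimes k[\ep]/\ep^2$ becomes a morphism $\phi_\ep: E\to E(D)$ over $k[\ep]/\ep^2$ reducing to $\phi$ modulo $\ep$, with $\Cok(\phi_\ep)\cong T_\ep$ by construction.

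The main obstacle I anticipate is the determinant bookkeeping in the previous paragraph — one must be careful that ``preserving $\det(T)$'' for the deformation $T_\ep$ really forces $\det\cE_\ep$ to be the \emph{trivial} first-order deformation of $E$, not merely abstractly isomorphic to $E$ in each fiber; this requires matching up the determinant exact sequence for $0\to\cE_\ep\to E(D)\otimes k[\ep]\to T_\ep\to 0$ with the hypothesis, using that $\det$ is additive in short exact sequences and that $H^1(C,\cO_C)$-valued deformations of the line bundle $\cO(rD)$ are what ``preserving $\det(T)$'' kills. A clean way to phrase this is via the determinant morphism on the relevant derived Picard stacks, as is already done in the proof of Theorem \ref{smoothfiber}: the composite $\Trp_{E,E(D)}\to\RR\Perf(C)\to Pic(C)$ recording $\det\Cok$ is insensitive to which of $E$, $E(D)$, $T$ we vary, so the determinant-preserving condition on $T_\ep$ is exactly the determinant-preserving condition on the deformation of $E$ obtained by taking kernels. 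Once this is in place, the obstruction vanishing for lifting $\psi$ and the triviality of the deformation of $E$ are both immediate consequences of the stability vanishing $H^1(\fs)=0$ and $\Ext^1(E,E(D))=0$ already recorded in the paper.
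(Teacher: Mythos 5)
Your proof is correct and follows essentially the same route as the paper's: both arguments lift the surjection $E(D)\twoheadrightarrow T$ to a surjection onto the first-order family $\cT$ using the vanishing of the relevant $H^1$, and then recover $\phi_\ep$ from the kernel, with your explicit verification that $\ker(\psi_\ep)$ is the \emph{trivial} deformation of $E$ (via $H^1(\fs)=0$ and the determinant bookkeeping) being exactly the step the paper folds into its identification of the injective locus of $\PP\Hom(E,E(D))$ with a locally closed subset of the Quot scheme. The only quibble is your justification of the obstruction vanishing: the cleanest reason is simply that $\Ext^1(E(D),T)=H^1(C,\sHom(E(D),T))=0$ because $\sHom(E(D),T)$ is a torsion sheaf on a curve (the appeal to $\Ext^1(E,E(D))=0$ is not needed), which is the same cohomology-and-base-change fact the paper invokes.
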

\begin{proof}
Note that we can identify the open subset of $\PP \Hom(E,E(D))$ consisting of injective $\phi$ with the 
locally closed subset of the Quot-scheme of quotients $\psi:E(D)\to T$ such that $\ker(\psi)$ is semistable and
$\det(T)\simeq \cO(rD)$. Thus, it is enough to check that given a coherent sheaf $\cT$  with $0$-dimensional support
on $C\times S$, where $S=\operatorname{Spec} \CC[\ep]/(\ep^2)$, such that $\cT$ is flat over $S$, and
$T=\cT|_s$ (where $s\in S$ is the unique point) is a quotient of $E(D)$ then there exists a surjection
$p_C^*E(D)\to \cT$, where $p_C:C\times S\to C$ is the projection. By the theorem on cohomology and the base change,
the natural restriction map 
$$H^0(C\times S, p_C^*E^\vee(-D)\otimes\cT)\to H^0(C,E^\vee(-D)\otimes T)$$
is surjective, so we can extend a surjection $E(D)\to T$ to a morphism $\psi:p_C^*E(D)\to \cT$. Since $\psi|_s$ is surjective,
$\psi$ is surjective as well (by Nakayama lemma).
\end{proof}

\begin{proof}[Proof of Theorem \ref{fibrationleaf}]

Note that  the surjectivity of $\delta_{\nu,\Lambda_\nu}$ follows from Theorem \ref{lem_classleaf1}.

By Lemma \ref{surj}, the tangent map to $q$ in diagram \eqref{fiberdig1} is surjective.
This implies that the scheme $M_{\nu,\Lambda_\nu}^{\reg}$ is smooth.
Also, the subscheme $S^{rD}_\nu C\sub S_\nu C$ is smooth as the fiber of the smooth morphism
$AJ_\nu: S_\nu C\to \Pic^{rk}(C)$.

Next, we claim that the morphism $\delta_{\nu,\Lambda_\nu}$ induces surjection on tangent spaces.
Indeed, this follows from its factorization into the composition
$$M_{\nu,\Lambda_\nu}^{reg}\to \TS^{rD}_{\nu,\Lambda_\nu}\to S^{rD}_{\nu,\Lambda_\nu} C \to S^{rD}_\nu C$$
where the last arrow is \'etale, the first arrow is surjective by Lemma \ref{surj}, and the middle arrow admits a section,
$$S^{rD}_{\nu,\Lambda_\nu} C\to \TS^{rD}_{\nu,\Lambda_\nu},$$
given by the family of sheaves \eqref{partition-sheaf-eq}.

Thus, $\delta_{\nu,\Lambda_\nu}$ is a surjective morphism between smooth schemes with surjective tangent map. Therefore it is a smooth morphism.
\end{proof}

\subsection{Products of symplectic leaves}\label{sec:multleaf}
Recall that for any pair of divisors $D$ and $D'$ of degrees $k>0$ and $k'>0$, we have Poisson morphisms 
$A(E,D)\times A(E,D')\to A(E,D+D')$ and
\[
\mu: M(E,D)\times M(E,D')\to M(E,D+D'),
\]
induced by the product map (see Proposition \ref{prod-Poi}). It is clear that $\mu$ sends
$M^{\reg}(E,D)\times M^{\reg}(E,D')$ to $M^{\reg}(E,D+D')$.

The following result might find an application in the study of irreducible representations of quantization of $U(\gl[[t]])$.
Recall that for a torsion sheaf $T\in \TS(C)$ we have the corresponding symplectic leaf $F^c_T$
(defined as the coarse moduli space of the homotopy fiber of $T$ under $q$).

\begin{theorem}\label{multleaf}
Fix $T_1 \in \TS^{rD_1}(C)$ and $T_2 \in \TS^{rD_2}(C)$ such that $F^c_{T_1}$ and $F^c_{T_2}$ are non-empty. 
Let us denote by $[T_1,T_2]$ the set of isomorphism classes of
torsion sheaves $T\in \TS^{r(D_1+D_2)}(C)$ such that there exists a short exact sequence
\begin{equation}\label{torsion-sheaves-ex-seq}
\xymatrix{
0\ar[r] & T_2\ar[r] & T\ar[r] & T_1\ar[r] &0
}
\end{equation}
Then 
$\mu(F^c_{T_1}\times F^c_{T_2})$ is contained in $\bigsqcup_{\substack{T\in [T_1,T_2] \\ l_{\max}(T)\leq r}} F^c_T$. 
Moreover, for each $T\in [T_1,T_2]$ the intersection $\mu(F^c_{T_1}\times F^c_{T_2})\cap F_T^c$ is a nonempty open subset
of $F^c_T$.
In the case when the supports of $T_1$ and $T_2$ are disjoint, the set
$[T_1,T_2]$ consists of a single element $T_1\oplus T_2$
and the map 
\begin{equation}\label{mu-direct-sum-map}
F^c_{T_1}\times F^c_{T_2}\rTo{\mu} F^c_{T_1\oplus T_2}
\end{equation}
is an open embedding.
\end{theorem}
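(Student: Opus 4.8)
The plan is to analyze the multiplication map $\mu$ on the level of cokernels, using the description of $F^c_T$ from Proposition \ref{connected-prop} and its remark. First I would recall that for an injective $\phi_1:E\to E(D_1)$ with $\Cok(\phi_1)\cong T_1$ and an injective $\phi_2:E\to E(D_2)$ with $\Cok(\phi_2)\cong T_2$, the composition $\phi:=\phi_1\circ\phi_2:E\to E(D_1+D_2)$ is injective, and there is a natural short exact sequence relating the three cokernels. Concretely, tensoring $0\to E\to[\phi_2] E(D_2)\to T_2\to 0$ with $\cO(D_1)$ is not quite what we want; instead one uses the snake lemma for the commutative diagram with rows $0\to E\to[\phi] E(D_1+D_2)\to \Cok(\phi)\to 0$ and $0\to E\to[\phi_2] E(D_2)\to T_2\to 0$ and the vertical inclusion $E(D_2)\hookrightarrow E(D_1+D_2)$ (twist of $\phi_1$). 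This yields $0\to T_2\to \Cok(\phi)\to \Cok(\phi_1\otimes\id_{\cO(D_2)})\to 0$, and since $\Cok(\phi_1)\otimes\cO(D_2)\cong T_1$ (the cokernel is torsion, so twisting by a line bundle does not change the isomorphism type after identifying $\sEnd$-twists appropriately), we get $0\to T_2\to T\to T_1\to 0$ with $T=\Cok(\phi)$. Hence $T\in[T_1,T_2]$, and $l_{\max}(T)\le r$ automatically since $T$ is a cokernel of a monomorphism $E\to E(D_1+D_2)$ by Theorem \ref{lem_classleaf1}. This proves the containment $\mu(F^c_{T_1}\times F^c_{T_2})\subset\bigsqcup_{T\in[T_1,T_2],\,l_{\max}(T)\le r}F^c_T$.

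Next I would prove that for each $T\in[T_1,T_2]$ the image $\mu(F^c_{T_1}\times F^c_{T_2})\cap F^c_T$ is a nonempty open subset of $F^c_T$. For openness: the locus in $F^c_T$ of those $\phi:E\to E(D_1+D_2)$ that factor as $\phi_2$ followed by a twist of $\phi_1$ with $\Cok\phi_2\cong T_2$ is the locus where $\phi(E)\supset$ (image of $E(D_2)\hookrightarrow E(D_1+D_2)$ under the sub) — more precisely, where there exists a subsheaf $E\subset E'\subset E(D_1+D_2)$ with $E'\cong E(D_2)$, $E(D_1+D_2)/E'\cong T_1$ (twisted), $E'/E\cong T_2$; such a factorization corresponds to a subsheaf $T_2\subset T$ with $T/T_2\cong T_1$, and the existence of such a sub-torsion-sheaf whose corresponding intermediate bundle is stable is an open condition on the Quot-type parameter space used in Proposition \ref{connected-prop}. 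For nonemptiness: given the fixed short exact sequence $0\to T_2\to T\to T_1\to 0$, one constructs $\phi$ by choosing a general surjection $E(D_1+D_2)\to T$ with stable kernel (which exists since $F^c_T\ne\emptyset$) whose composite $E(D_1+D_2)\to T\to T_1$ also has a subsheaf of the required type; generically the intermediate kernel is stable (being a deformation of $E$, it has the same rank, degree and determinant), so the factorization through stable $E(D_2)$ exists. I would phrase this using Proposition \ref{lem_pushfor} as in the proof of Theorem \ref{lem_classleaf1}: pick the étale cover $\pi:C'\to C$, a line bundle $L'$ with $\pi_*L'\cong E(D_1+D_2)$, and a surjection $L'\to \pi^*$-version of $T$ factoring through an intermediate quotient, then push forward.

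Finally, for the disjoint-support case: if $\mathrm{Supp}(T_1)\cap\mathrm{Supp}(T_2)=\emptyset$, then any extension \eqref{torsion-sheaves-ex-seq} splits (there are no nontrivial extensions between sheaves with disjoint support on a curve, since $\Ext^1$ is computed locally), so $[T_1,T_2]=\{T_1\oplus T_2\}$. To see that $\mu:F^c_{T_1}\times F^c_{T_2}\to F^c_{T_1\oplus T_2}$ is an open embedding, I would note it is injective: given $\phi$ with $\Cok\phi\cong T_1\oplus T_2$, the summands $T_1$, $T_2$ are canonically recovered as the parts of $\Cok\phi$ supported on the disjoint support pieces, hence the factorization $\phi=\phi_1\circ\phi_2$ (equivalently the intermediate subsheaf $E\subset E'\subset E(D_1+D_2)$ with $E(D_1+D_2)/E'$ supported on $\mathrm{Supp}(T_1)$ and $E'/E$ supported on $\mathrm{Supp}(T_2)$) is unique; uniqueness of $\phi_1,\phi_2$ up to scalars then follows because the kernel of a surjection onto a torsion sheaf is unique as a subsheaf. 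Combined with openness of the image (from the previous paragraph) and smoothness of both sides together with a dimension count using Theorem \ref{smoothfiber} — one checks $\dim F^c_{T_1}+\dim F^c_{T_2}=\dim F^c_{T_1\oplus T_2}$ using $h^0(\sHom(T_i,T_j))=0$ for $i\ne j$ when supports are disjoint — an étale injective morphism between smooth varieties of the same dimension is an open embedding.

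I expect the main obstacle to be the careful bookkeeping in the snake-lemma argument identifying the precise short exact sequence $0\to T_2\to \Cok(\phi)\to T_1\to 0$, in particular tracking the line-bundle twists and the identification $\sEnd(E)(D_i)\hookrightarrow\sEnd(E)(D_1+D_2)$ so that $\Cok(\phi_1)$ twisted by $\cO(D_2)$ is genuinely isomorphic to $T_1$ — and, relatedly, proving that the factorization locus is open, which requires working on the Quot-scheme / quotient stack presentation $\ol F_T=\Hom^s(E(D_1+D_2),T)/\Aut(T)$ and showing that "admits a subsheaf isomorphic to $T_2$ with the complementary quotient $\cong T_1$, such that the induced intermediate kernel is stable" cuts out a (nonempty) open subscheme.
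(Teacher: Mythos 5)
Your overall strategy --- reading everything off the cokernels, characterizing the image of $\mu$ by the existence of an intermediate subsheaf $E\subset E'\subset E(D_1+D_2)$ with $E'\simeq E(D_2)$, and reducing to conditions on surjections $E(D_1+D_2)\to T$ --- is the same as the paper's, and the containment and injectivity arguments are fine. But at the two decisive steps the proposal asserts what actually has to be proved. First, openness of $\mu(F^c_{T_1}\times F^c_{T_2})\cap F^c_T$ in $F^c_T$: the condition ``there exists a subsheaf $T_2\hookrightarrow T$ with quotient $\cong T_1$ such that the intermediate kernel is semistable'' is an existential statement, i.e.\ the image under a projection of an open subset of a larger incidence space, and images of open sets are not open in general. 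The paper supplies the missing mechanism by introducing the scheme $\cS\subset\Hom(T_2,T)\times\Hom(T,T_1)$ of pairs $(\iota,\pi)$ with $\iota$ injective, $\pi$ surjective, $\pi\circ\iota=0$, taking the open subset $\cU\subset\cS\times\Hom(E(D_1+D_2),T)$ where $\ker(\psi)$ and $\ker(\pi\psi)$ are semistable, and noting that the projection to $\Hom(E(D_1+D_2),T)$ and the quotient by the free $\Aut(T)$-action are flat, hence open. You correctly flag this as ``the main obstacle,'' but the proposal does not resolve it. Nonemptiness is likewise underspecified: the clean argument is that ``$\ker\psi$ semistable'' and ``$\pi\psi$ surjective with $\ker(\pi\psi)$ semistable'' are each nonempty open conditions on the irreducible space $\Hom(E(D_1+D_2),T)$ (the second using surjectivity of $\psi\mapsto\pi\psi$ together with Theorem \ref{lem_classleaf1} applied to $E(D_2)\hookrightarrow E(D_1+D_2)$), so they intersect; ``generically the intermediate kernel is stable'' is not by itself a proof.

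Second, in the disjoint-support case you conclude by citing ``an \'etale injective morphism between smooth varieties of the same dimension is an open embedding,'' but \'etaleness is never established: injectivity plus equal dimensions does not imply it (the normalization of a cuspidal curve is a bijective, non-\'etale morphism of smooth curves). This can be patched --- e.g.\ by Zariski's main theorem applied to an injective morphism that is dominant onto the smooth connected variety $F^c_{T_1\oplus T_2}$, using the openness of the image and your (correct) dimension count --- but the paper sidesteps the issue by identifying the image explicitly as the open locus of those $\phi$ for which $\ker\bigl(E(D_1+D_2)\to\coker(\phi)\to T_1\bigr)$ is semistable and exhibiting the inverse map there. As written, then, the proposal has the right architecture but genuine gaps at the openness step and at the open-embedding step.
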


\begin{proof}
Let $\phi_1:E\to E(D_1)$ and $\phi_2: E\to E(D_2)$ be two points in $F^c_{T_1}$ and $F^c_{T_2}$. 
Then their composition $\phi_1\circ\phi_2$ is a map from $E$ to $E(D_1+D_2)$. Then $T:=\coker(\phi_1\circ\phi_2)$ 
fits into a short exact sequence of the form
\eqref{torsion-sheaves-ex-seq}, so $T$ is $[T_i,T_j]$.
Also, the condition $l_{\max}(T_{i+j})\leq r$ holds by Theorem \ref{lem_classleaf1}.

To show that for each $T\in [T_1,T_2]$ the intersection $\mu(F^c_{T_1}\times F^c_{T_2})\cap F_T^c$ is nonempty,
let us fix some exact sequence of the form \eqref{torsion-sheaves-ex-seq}, and let $\pi:T\to T_1$ be the surjective map
from this exact sequence. Note that the induced map
$$\Hom(E(D_1+D_2),T)\to \Hom(E(D_1+D_2),T_1):\psi\mapsto \pi\psi$$
is surjective. Now let us define two Zariski open subsets $U_1,U_2\in \Hom(E(D_1+D_2),T)$ as follows: $U_1$ is the set of
surjective $\psi$ such that $\ker(\psi)$ is semistable, and $U_2$ is the set of $\psi$ such that $\pi\psi$ is surjective and
$\ker(\pi\psi)$ is semistable.
Note that $U_1$ is nonempty by Theorem \ref{lem_classleaf1}, and $U_2$ is nonempty by Theorem \ref{lem_classleaf1} and by surjectivity of the map $\psi\mapsto\pi\psi$.
Hence, $U_1\cap U_2$ is nonempty. Now for $\psi\in U_1\cap U_2$, we have $\ker(\psi)\simeq E$ and 
$\ker(\pi\psi)\simeq E(D_2)$. Thus, the composition
$$\phi:E\simeq \ker(\psi)\hookrightarrow \ker(\pi\psi)\hookrightarrow E(D_1+D_2)$$
of the natural embeddings is a point in $\mu(F^c_{T_1}\times F^c_{T_2})\cap F_T^c$.

To show that $\mu(F^c_{T_1}\times F^c_{T_2})\cap F_T^c$ is open in $F_T^c$, let us consider an auxiliary scheme
$\cS$ defined as a locally closed subscheme in $\Hom(T_2,T)\times \Hom(T,T_1)$ consisting of
the maps $\iota:T_2\to T$, $\pi:T\to T_1$ such that $\iota$ is injective, $\pi$ is surjective, and $\pi\circ\iota=0$.
Note that for $(\iota,\pi)\in\cS$ the maps $\iota$ and $\pi$ fit into an exact sequence \eqref{torsion-sheaves-ex-seq}.
Now as above, we have an open subset $\cU\sub \cS\times\Hom(E(D_1+D_2),T)$ consisting of $(\iota,\pi,\psi)$ such
that $\psi$ is surjective, $\ker(\psi)$ is semistable and $\ker(\pi\circ\psi)$ is semistable. Let also
$\cV\sub\Hom(E(D_1+D_2),T))$ be the open subset of $\psi$ such that $\psi$ is surjective and $\ker(\psi)$ is semistable.
As we have seen above, the intersection $\mu(F^c_{T_1}\times F^c_{T_2})\cap F_T^c$ is exactly the image
of the composition 
$$\cU\rTo{p_2} \cV\to F^c_T$$
where the first arrow is induced by the projection $p_2:\cS\times \Hom(E(D_1+D_2),T)\to \Hom(E(D_1+D_2),T)$,
and the second arrow is the natural smooth map corresponding to quotiening by the free action of $\Aut(T)$
(see Proposition \ref{connected-prop}). Thus, both these arrows are flat, and so the image is an open subset of $F^c_T$.

In the case when $T_1$ and $T_2$ have disjoint supports, we have $\Ext^1(T_1,T_2)=0$, so the only element of
$[T_1,T_2]$ is $T_1\oplus T_2$.
We claim that in this case the map \eqref{mu-direct-sum-map}
 induces an isomorphism onto the open subset of $F^c_{T_1\oplus T_2}$
consisting of $\phi:E\to E(D_1+D_2)$ such that the kernel of the composition
$$E(D_1+D_2)\to \coker(\phi)\simeq T_1\oplus T_2\to T_1$$
is semistable. Indeed, it is clear that \eqref{mu-direct-sum-map} factors through this subset. Conversely, if
$\phi$ belongs to this open subset then the above kernel is isomorphic to $E(D_2)$, so we get the required factorization
of $\phi$ into a composition $E\to E(D_2)\to E(D_1+D_2)$.
\end{proof}



\begin{remark}
In general, the open subsets $\mu(F^c_{T_1}\times F^c_{T_2})\cap F^c_T\subset F^c_T$ in the above theorem a proper,
even in the case when $T_1$ and $T_2$ are disjoint.
We will give an example showing this in Section \ref{nonsur-product-sec}.
\end{remark}

\section{Rank two case}\label{sec:rank-2}
In this section, we go through a rank 2 example in details. This example was the first discovered by Sklyanin in his seminal work \cite{Skl82}. We will compute the symplectic leaves using the method developed in previous sections and make a comparison with Sklyanin's original computation. Then we will give a full classification of leaves for the rank 2 case (Theorem \ref{classrk2leaf}).
\subsection{Sklyanin's example}
Let $\wt{C}=\CC/\Gamma$ with $\Gamma:=\ZZ+\ZZ\tau$ and $C=\CC/\left(\frac{1}{2}\Gamma\right)$ be two complex elliptic curves.  And $D$ be the degree 1 divisor corresponding to the neutral element $e\in C$. 
Denote the group of $2$-torsion points of $\wt{C}$ by $\wt{\Gamma}_2:=\left(\frac{1}{2}\Gamma\right)/\Gamma$, which is kernel of the group homomorphism 
\[
\pi_2: \wt{C} \to C.
\]
It factors through a map $q_2: \wt{C}\to \CC/\left(\ZZ+\ZZ\frac{\tau}{2}\right)$. Let $L^\p$ be the degree 1 line bundle on $\CC/\left(\ZZ+\ZZ\frac{\tau}{2}\right)$, whose associated divisor is the origin. There is a natural action of $\wt{\Gamma}_2$ on $q_2^*{L^\p}\otimes H^0(C,q_2^*{L^\p})^\vee$. Its quotient is a vector bundle $E$ on $C$ of rank 2 and degree 1. It is easy to show that $E$ is indecomposable, therefore stable and $\det(E)\cong \cO_{C}(e)$. 
Denote $\wt{D}$ for $\pi_2^{-1}D$, which is equal to $\wt{\Gamma}_2\subset \wt{C}$.

Let $\lambda\in \CC$ be the complex coordinate of $\CC$. Let 
\[
w_1(\lambda)=\rho(k)\frac{1}{\sn(\lambda,k)}, ~~w_2(\lambda)=\rho(k)\frac{\dn(\lambda,k)}{\sn(\lambda,k)}, ~~w_3(\lambda)=\rho(k)\frac{\cn(\lambda,k)}{\sn(\lambda,k)}
\]
be the Jacobi elliptic function with elliptic modulus $k$ associated to $\wt{C}$ and a normalization constant $\rho(k)$ such that $\Res_{0}w_i(\lambda) d\lambda=1$ for $i=1,2,3$. Let
$\{\sigma_a: a=1,2,3\}$ be the $2\times 2$ Pauli matrices. As a convention, we set $w_0=1$ and $\sigma_0=I_2$.  $w_a$ are meromorphic functions on $\wt{C}$ with simple poles at $\wt{\Gamma}_2$.

The vector space $\Hom_{C}(E,E(D))$, identified with the $\wt{\Gamma}_2$-invariants of $H^0(\wt{C},q_2^*L\otimes \cO_{\wt{C}}(\wt{D}))\otimes H^0(\wt{C},q_2^*L)^\vee$, is spanned by $\{I_2, w_a\sigma_a: a=1,2,3\}$. A general meromorphic endomorphism of $E$ with simple pole at $D$ is of the form
\[
\phi(\lambda)=t_0 I_2+\frac{1}{i}\sum_{a=1}^3 t_aw_a(\lambda)\sigma_a, ~~~t_0,t_a\in \CC.
\]
As elements of $H^0(C,\sEnd(E)(D))$, $w_a(\lambda)\sigma_a$ has simple pole at $e\in C$. For our convienence, we identify the index set of $a$ with $\wt{\Gamma}_2$ and set $w_e(\lambda)=1$ and $J_{ae}=1$ for all $a\in\wt{\Gamma}_2$.

The functions $w_a(\lambda)$ satisfies the quadratic relations
\[
w_a^2(\lambda)-w_b^2(\lambda)=J_{ba}, ~~a,b=1,2,3,
\]
where $J_{ba}$ are constants determined by $k$ and $\rho(k)$.  Using these relations, it is easy to compute that 
\begin{align*}
\det(\phi(\lambda))
&=t_0^2+J_{31}t_1^2+J_{32}t_2^2+(t_1^2+t_2^2+t_3^2)w_3^2(\lambda).
\end{align*}
By Proposition \ref{detcasimir}, determinant  defines  a Casimir map from $A(E,D)\cong \AA^4$ equipped with the Poisson structure \eqref{Poi-3}, to $\AA^2$.
Identifying $t_0,\ldots,t_3$ with the coordinate functions on $\AA^4$, the determinant map is
\begin{align}\label{detSkl}
(t_0,\ldots,t_3)\mapsto (t_0^2+J_{31}t_1^2+J_{32}t_2^2,t_1^2+t_2^2+t_3^2).
\end{align}
The common zero of $t_0^2+J_{31}t_1^2+J_{32}t_2^2$ and $t_1^2+t_2^2+t_3^2$, denoted by $Z$, is isomorphic to $C$. 

Now we pass to the projective case.
Consider the map $q: M(E,D)\to \coh(C)\times\coh(C)$ by
\[
T=(E,E(D),\phi)\mapsto (\Ker(\phi),\Cok(\phi)).
\]
Clearly, $\phi$ is injective if and only if $\det(\phi(\lambda))\neq 0$.

Assume that $\det(\phi(\lambda))\neq 0$. Then it vanishes at $p$ and $q$ such that $p+q$ is lienarly equivalent with $2e$. When $p\neq q$,  $\Cok(\phi)$ must be $\cO_p\oplus\cO_q$. If  $p=q$ then $p$ is a point in the 2-torsion subgroup $\Gamma_2\subset C$, and $\Cok(\phi)$ is either $\cO_p\oplus \cO_p$ or $\cO_{2p}$. There are three types of leaves when $\phi$ is injective.
\begin{enumerate}
\item[(1)] For $p\neq q$, $q^{-1}(\cO_p\oplus\cO_{q})$ is the hypersurface defined by $t_0^2+J_{12}t_2^2+J_{13}t_3^2+\mu(t_1^2+t_2^2+t_3^2)$ for a general $\mu$ in $\PP^1$ without 4 points, taking away $Z$;
\item[(2)] For $a\in \Gamma_2$, $q^{-1}(\cO_{2a})$ is the hypersurface defined by $\sum_{b\neq a} J_{ab} t_b^2$, taking away  $Z\cup P_a$ where $P_a$ is the point with $t_b=0$ for $b\neq a$.
\item[(3)] For $a\in\Gamma_2$, $q^{-1}(\cO_a\oplus \cO_a)$ is the projectivization of the subspace spanned by $w_a(\lambda)\sigma_a$, i.e. a 0-dimensional leaf consisting of a single point. 
\end{enumerate}

Assume that $\det(\phi(\lambda))=0$, then $\phi$ factors through a line bundle $Q$
\[
\phi: E\to Q\to E(D).
\]
Because $\mu(E)=1/2$ and $\mu(E(D))=3/2$, by stability $Q$ must be of degree 1. And the kernel $K$ of the map $E\to Q$ has degree 0. Since $\Hom(E,Q)=\CC$ and $\Hom(Q,E(D))=\CC$, $Q$ (therefore $K$) determines $\phi$ up to isomorphisms.
\begin{enumerate}
\item[(4)] For $K\in \Pic^0(C)$, the fiber of $q$ is a 0-dimensional leaf consisting of a single point on $Z$.
\end{enumerate}
This gives the complete list of symplectic leaves of $M(E,D)$.

\subsection{Classification of leaves for rank two case}
Without loss of generality, we assume that $E$ is a stable vector bundle of rank 2 and degree 1 on $C$ and $D$ be an effective divisor of degree $1$. The goal of this subsection is to classify all symplectic leaves of the Poisson manifolds $M(E,nD)$ for any $n\geq 0$.

\begin{theorem}\label{classrk2leaf}
Let $E$ and $D$ be defined as above. For any $\phi:E\to E(nD)$, one of the following must hold:
\begin{enumerate}
\item[$(1)$] $\phi$ is injective, and $\Cok(\phi)$ is a torsion sheaf satisfying $l_{\max}(\Cok(\phi))\leq 2$ and $\det(\Cok(\phi))\cong \cO(2nD)$.
\item[$(2)$] $\Cok(\phi)\cong L\oplus T$ so that $T$ is a torsion sheaf satisfying $l_{\max}(T)\leq 1$, and $L$ is a line bundle of degree $d(L)$ satisfying 
$$2n+1/2-l(T)>d(L)> n+1/2,$$
 where $l(T)$ refers to length of $T$.
\end{enumerate}
Conversely, suppose a torsion sheaf $T$ (possibly zero) and a line bundle $L$ (possibly zero) satisfy one of the following 
\begin{enumerate}
\item[$(1^\p)$] $L$ is the zero sheaf, and $T$ is a torsion sheaf satisfying $l_{\max}(T)\leq 2$ and $\det(T)\cong \cO(2nD)$.
\item[$(2^\p)$] $T$ is a torsion sheaf satisfying $l_{\max}(T)\leq 1$, and $L$ is a line bundle of degree $d(L)$ satisfying 
$$2n+1/2-l(T)>d(L)> n+1/2,$$ 
\end{enumerate}
there exists a morphism $\phi: E\to E(nD)$ such that $\Cok(\phi)\cong T\oplus L$.
\end{theorem}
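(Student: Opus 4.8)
The plan is to study a nonzero morphism $\phi:E\to E(nD)$ (recall $M(E,nD)$ parametrizes nonzero maps) through the canonical factorization $E\twoheadrightarrow\im(\phi)\hookrightarrow E(nD)$, playing off stability of $E$ (which has slope $1/2$) against stability of $E(nD)$, which is stable of rank $2$ and degree $1+2n$, hence of slope $n+1/2$. For the direct statement: if $\phi$ is injective, then $\Cok(\phi)$ is torsion for rank reasons, and the ``only if'' direction of Theorem~\ref{lem_classleaf1}, applied with $r=2$ and with the effective divisor $nD$ in place of $D$, gives $l_{\max}(\Cok(\phi))\le 2$ and $\det(\Cok(\phi))\cong\cO(2nD)$; this is case $(1)$.

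If $\phi\neq 0$ is not injective, then $K:=\ker(\phi)$ is a rank-$1$ torsion-free subsheaf of $E$, hence a line bundle, and it is saturated because $E/K\cong\im(\phi)$ embeds into the torsion-free sheaf $E(nD)$; thus $Q:=\im(\phi)$ is a line bundle with $\deg K+\deg Q=1$, and stability of $E$ forces $\deg K\le 0$, i.e.\ $\deg Q\ge 1$. Let $\bar Q\subseteq E(nD)$ be the saturation of $Q$. Then $T:=\bar Q/Q$, a quotient of one line bundle by another, is of the form $\cO_{D'}$ for an effective divisor $D'$, so $l_{\max}(T)\le 1$, while $L:=E(nD)/\bar Q$ is a line bundle; the sequence $0\to T\to\Cok(\phi)\to L\to 0$ splits since $\Ext^1(L,T)=H^1(C,L^\vee\otimes T)=0$ (because $L^\vee\otimes T$ is torsion), so $\Cok(\phi)\cong L\oplus T$. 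Finally stability of $E(nD)$ gives $\deg\bar Q\le n$, hence $1\le\deg Q\le n-l(T)$, and since $d(L)=1+2n-\deg\bar Q=1+2n-\deg Q-l(T)$ this yields $n+1\le d(L)\le 2n-l(T)$, which are exactly the inequalities in case $(2)$.

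For the converse, case $(1')$ is immediate from the ``if'' direction of Theorem~\ref{lem_classleaf1} (again with $r=2$ and divisor $nD$). For case $(2')$ set $d:=d(L)\in\{n+1,\dots,2n-l(T)\}$; since $l_{\max}(T)\le 1$ we have $T\cong\cO_{D_T}$ for an effective divisor $D_T$ with $\deg D_T=l(T)$. I would then build $\phi$ in three steps. First, produce a surjection $E(nD)\twoheadrightarrow L$: here $\Hom(E(nD),L)=H^0(C,E(nD)^\vee\otimes L)$, and $E(nD)^\vee\otimes L$ is stable of degree $2d-2n-1\ge 1$, so when $d\ge n+2$ it has slope $>1$, is therefore globally generated, and a general section of this rank-$2$ bundle is nowhere vanishing; when $d=n+1$ it has degree $1$ and a one-dimensional space of sections, and the unique section is nowhere vanishing since a zero of it would produce a line subbundle of degree $\ge 1$, contradicting stability. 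Let $\bar Q$ be the kernel of the chosen surjection, a line bundle of degree $1+2n-d$. Second, set $Q:=\bar Q(-D_T)\subseteq\bar Q\subseteq E(nD)$, so that $\bar Q/Q\cong\cO_{D_T}\cong T$ and, just as in the direct part, $E(nD)/Q$ is an extension of $L$ by $T$ that splits; note $\deg Q=1+2n-d-l(T)\ge 1$. Third, produce a surjection $E\twoheadrightarrow Q$: the same argument applied to $E^\vee\otimes Q$, which is stable of degree $2\deg Q-1\ge 1$ (globally generated when $\deg Q\ge 2$, with nowhere-vanishing unique section when $\deg Q=1$), gives one. Then $\phi:E\twoheadrightarrow Q\hookrightarrow E(nD)$ is not injective and $\Cok(\phi)=E(nD)/Q\cong L\oplus T$.

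The routine ingredients are that the kernel and image of a morphism of sheaves into a vector bundle are automatically saturated subsheaves, that the torsion subsheaf of a rank-$1$ coherent sheaf on the smooth curve $C$ splits off as a direct summand, and the elementary cohomology computations on the elliptic curve. The genuine content --- and the step I expect to be the main obstacle --- is the existence of the surjections onto $L$ and onto $Q$ in the converse: this is precisely where the geometry of the elliptic curve enters, via global generation of stable bundles of slope $>1$ together with the special behaviour of stable rank-$2$ bundles of degree $1$, which have a one-dimensional space of sections, the unique section being necessarily nowhere vanishing. Once these surjections are in hand, matching the numerical conditions on $d(L)$ with the bounds $\deg Q\ge 1$ and $\deg\bar Q\le n$ is pure bookkeeping.
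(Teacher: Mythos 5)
Your proposal is correct and follows essentially the same route as the paper's proof: factor a non-injective $\phi$ through its line-bundle image, use stability of $E$ and $E(nD)$ to bound the degrees, and for the converse build $\phi$ as $E\twoheadrightarrow K\hookrightarrow E(nD)$ with $K=G(-D_T)$ for $G=\ker(E(nD)\to L)$. The only difference is that you spell out the existence of the surjections onto $L$ and onto $Q$ (via global generation of stable bundles of slope $>1$ and the nowhere-vanishing unique section in the degree-one case), which the paper asserts directly from stability.
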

\begin{proof}
When $\phi$ is injective,  condition $(1)$ clearly holds. 

Suppose $\phi$ is not injective.
Because $\phi\neq 0$, $\Cok(\phi)$ is isomorphic to $L\oplus T$, where $L$ is a line bundle and $T$ is a torsion sheaf (possibly zero). Denote $K$ for the kernel of the surjection $E(nD)\to L\oplus T$. Let $G$ be the kernel of the surjection $E(nD)\to L$. We have a short exact sequence
\[
\xymatrix{
K\ar[r] & G\ar[r] & T
}\]
Both $K$ and $G$ are line bundles and the composition $E\to K\to G$ is nonzero.
 Because $L$ is a quotient sheaf of $E(nD)$, the inequality $d(L)>n+1/2$ holds. From the above exact sequence, we obtain $d(K)=2n+1-l(T)-d(L)$. The other half of the inequality follows from the slope inequality $\mu(K)>\mu(E)$ because $K$ is a quotient of $E$.

Now we prove the reverse direction. When $T$ satisfies condition $(1^\p)$, the claim follows from Theorem \ref{lem_classleaf1}. Suppose condition $(2^\p)$ holds. The stability implies that there exists a surjection $E(nD)\to L$ with kernel $G$. Because $l_{\max}(T)\leq 1$, we may choose a surjection $G\to T$ with kernel $K$. Clearly, the quotient $E(nD)/K$ is isomorphic to $L\oplus T$. Moreover, by inequality $(2^\p)$ there exists a surjection $E\to K$. By composing it with $K\to E(nD)$, we get the desired map $\phi$.
\end{proof}
In the rank 2 case, the kernel of $\phi$ is a line bundle if it is nonzero. Therefore, it is determined by $\Cok~\phi$ via a determinant calculation. 

\subsection{Example of a non-surjective product map}\label{nonsur-product-sec}

As before, let $E$ be a stable vector bundle of rank $2$ and degree $1$, and $D$ be a divisor of degree 1.
We want to construct an example of two symplectic leaves
$F^c_T$ and $F^c_{T'}$ in $\PP \Hom(E,E(D))$,
corresponding to torsion sheaves of length $2$, $T$ and $T'$, with disjoint support,
such that the product $F^c_T\cdot F^c_{T'}$ is a proper subset of $F^c_{T\oplus T'}$,
i.e., the corresponding map \eqref{mu-direct-sum-map} is not surjective.

Set $L=\det E$. It is well known that $E$ fits into an exact sequence 
$$0\to \cO\to E\rTo{p} L\to 0$$
We have the induced exact sequence
$$0\to \Hom(L,L(D))\to \Hom(E,L(D))\to \Hom(\cO,L(D))\to 0$$
Thus, we can choose an element $\alpha\in \Hom(E,L(D))$ such that its restriction to $\cO$
is any nonzero element $s\in H^0(L(D))$. Then the morphism 
$$f=(p,\alpha):E\to L\oplus L(D)$$
is injective. 
Similarly (e.g., using duality) we can construct an injective morphism
$$g:L\oplus L(D)\to E(2D).$$
Futhermore, the cokernel of $f$ is isomorphic to $\cO_{Z(s)}$, where $Z(s)$ is the divisor of 
zeros of the section $s\in H^0(L(D))$, while $\coker(g)$ is of the form $\cO_{Z(s')}$, where $s'$ is a global section of
$L^{-1}(3D)$. Thus, we can choose $f$ and $g$
in such a way that $T=\coker(f)$ and $T'=\coker(g)$ have disjoint support

Now the composition $\phi:=g\circ f$ gives an injection $E\to E(2D)$ and its cokernel fits into an exact sequence
$$0\to \coker(f)\to \coker(\phi)\to \coker(g)\to 0$$
Now we claim that $\phi\in \Hom(E,E(2D))$ cannot be decomposed as a composition $E\rTo{f'} E(D)\rTo{g'} E(2D)$,
with $\coker(f')\simeq T$ and $\coker(g')\simeq T'$. 
Indeed, otherwise in the corresponding exact sequence
$$0\to \coker(f')\to \coker(\phi)\to \coker(g')\to 0$$
the map $T\oplus T'\simeq \coker(\phi)\to \coker(g')\simeq T'$
would differ from the standard projection $T\oplus T'\to T'$ by an automorphism of $T'$,
so we would get
$$E(D)\simeq \ker(E(2D)\to \coker(g'))\simeq \ker(E(2D)\to \coker(g))\simeq L\oplus L(D),$$
which is a contradiction.

\section{Relation to Feigin-Odesskii Poisson structures}\label{sec:Rel-FO}

In \cite{FO95}, Feigin and Odesskii constructed quadratic Poisson structures on $\PP\Ext^1(\xi,\cO_C)$ for any stable vector bundle $\xi$.
In this section, we show that there exists a Poisson isomorphism between $M(E,D)$ and $\PP\Ext^1(\xi,\cO_C)$, with an appropriate choice of $\xi$. Recall that the Poisson structure on the space $\PP\Ext^1(F_0,F_1)$, where $F_0$ and $F_1$
are stable vector bundles, is obtained by identifying this space with the moduli space of
complexes of the form $[F_0\rTo{\phi} F]$ with $\ker(\phi)=0$ and $\coker(\phi)\simeq F_1$ (see \cite[Sec.\ 5]{HP17}
and Theorem \ref{HPmainthm}).
The Poisson bracket of Feigin-Odesskii coincides with the classical shadow of the 0-shifted Poisson structure of Theorem
\ref{HPmainthm} in the case of $F_0=\cO_C$ and $F_1\cong \xi$. 
We adopt the notation of \cite{FO95} to denote this Poisson manifold by $q_{d,r}(\xi)$,
where $\xi$ is a stable vector bundle of rank $r$ and degree $d$.

The following result is a generalization of \cite[Prop.\ 4.1]{Pol98} (we also fill in some details omitted in \cite{Pol98}).

\begin{theorem}\label{FM-thm}
Let $\Phi$ be an autoequivalence of $\D^b(\coh(C))$, which, up to a shift, is isomorphic to the composition of spherical twists
and their inverses.
Let $E_0$ and $E_1$ be a pair of stable vector bundles such that $\mu(E_0)<\mu(E_1)$ and $\rk(E_0)\le \rk(E_1)$.
Assume that $\Phi(E_0)=F_0$, $\Phi(E_1)=F_1[1]$ for some stable bundles $F_0$ and $F_1$.
Then the isomorphism
$$\PP \Hom(E_0,E_1)\rTo{\sim} \PP \Ext^1(F_0,F_1)$$
induced by $\Phi$ is compatible with the Poisson structures.
\end{theorem}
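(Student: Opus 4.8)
The plan is to identify both Poisson bivectors with triple Massey products in $\D^b(\coh(C))$, and then deduce compatibility from the fact that $\Phi$ preserves this structure. Fix a nonzero $\phi\in\Hom(E_0,E_1)$; since $\mu(E_0)<\mu(E_1)$ I may assume $\phi$ is injective as a map of sheaves (an open dense condition, on which the bivector is already determined, since a section of $\bigwedge^2\TT$ over the irreducible variety $\PP\Hom(E_0,E_1)$ is determined on a dense open). The tangent space of $\PP\Hom(E_0,E_1)$ at $[\phi]$ is $\Hom(E_0,E_1)/\langle\phi\rangle$, and Serre duality on $C$ (using $\omega_C\cong\cO_C$) identifies its dual with $\{\psi\in\Ext^1(E_1,E_0):\tr(\psi\circ\phi)=0\}$. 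Since $E_0,E_1$ are stable, $\Ext^1(E_i,E_i)\cong\CC$ via $\tr$, so for such a cotangent vector $\psi$ both Yoneda products $\psi\circ\phi\in\Ext^1(E_0,E_0)$ and $\phi[1]\circ\psi\in\Ext^1(E_1,E_1)$ vanish; hence the triple Massey product $\langle\phi,\psi,\phi\rangle$ is defined, and because $\Hom(E_i,E_i)=\CC$ its indeterminacy is exactly $\langle\phi\rangle$, so it is a well-defined element of $T_{[\phi]}\PP\Hom(E_0,E_1)$. The first thing to establish is that the bivector of \cite{HP17} on $M(E_0,E_1):=\PP\Hom(E_0,E_1)$ equals the map $\psi\mapsto\langle\phi,\psi,\phi\rangle$.

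This is the substantive step, and it is the point left implicit in \cite[Prop.~4.1]{Pol98}. I would prove it by rereading formula \eqref{Poi-0}, equivalently \eqref{Poi-2} and \eqref{Poi-3}: the expression $-P_+(\pr(\phi\psi_\pm))\phi+\phi P_+(\pr(\psi_\pm\phi))$ is precisely the recipe for a triple Massey product computed in the two-term Cech model attached to the cover $(U_+,U_-)$, with $P_+(\pr(\phi\psi_\pm))$ and $P_+(\pr(\psi_\pm\phi))$ the homotopies trivializing $\phi\psi$ and $\psi\phi$ (these exist and are unique precisely because $H^0(\sEnd_0 E_i)=H^1(\sEnd_0 E_i)=0$ by stability). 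Running the analogous computation for the realization of $\PP\Ext^1(F_0,F_1)$ as the moduli of complexes $[F_0\rTo{\psi}F]$ with $\ker\psi=0$ and $\coker\psi\cong F_1$ (as in \cite[Sec.~5]{HP17}) should show that the Feigin--Odesskii bivector at $[e]\in\PP\Ext^1(F_0,F_1)=\PP\Hom(F_0,F_1[1])$ is the triple Massey product $\eta\mapsto\langle e,\eta,e\rangle$ of the objects $F_0$ and $F_1[1]$. Keeping careful track of signs, and of the passage to the projectivization (the quotient by $\langle\phi\rangle$ on tangent vectors and the orthogonality constraint on covectors) across these two rather different presentations, is where I expect the main obstacle to lie.

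Granting this, the conclusion is formal. The isomorphism induced by $\Phi$ is $\phi\mapsto\Phi(\phi)$ under $\Hom(E_0,E_1)\cong\Hom(F_0,F_1[1])=\Ext^1(F_0,F_1)$; correspondingly $\Phi$ identifies $\Ext^1(E_1,E_0)$ with $\Hom(F_1,F_0)$. As a triangulated autoequivalence of $\D^b(\coh(C))$, $\Phi$ preserves triple Massey products, so $\Phi\langle\phi,\psi,\phi\rangle=\langle\Phi\phi,\Phi\psi,\Phi\phi\rangle$; and since $\Phi$ commutes with the Serre functor (which on $C$ is $(-)[1]$) it carries the Serre trace pairing of $C$ to a nonzero scalar multiple of itself, hence matches the $\phi$-orthogonal cotangent subspace of $\Ext^1(E_1,E_0)$ with the $\Phi(\phi)$-orthogonal one on the other side. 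Therefore $\Phi$ intertwines the bivector $\langle\phi,-,\phi\rangle$ with $\langle\Phi\phi,-,\Phi\phi\rangle$, i.e.\ the induced map on projectivized $\Hom$-spaces is Poisson. The hypotheses that $\Phi$ is, up to shift, a composition of spherical twists, that $\mu(E_0)<\mu(E_1)$, and that $\rk(E_0)\le\rk(E_1)$, are used only to guarantee that $F_0$ and $F_1$ are again honest stable bundles and that the complexes in sight are injective, so that both sides genuinely carry the Poisson structures under comparison; the compatibility itself requires no more than that $\Phi$ is an autoequivalence.
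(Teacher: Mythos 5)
Your strategy---identify both bivectors with the triple Massey product $\psi\mapsto\langle\phi,\psi,\phi\rangle$ and then invoke functoriality of Massey products under the exact equivalence $\Phi$---is a genuinely different route from the paper's, and it is a viable one (it is essentially the viewpoint the authors adopt in later work). The paper instead proceeds by an explicit diagram chase: it introduces auxiliary spaces $U'\sub\PP\Hom(E_1,E)$ and $V'\sub\PP\Hom(F_1,F)$ (where $E=\coker(\phi_0)$, $F=\Phi(E)[-1]$), builds Serre-duality isomorphisms $\alpha:T^*_{\phi_0}U\to T_{\psi_0}U'$ and $\beta:T^*_{e_0}V\to T_{f_0}V'$, and verifies that the two rows of a commutative square compute $\Pi^U$ and $\Pi^V$ via chain-level computations with the complexes $\sEnd\oplus\sEnd\to\sHom$ and $\sEnd(F,F_1)$. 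Your reading of \eqref{Poi-3} as a Cech-model Massey product, with $P_\pm(\pr(\cdot))$ as the trivializing homotopies (existing uniquely by $H^i(\fs)=0$), is correct, and so is the computation of the indeterminacy as $\langle\phi\rangle$ using simplicity of $E_0,E_1$.

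That said, there are two real gaps. First, the identification of the Feigin--Odesskii bivector on $\PP\Ext^1(F_0,F_1)$ with $\eta\mapsto\langle e,\eta,e\rangle$ is asserted, not proved, and you correctly flag it as the main obstacle: the Poisson structure there is defined through the moduli of complexes $[F_0\to F]$, whose tangent space is $H^1(\sEnd(F,F_1))$ rather than a mapping cone of the naive kind, so one must trace the bivector through the boundary map $\delta$ of $0\to\sEnd(F,F_1)\to\sEnd(F)\to\sHom(F_1,F_0)\to 0$ and the embedding $\PP\Ext^1(F_0,F_1)\hookrightarrow\cM'$. This is precisely the content of the second half of the paper's proof (and of Lemma \ref{tangent-map-comp-lem}); without it your argument only establishes compatibility of two structures one of which has not been shown to be the one in the theorem. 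Second, your Serre-duality step is too weak as stated: knowing that $\Phi$ carries the trace pairing to a \emph{nonzero scalar multiple} of itself does match the orthogonal cotangent subspaces, but it would only give $\Phi_*\Pi^U=c\cdot\Pi^V$ for some $c\neq 0$. Exact compatibility requires $c=1$, which is not formal for an arbitrary autoequivalence; the paper proves it (Lemma \ref{FM-lem}) by showing that a spherical twist acts on $HH_*(C)$ by $x\mapsto x-\langle\ch(E),x\rangle\ch(E)$ and hence trivially on $HH_1(C)$, using the hypothesis that $\Phi$ is, up to shift, a composition of spherical twists. So that hypothesis is not only about keeping $F_0,F_1$ stable bundles; it is also what pins down the scalar.
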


Before proving the theorem, we point out the corollary relevant for our setup.

\begin{corollary}\label{FM-cor}
Fix a stable vector bundle $E$ of rank $r$ and degree $d$ so that $0<d<r$ and $\gcd(r,d)=1$, and an effective divisor $D$ of degree $k$. Let $m,n$ be the pair of integers so that $mr+nd=1$ and $n$ is the smallest positive integer satisfying this. There exists a stable vector bundle $\xi$ of rank $rkn-1$ and degree $r^2k$ so that $M(E,D)$ is isomorphic to $q_{r^2k,rkn-1}(\xi)$ as Poisson manifolds.
\end{corollary}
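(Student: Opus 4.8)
The plan is to deduce the corollary from Theorem~\ref{FM-thm}, applied to the pair $(E_0,E_1)=(E,E(D))$. Its hypotheses hold: $\mu(E)=d/r<(d+rk)/r=\mu(E(D))$ since $k>0$, and $\rk E=\rk E(D)=r$. So the only thing to produce is an autoequivalence $\Phi$ of $\D^b(\coh(C))$ which, up to a shift, is a composition of spherical twists and their inverses, such that $\Phi(E)$ is a stable vector bundle and $\Phi(E(D))\cong\cO_C[1]$. Granting such a $\Phi$, Theorem~\ref{FM-thm} identifies $M(E,D)=\PP\Hom(E,E(D))$ with $\PP\Ext^1(\Phi(E),\cO_C)$ as Poisson manifolds, and it remains to record that $\Phi(E)$ has rank $rkn-1$ and degree $r^2k$, so that this target is $q_{r^2k,rkn-1}(\Phi(E))$.

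To build $\Phi$ I would use the following facts about an elliptic curve $C$: up to a shift, the spherical objects of $\D^b(\coh(C))$ are exactly the stable vector bundles and the point sheaves $\cO_x$, and the numerical class $(\rk,\deg)\in K_0^{num}(C)\cong\ZZ^2$ determines which alternative occurs — a class of nonzero rank forces a stable bundle (this is Atiyah's classification of simple sheaves). Moreover $T_{\cO_x}\cong(-\otimes\cO(x))$, so $T_{\cO_x}$ and $T_{\cO_C}$ act on $K_0^{num}(C)$ by $(\rk,\deg)\mapsto(\rk,\deg+\rk)$ and $(\rk,\deg)\mapsto(\rk-\deg,\deg)$; these two transformations of $\ZZ^2$ generate $\SL_2(\ZZ)$, and $-\otimes\cO(y_1-y_2)=T_{\cO_{y_1}}\circ T_{\cO_{y_2}}^{-1}$, so twisting by any degree-$0$ line bundle is again a composition of spherical twists and their inverses.

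Since $\gcd(r,d)=1$, the class $(r,\,d+rk)$ of $E(D)$ is primitive; with $m,n$ as in the statement, the matrix $g\in\SL_2(\ZZ)$ with rows $(kn-m,-n)$ and $(d+rk,-r)$ (the relation $mr+nd=1$ making $\det g=1$) carries $(r,\,d+rk)$ to $(-1,0)$ and $(r,d)$ to $(rkn-1,\,r^2k)$. Writing $g$ as a word in the two generators above and letting $\Phi_0$ be the corresponding composition of $T_{\cO_x}^{\pm1}$ and $T_{\cO_C}^{\pm1}$, we obtain an autoequivalence acting on $K_0^{num}(C)$ by $g$. Then $\Phi_0(E)$ is a spherical object of class $(rkn-1,\,r^2k)$; as the rank is positive it equals $V[2j]$ for a stable bundle $V$ of this class and some $j\in\ZZ$ (the shift is even because the rank is positive), and $\Phi_0(E(D))$ is a spherical object of class $(-1,0)$, hence equals $L_0[2j'+1]$ for a degree-$0$ line bundle $L_0$. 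The nonzero inclusion $E\hookrightarrow E(D)$ gives $\Ext^{2(j'-j)+1}(V,L_0)=\Hom_{\D^b}(V[2j],L_0[2j'+1])\neq0$, and since $\Ext^i$ on a curve vanishes for $i\notin\{0,1\}$ this forces $j'=j$. Now set $\Phi:=[-2j]\circ(-\otimes L_0^{-1})\circ\Phi_0$; up to the shift $[-2j]$ it is a composition of spherical twists and their inverses, $\Phi(E)=V\otimes L_0^{-1}=:\xi$ is stable of rank $rkn-1$ and degree $r^2k$, and $\Phi(E(D))=\cO_C[1]$. Theorem~\ref{FM-thm} then yields the Poisson isomorphism $M(E,D)\cong q_{r^2k,rkn-1}(\xi)$.

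The step I expect to be the main obstacle is the passage, in the previous paragraph, from the prescribed action of $\Phi_0$ on $K_0^{num}(C)$ to the actual images of the \emph{sheaves} $E$ and $E(D)$: one must be sure that $\Phi_0$ sends them to shifts of an honest stable bundle and an honest line bundle, rather than to torsion objects or non-split complexes, and then pin down the cohomological shifts. This is precisely where the classification of spherical objects of $\D^b(\coh(C))$ is used, the shift matching being the short $\Ext$-vanishing argument above. As for the hypotheses, $\gcd(r,d)=1$ is what makes the relevant classes primitive (and ensures that $E$ itself exists), the minimality of $n$ together with $0<d<r$ selects the particular bundle $\xi$ of rank $rkn-1$, and $\deg D>0$ is what makes Theorem~\ref{FM-thm} applicable.
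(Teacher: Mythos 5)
Your proposal is correct and follows essentially the same route as the paper: reduce to Theorem \ref{FM-thm} and construct $\Phi$ as a composition of spherical twists (up to shift and a degree-zero line-bundle twist) realizing a suitable element of $\SL_2(\ZZ)$ on numerical classes. The only difference is cosmetic — you exhibit the matrix $g$ explicitly and carefully normalize the residual shift and degree-zero line bundle, whereas the paper establishes the existence of the $\SL_2(\ZZ)$ element via the orbit invariants $\det(v_1,v_2)$ and $\alpha(v_1,v_2)$ and leaves that bookkeeping implicit (spelling out an explicit $\Phi$ only in the subsequent remark).
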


\begin{proof}
By Theorem \ref{FM-thm},
it is enough to construct an autoequivalence $\Phi$ of $\D^b(\coh(C))$, which is a composition of spherical twists, such that 
\[
\Phi(E)\simeq\xi,~~~\Phi(E(D))\simeq\cO_C[1].
\]
To see that such an autoequivalence exists, first, let us consider this problem on the level of discrete invariants.
The vectors $(\deg, \rk)$ for $E$ and $E(D)$ have form $(r,d)$ and $(r,d+kr)$. While these vectors for $\xi$ and $\cO_C[1]$
are $(rkn-1,r^2k)$ and $(-1,0)$. It is well known that the $\SL_2(\ZZ)$-orbit of a pair of vectors $v_1,v_2$ is determined
by two invariants: $\det(v_1,v_2)$ and the invariant $\alpha(v_1,v_2)\in (\ZZ/\det(v_1,v_2)\ZZ)^*$ such that
$$v_1\equiv \alpha(v_1,v_2)v_2\mod \det(v_1,v_2)\cdot\ZZ^2$$
It is easy to check that for both pairs the determinant is $kr^2$, while $\alpha\equiv 1-rkn$. 
Thus, there exists an element of $\SL_2(\ZZ)$ sending the pair $(r,d), (r,d+kr)$ to $(rkn-1,r^2k), (-1,0)$.
We can realize it by a composition of spherical twists, $\Phi$, so that $\Phi(E(D))\simeq \cO_C[1]$. Then $\Phi(E)$ is necessarily
a vector bundle of the required form.
\end{proof}

\begin{lemma}\label{FM-lem}
Let $\Phi$ be an autoequivalence of $\D^b(\coh(C))$, which is a composition of spherical relections, their inverses, and of a shift.
Then $\Phi$ is compatible with the 1-Calabi-Yau structure on $\D^b(\coh(C))$ 
(coming from a fixed trivialization $\omega_C\simeq \cO_C$):
the composition
$$\Hom(\Phi(A),\Phi(B))^*\simeq\Hom(A,B)^*\rTo{SD(A,B)} \Hom(B,A[1])\simeq \Hom(\Phi(B),\Phi(A)[1])$$
coincides with the Serre duality isomorphism $SD(\Phi(A),\Phi(B))$.
\end{lemma}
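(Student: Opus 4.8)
The plan is to reduce the statement to a single spherical twist and to settle that case by exhibiting a self-duality of the Fourier--Mukai kernel. Call an autoequivalence $\Phi$ of $\D^b(\coh(C))$ \emph{Serre-compatible} if it satisfies the conclusion of the lemma. A diagram chase shows that Serre-compatible autoequivalences form a subgroup of $\operatorname{Auteq}(\D^b(\coh(C)))$: the identity is Serre-compatible; pasting the Serre-compatibility square for $\Phi_1$ with that for $\Phi_2$ (the latter evaluated at $\Phi_2A,\Phi_2B$) shows $\Phi_1\circ\Phi_2$ is Serre-compatible whenever $\Phi_1,\Phi_2$ are; and running the same square backwards shows $\Phi^{-1}$ is Serre-compatible whenever $\Phi$ is. The shift $[1]$ is Serre-compatible because $S=(-)\otimes\omega_C[1]$ and $[1]$ commute on the nose and the chosen isomorphism $S\simeq[1]$ comes from $\omega_C\simeq\cO_C$. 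Hence it suffices to prove that each spherical twist $T_\cE$ is Serre-compatible, $\cE$ a spherical object of $\D^b(\coh(C))$.

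I would first dispose of the twists whose compatibility is transparent: for $x\in C$ the twist $T_{\cO_x}$ is isomorphic to $(-)\otimes\cO_C(x)$, and tensoring by a line bundle commutes with $S$ and with all the $\Hom$-identifications in the compatibility square (using $\omega_C\simeq\cO_C$), so it is Serre-compatible; likewise any automorphism of $C$ acts on $\D^b(\coh(C))$ as an isometry of the fixed $1$-Calabi--Yau structure. For a general spherical $\cE$ I would realize the twist as a Fourier--Mukai transform $T_\cE\simeq\Phi_{P_\cE}$ with kernel $P_\cE=\Cone(\cE^\vee\boxtimes\cE\xrightarrow{\,\mathrm{ev}\,}\cO_\Delta)$ on $C\times C$, coming from the functorial triangle $\RHom(\cE,-)\otimes_k\cE\xrightarrow{\,\mathrm{ev}\,}\id\to T_\cE\xrightarrow{+1}$. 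Serre-compatibility of a Fourier--Mukai transform $\Phi_P$ on $\D^b(\coh(C))$ is equivalent to the Grothendieck dual kernel $P^\vee[1]$ --- which represents both adjoints of $\Phi_P$ --- being identified with $\sigma^*P$ ($\sigma$ the swap of the two factors) by the canonical isomorphism, under the fixed trivialization of $\omega_C$. So the task becomes to show that $P_\cE^\vee[1]$ and $\sigma^*P_\cE$ are canonically isomorphic; one computes both sides by rotating the defining triangle and using $\sigma^*\cO_\Delta\simeq\cO_\Delta$, $(\cE^\vee\boxtimes\cE)^\vee\simeq\cE\boxtimes\cE^\vee$, and $\cO_\Delta^\vee\simeq\cO_\Delta[-1]$ (the last one via the trivialization of $\omega_C$).

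The main obstacle is the last clause --- controlling scalar ambiguities and identifying the isomorphism one produces as \emph{the canonical} one. A priori $P_\cE^\vee[1]\simeq\sigma^*P_\cE$, and hence the datum it induces on $\Phi_{P_\cE}=T_\cE$, is determined only up to a scalar $c\in\CC^\times$, and a wrong $c$ is precisely the defect of Serre-compatibility (so that $c$ is a homomorphism on $\operatorname{Auteq}$ vanishing exactly on the Serre-compatible equivalences). To pin it down one uses that $\cE$ is \emph{spherical}, not merely exceptional or a $\PP^1$-object: this forces $\End(\cE)=\CC$, hence $\Hom(\cE^\vee\boxtimes\cE,\cE^\vee\boxtimes\cE)=\CC$ and $\Hom(\cE^\vee\boxtimes\cE,\cO_\Delta)\simeq\End(\cE)=\CC$, so the self-pairing on $\cE$ entering the dual of $\mathrm{ev}$ is canonical up to one scalar, and its compatibility with $\mathrm{ev}$ itself --- together with the (transparent) Serre-compatibility of $\RHom(\cE,-)\otimes_k\cE$ and of $\id$ --- forces that scalar to be $1$. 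With the evaluation triangle shown to be Serre-self-dual compatibly with $\mathrm{ev}$, Serre-compatibility of $T_\cE$ follows, and hence that of every $\Phi$ as in the statement.
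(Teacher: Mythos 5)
Your reduction to a single spherical twist is fine and matches the first step of the paper's argument: Serre\--compatible autoequivalences form a subgroup, the shift lies in it, so only the twist $T_\cE$ needs to be treated. The problem is the criterion you then use. The claim that Serre\--compatibility of $\Phi_P$ is equivalent to a canonical isomorphism $P^\vee[1]\simeq\sigma^*P$ is false, and it fails already on your own list of ``transparent'' cases: for $T_{\cO_x}\simeq(-)\otimes\cO_C(x)$ the kernel is $P=\iota_{\Delta*}\cO_C(x)$, so $P^\vee[1]\simeq\iota_{\Delta*}\cO_C(-x)$ (the kernel of the inverse, as it must be, since $P^\vee[1]$ represents the adjoints), while $\sigma^*P\simeq\iota_{\Delta*}\cO_C(x)$; these are not isomorphic. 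The same mismatch occurs for a general spherical $\cE$: rotating the dual of the evaluation triangle gives $P_\cE^\vee[1]\simeq\Cone(\cO_\Delta[-1]\to\cE\boxtimes\cE^\vee)$, which is the kernel of $T_\cE^{-1}$, not $\sigma^*P_\cE=\Cone(\cE\boxtimes\cE^\vee\to\cO_\Delta)$ --- consistent with $T_\cE\neq T_\cE^{-1}$ in general. So the object you propose to compute is not the one that detects Serre\--compatibility. The correct kernel\--level statement would compare the two trivialization\--induced identifications $P_L\simeq P^\vee[1]\simeq P_R$ with the canonical isomorphism between the left and right adjoints of an equivalence; but even after this repair, your final step --- that the residual scalar ``is forced to be $1$'' by $\End(\cE)=\CC$ and compatibility with $\mathrm{ev}$ --- is exactly the content of the lemma and is asserted rather than proved. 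As you yourself note, that scalar is precisely the defect of Serre\--compatibility, so no argument that only normalizes maps up to scalar can pin it down.

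For comparison, the paper avoids kernels entirely: the compatibility in question is equivalent to $\Phi$ acting trivially on $HH_1(C)=\Hom(\cS^{-1},\Id[-1])$, which for an elliptic curve is one\--dimensional and contains the class of the trivialization $\omega_C\simeq\cO_C$. A spherical twist acts on $HH_*(C)$ by the reflection $x\mapsto x-\lan\ch(E),x\ran\cdot\ch(E)$ with $\ch(E)\in HH_0(C)$, and since $HH_0(C)$ is orthogonal to $HH_1(C)$ for degree reasons, the reflection fixes $HH_1(C)$ pointwise. That orthogonality is the structural reason the scalar is $1$, and it is the ingredient your argument is missing.
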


\begin{proof} It is easy to see that this assertion is equivalent to the fact that $\Phi$ acts trivially on 
$HH_1(C)=\Hom(\cS^{-1},\Id[-1])$, where $\cS^{-1}$ is the inverse of the Serre functor.
It is enough to consider the case when $\Phi$ is a spherical twist with respect to a spherical object $E$.
Then the action of $\Phi$ on $HH_*(C)$ is given by the formula
$$x\mapsto x-\lan \ch(E),x\ran\cdot \ch(E),$$
where $\ch(E)$ is the Chern character of $E$ with values in $HH_*(C)$ and $\lan\cdot,\cdot\ran$ is the canonical
pairing (the proof is similar to \cite[Lem.\ 8.12]{Huy}).
Since $\ch(E)\in HH_0(C)$ and $HH_0(C)$ is orthogonal to $HH_1(C)$, we deduce that $\Phi$ acts trivially on
$HH_1(C)$.
\end{proof}

Let us say that a coherent sheaf on $C$ is {\it semistable} if it is either a semistable bundle or a torsion sheaf. 
By semistable objects in $\D^b(\coh(C))$ we mean objects of the form $F[n]$, where $F$ is a semistable coherent sheaf.
Such an object is called {\it stable} if its degree and rank are coprime (for bundles this is equivalent to the usual stability).

\begin{lemma}\label{semistable-bun-lem}
Let $A$ and $B$ be a pair of stable objects in $\D^b(\coh(C))$ of distinct slopes, such that $\Hom(A,B)\neq 0$.
Then for a generic morphism $f:A\to B$ the cone $\Cone(f)$ is semistable.
\end{lemma}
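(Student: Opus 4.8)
The plan is to reduce the lemma to the statement that a certain ``bad locus'' in $\PP\Hom(A,B)$ is a proper closed subset, and then to bound that locus by a Harder--Narasimhan dimension count.

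First I would fix the numerics. A nonzero morphism between stable objects of distinct slopes forces $\mu(A)<\mu(B)$ in the standard convention (equal slopes would give $A\simeq B$), so by Serre duality $\Ext^{1}(A,B)\cong\Hom(B,A)^{*}=0$ and $\dim\Hom(A,B)=\chi(A,B)=:N>0$; thus $\PP\Hom(A,B)\cong\PP^{N-1}$ is irreducible and $\Cone(f)$ has fixed class $v:=[B]-[A]$. On an elliptic curve every object of $\D^{b}(\coh(C))$ is a direct sum of shifts of semistable sheaves, so $\Cone(f)$ is semistable exactly when its cohomology is concentrated in one degree and is semistable there; since $f\mapsto\Cone(f)$ is a morphism to the (bounded) moduli stack of objects of class $v$, this is an open condition on $f$. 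Hence it is enough to show that the locus $Z\subset\PP^{N-1}$ of those $f$ with $\Cone(f)$ not semistable is proper; by irreducibility its complement is then dense. Up to a shift I may assume $A$ is a coherent sheaf, and since $C$ is a curve $B$ is then either a coherent sheaf or of the form $G[1]$ with $G$ a coherent sheaf. If $B\simeq G[1]$, then $\Cone(f)$ is, up to shift, the sheaf extension of $A$ by $G$, and the classical fact that a generic extension of semistable sheaves of non-decreasing slopes on a curve is semistable settles this case; so I assume $A$ and $B$ are coherent sheaves. Applying the duality $(-)^{\vee}$ if necessary — it exchanges $(A,B)$ with $(B^{\vee},A^{\vee})$, reverses slopes, and sends cones to cones up to a shift, compatibly with semistability — I may also assume $\rk A\le\rk B$.

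Next I would stratify $Z$. Since $\Cone(f)$ has nonzero $\mathcal H^{-1}=\ker(f)$ when $f$ is not injective, and when $f$ is injective $\Cone(f)=\coker(f)$ is torsion (if $\rk A=\rk B$, hence automatically semistable) or a sheaf of class $v$ (if $\rk A<\rk B$), the locus $Z$ is a finite union of locally closed subsets of two kinds: (i) for a class $\gamma$ with $0<\rk\gamma<\rk A$ and $\mu(A)<\mu(\gamma)<\mu(B)$, the set of $f$ with $\im(f)$ of class $\gamma$, each factoring canonically as $A\twoheadrightarrow Q\hookrightarrow B$ with $[Q]=\gamma$; and (ii) for a class $\rho$ with $\mu(B)<\mu(\rho)<\mu(v)$, the set of injective $f$ whose cokernel has a semistable quotient of class $\rho$ as its minimal Harder--Narasimhan quotient $R$ — then $B':=\ker(B\twoheadrightarrow R)$ is a saturated subsheaf of $B$ containing $\im(f)$, and $f$ factors through $B'$. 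For a stratum of type (i), the standard parameter count bounds its dimension, at a general $Q$, by $\dim\{\,Q\hookrightarrow B\,\}+\hom(A,Q)-\hom(Q,Q)-1$; for type (ii), by $\dim\{\,B'\hookrightarrow B\text{ with }B/B'\simeq R\,\}+\hom(A,B')-1$. Expanding each term by the Euler form of $C$ — using $\chi(\mathcal F,\mathcal F)=0$, the identification of the dimension of a family of subsheaves of $B$ with a Hom-space between sub and quotient, and the vanishing $\Ext^{1}(\mathcal F,\mathcal G)\cong\Hom(\mathcal G,\mathcal F)^{*}=0$ for $\mu(\mathcal F)<\mu(\mathcal G)$ — each of these bounds collapses to a number strictly less than $N-1=\dim\PP\Hom(A,B)$; the decisive point is that the defining inequality of the stratum ($\mu(\gamma)<\mu(B)$, resp.\ $\mu(\rho)<\mu(v)$) is precisely what makes the estimate strict. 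Since there are finitely many strata, $Z$ is proper, which proves the lemma.

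The main obstacle is carrying out this dimension count rigorously. The intermediate sheaves $Q$, $B'$, $R$ need not be semistable, so the clean estimates ``dimension of a family of subsheaves $=\chi$'' and ``$\hom=\chi$'' are only valid after $\Ext^{1}$-corrections, which must be shown to cancel in the end; the way to do this is to refine each stratification along the Harder--Narasimhan filtrations of the intermediate sheaves and to apply the vanishing $\Ext^{1}(\mathcal F,\mathcal G)\cong\Hom(\mathcal G,\mathcal F)^{*}=0$ for $\mu(\mathcal F)<\mu(\mathcal G)$ at every stage. This Harder--Narasimhan bookkeeping, together with the verification that the various arithmetic inequalities do follow from the stratum conditions, is exactly the part left implicit in \cite{Pol98}.
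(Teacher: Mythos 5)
Your overall skeleton (openness of semistability in the family $f\mapsto\Cone(f)$, plus irreducibility of $\PP\Hom(A,B)$ coming from $\Ext^1(A,B)\cong\Hom(B,A)^*=0$, so that it suffices to show the bad locus is a proper closed subset) shares its first step with the paper but then diverges completely. The paper does not estimate the bad locus at all: after the openness remark it only needs to exhibit a \emph{single} good $f$, and it does so by applying an autoequivalence of $\D^b(\coh(C))$ to reduce to $B=\cO_p$ (so that $A$ is forced to be a stable bundle), writing $A\simeq\pi_*L$ for an isogeny $\pi:C'\to C$ and a line bundle $L$ (Proposition \ref{lem_pushfor}), and taking $f$ to be the push-forward of $L\to L|_q$ for $q\in\pi^{-1}(p)$; the kernel $\pi_*(L(-q))$ is semistable by the same proposition. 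This sidesteps every case distinction and every dimension count in your plan, at the cost of invoking the $\SL_2(\ZZ)$-action of autoequivalences. Your route has the virtue of quantifying how small the bad locus is, but it proves more than is needed.

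The genuine weak point of your write-up is that the decisive estimate is asserted rather than carried out, as you yourself concede at the end. It can be closed: e.g.\ for your type (i) strata with semistable image $Q$ of class $q$, bilinearity of the Euler form gives $\chi(A,Q)+\chi(Q,B)-\chi(A,B)=-\lan[\ker f],[\coker f]\ran$, and the right-hand side is $\le -1$ because both classes have positive rank and $\mu(\ker f)<\mu(A)<\mu(B)<\mu(B/Q)$; the HN refinement for non-semistable $Q$, $B'$, $R$ and the finiteness of the set of classes $\gamma,\rho$ (bounded by the slope inequalities and $1\le\rk\gamma\le\rk A-1$) then need to be written out, and the reduction steps (shifting so $A$ is a sheaf, dualizing so $\rk A\le\rk B$, and the $B\simeq G[1]$ case via generic extensions) each require you to fix the slope conventions for shifted and torsion objects, which you currently gloss over. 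As it stands the argument is a credible program, not a proof; either complete the bookkeeping or adopt the paper's reduction to $B=\cO_p$, which renders the lemma essentially computation-free.
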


\begin{proof} The set of $f\in\Hom(A,B)$ such that $\Cone(f)$ is semistable is open, so it is enough to find one such $f$.
Applying an autoequivalence of $\D^b(\coh(C))$, we can assume that $B=\cO_p$ for some point $p\in C$.
Since $\Hom(A,\cO_p)\neq 0$, this implies that $A$ is a stable vector bundle. Thus, we can find an isogeny of elliptic curves
$f:C'\to C$ and a line bundle $L$ on $C'$ such that $A\simeq f_*L$. Let $q\in C'$ be a point such that $f(q)=p$.
Then the push-forward of the exact sequence
$$0\to L(-q)\to L\to L|_q\to 0$$
gives an exact sequence
$$0\to f_*(L(-q))\to A\to B\to 0$$
It remains to use the fact that $f_*(L(-q))$ is semistable (see Proposition \ref{lem_pushfor}).
\end{proof}

\begin{proof}[Proof of Theorem \ref{FM-thm}]
It suffices to check that our isomorphism is compatible with the Poisson structures
over some dense open subsets. So we will only consider the open subset 
$$U\sub \PP \Hom(E_0,E_1)$$ 
corresponding to injective $\phi:E_0\to E_1$, such that $\coker(\phi)$
is a semistable sheaf (in the case $\rk(E_0)=\rk(E_1)$ the cokernel is a torsion sheaf, so the semistability is automatic). 
Lemma \ref{semistable-bun-lem} easily implies that the set $U$ is nonempty. Indeed, we have
$$\Cone(\phi)\simeq \ker(\phi)[1]\oplus\coker(\phi),$$ 
so the semistability of $\Cone(\phi)$ implies that $\phi$ is either injective or surjective. 
However, since $\rk(E_0)\le \rk(E_1)$, if $\phi$ is surjective, it would be an isomorphism,
which contradicts the assumption $\mu(E_0)<\mu(E_1)$.

Let us also consider the open subset 
$$V\subset \PP \Ext^1(F_0,F_1)$$ 
consisting of extension classes such that the corresponding extended bundle $F$ is 
semistable. We claim that the map given by $\Phi$ restricts to an isomorphism $U\to V$.
Indeed, in the case when $\phi$ is injective, applying $\Phi$ to the exact triangle
$$E_0\rTo{\phi} E_1\to \coker(\phi)\to E_0[1]$$
we should get the exact triangle
$$F_0\to F_1[1]\to F[1]\to F_0[1],$$
so $F[1]\simeq \Phi(\coker(\phi))$, which shows that $F$ is semistable when $\coker(\phi)$ is.
Conversely, assume $F$ is semistable. Then $\Phi^{-1}(F[1])$ is semistable, so $\phi$ is injective and $\coker(\phi)$ is semistable.

Now let us fix an element $\lan\phi_0\ran$ in $U$, let $E=\coker(\phi_0)$, so that we have an exact sequence
\begin{equation}\label{E-T-ex-seq}
0\to E_0\rTo{\phi_0} E_1\to E\to 0
\end{equation}
and let $F=\Phi(E)[-1]$ be the corresponding
semistable extension of $F_0$ by $F_1$.
Let us consider two auxiliary spaces depending on $\phi_0$. First, let $U'$ be the open subset in $\PP \Hom(E_1,E)$
consisting of surjective $\psi:E_1\to E$ such that $\ker(\psi)$ is stable (and hence,
isomorphic to $E_0$). The other space $V'$ is defined as the open subset in $\PP \Hom(F_1,F)$ corresponding
to $f:F_1\to F$ such that $\coker(f)$ is a stable bundle (and hence, isomorphic to $F_0$).
Note that we have a commutative diagram, where the vertical arrows are induced by $\Phi$:
\begin{diagram}
U'&\rTo{\rho}&U\\
\dTo{\Phi}&&\dTo{\Phi}\\
V'&\rTo{\tau}&V
\end{diagram}
Here the map $\rho:U'\to U$ associates to $\psi:E_1\to E$ the corresponding morphism $E_0\simeq\ker(\psi)\to E_1$,
and the map $\tau:V'\to V$ associates to $f:F_1\to F$ the class of the extension of $\coker(f)\simeq F_0$ by $F_1$.
Let $\psi_0:E_1\to E$ be the canonical projection, and let $f_0:F_1\to F$ be the image of $\psi_0$ under $\Phi[-1]$.
Then the above diagram is compatible with distinguished points in all spaces: $\lan\psi_0\ran\in U'$, $\lan f_0\ran\in V'$,
$\lan\phi_0\ran\in U$, and the class $e_0$ of the extension $F$ in $V$.

Now we claim that there are natural isomorphisms
$$T^*_{\phi_0}U\rTo{\alpha} T_{\psi_0}U', \ \ T^*_{e_0}V\rTo{\beta}T_{f_0}V',$$
fitting into the commutative diagram 
\begin{equation}\label{FM-squares-diagram}
\begin{diagram}
T^*_{\phi_0}U&\rTo{\alpha}&T_{\psi_0}U'&\rTo{d\rho}&T_{\phi_0}U\\
\uTo{d\Phi^*}&&\dTo{d\Phi}&&\dTo{d\Phi}\\
T^*_{e_0}V&\rTo{\beta}&T_{f_0}V'&\rTo{d\tau}&T_{e_0}V
\end{diagram}
\end{equation}
in which the compositions given by the two rows are the Poisson tensors $\Pi^U$ and $\Pi^V$,
computed at $\phi_0$ and $e_0$,
respectively.

To define $\alpha$, we note that by Serre duality, we have an identification
$$T^*_{\phi_0}U\simeq \ker(\Ext^1(E_1,E_0)\to \Ext^1(E_1,E_1)).$$
Now the long exact sequence of the functors $\Ext^*(E_1,?)$ applied to the exact sequence \eqref{E-T-ex-seq}
gives an isomorphism of the latter kernel with 
$$\coker(\Hom(E_1,E_1)\to \Hom(E_1,E))\simeq T_{\psi_0}U'.$$
Similarly, to define $\beta$, we use Serre duality to get an identification
$$T^*_{e_0}V\simeq \ker(\Hom(F_1,F_0)\to \Ext^1(F_1,F_1)),$$
and use the isomorphism of the latter kernel with  
$$\coker(\Hom(F_1,F_1)\to \Hom(F_1,F))\simeq T_{f_0}V'.$$

Note that the commutativity of the left square in diagram \eqref{FM-squares-diagram} follows from the fact
that $\Phi$ is compatible with Serre duality (see Lemma \ref{FM-lem}).

On the other hand, the Poisson tensor $\Pi^U$ is induced by the map
$$\Ext^1(E_1,E_0)\simeq H^1(\sHom(E_1,E_0))\to \HH^1(\sEnd(E_1)\oplus \sEnd(E_0)\to \sHom(E_0,E_1)),$$
coming from the chain map
\begin{diagram}
\sHom(E_1,E_0)\\
\dTo{(\phi_0\circ , \circ\phi_0)}\\
\sEnd(E_1)\oplus \sEnd(E_0)&\rTo{\partial}& \sHom(E_0,E_1)
\end{diagram}
Now our claim that $\Pi^U\circ\alpha^{-1}=d\rho$ follows from Lemma \ref{tangent-map-comp-lem} below,
together with commutativity of the diagram
\begin{diagram}
T^*_{\phi_0}U&\rTo{\alpha}& T_{\psi_0}U'\\
\dTo{}&&\dTo{}\\
\Ext^1(E_1,E_0)&\rTo{\sim}&\HH^1(\sEnd(E_1)\to \sHom(E_1,E))
\end{diagram}
where the bottom arrow is induced by the quasiisomorphism
$\sHom(E_1,E_0)\to [\sEnd(E_1)\to \sHom(E_1,E)]$, and the right vertical arrow is induced by the embedding of $U'$
into the space of triples $E_1\to E$ with fixed $E$.

Next, we need to check that $\Pi^V=d\tau\circ\beta$. Recall (see the discussion after Lemma 3.1 in \cite{Pol98})
that the tangent space to the moduli stack $\cM'$ of triples
$(\wt{F}_1\rTo{f} \wt{F})$, where $f$ is an embedding of a subbundle, at a given triple $(F_1\rTo{f_0}F)$, can be identified with
$H^1(\sEnd(F,F_1))$, where $\sEnd(F,F_1)$ is the bundle of endomorphisms of $F$ preserving the subbundle $F_1$.
Furthermore, the Poisson tensor $\Pi^V$ is given by the composition
$$H^1(\sEnd(F,F_1))^*\rTo{\sim} H^0(\sEnd(F,F_1)^\vee)\rTo{\gamma} H^0(\sHom(F_1,F_0))\rTo{\delta} H^1(\sEnd(F,F_1)),$$
where the first isomorphism is given by the Serre duality; $\gamma$ comes the natural morphism
$\sEnd(F,F_1)^\vee\to\sHom(F_1,F_0)$ (which is dual to the embedding $\sHom(F_0,F_1)\to\sEnd(F,F_1)$);
and $\delta$ is the boundary homomorphism coming from the exact sequence
$$0\to \sEnd(F,F_1)\to \sEnd(F)\to \sHom(F_1,F_0)\to 0$$
Note that the tangent map to the natural embedding 
$$\PP \Ext^1(F_0,F_1)\rTo{i} \cM'$$
is the map 
$$di: H^1(\sHom(F_0,F_1))/\lan e_0\ran\to H^1(\sEnd(F,F_1))$$
induced by the embedding of bundles $\sHom(F_0,F_1)\to \sEnd(F,F_1)$.
By Serre duality, the dual projection is the map
$$di^*:H^0(\sEnd(F,F_1)^\vee)\to \ker(H^0(\sHom(F_1,F_0))\to\Ext^1(F_1,F_1))\simeq T^*_{e_0}V$$
Thus, it is enough to check the commutativity of the following diagram
\begin{diagram}
H^0(\sEnd(F,F_1)^\vee)&\rTo{\gamma}&\Hom(F_1,F_0)&\rTo{\delta}&H^1(\sEnd(F,F_1))\\
\dTo{di^*}&&\uTo{}&&\uTo{di}\\
T^*_{e_0}V&\rTo{\beta}&T_{f_0}V'&\rTo{d\tau}&T_{e_0}V
\end{diagram}
where the middle vertical arrow is induced by the projection $F\to F_0$.
The commutativity of the left square follows easily from the definitions of the relevant maps.
To prove the commutativity of the right square we observe that $di\circ d\tau$ is just the tangent map
to the natural embedding $V'\hookrightarrow \cM'$. Thus, it remains to check that the tangent map to this embedding
is given by the composition
$$\Hom(F_1,F)/\Hom(F_1,F_1)\to \Hom(F_1,F_0)\rTo{\delta} H^1(\sEnd(F,F_1)).$$
Indeed, under the identification of the tangent space to $\cM'$ with 
$\HH^1(\sEnd(F)\oplus\sEnd(F_1)\rTo{\partial}\sHom(F_1,F))$ this tangent map is induced by the natural map
$$H^0(\sHom(F_1,F))\to \HH^1(\sEnd(F)\oplus\sEnd(F_1)\to\sHom(F_1,F)).$$
Similarly, $\delta$ is induced by the natural map
$$H^0(\sHom(F_1,F_0))\to \HH^1(\sEnd(F)\to \sHom(F_1,F_0))\simeq H^1(\sEnd(F,F_1)).$$
Now our statement follows from the commutativity of the square
\begin{diagram}
\sEnd(F)\oplus\sEnd(F_1)&\rTo{\partial}&\sHom(F_1,F)\\
\dTo{}&&\dTo{}\\
\sEnd(F)&\rTo{}&\sHom(F_1,F_0)
\end{diagram}
\end{proof}

\begin{lemma}\label{tangent-map-comp-lem} 
Assume that we have an exact sequence of coherent sheaves on a scheme $X$,
$$0\to A_0\rTo{a_0} B_0\rTo{b_0} C_0\to 0,$$
where $A_0$ and $B_0$ are vector bundles.
Let $\cP$ denote the moduli stack of pairs $(B, b:B\to C_0)$ where $B$ is a vector bundle and $b$ is surjective (and $C_0$ is
fixed),
and let $\Trp$ be the moduli stack of triples $(A,B,a:A\to B)$, where $A$ and $B$ are vector bundles.
Let $\rho:\cP\to\Trp$ be the natural morphism sending $(B,b)$ to $(A=\ker(b),B, a:\ker(b)\to B)$, where
$a$ is the natural embedding. Then the tangent map to $\rho$ at the point $(B_0,b_0)$ is the map
on $\HH^1$ induced by the chain map of complexes
\begin{diagram}
\sHom(B_0,A_0)\\
\dTo{\circ a_0,a_0\circ}\\
\sEnd(A_0)\oplus \sEnd(B_0)&\rTo{\partial}&\sHom(A_0,B_0)
\end{diagram}
where we use the quasiisomorphism 
\begin{equation}\label{quis-Hom-BA-eq}
\sHom(B_0,A_0)\to [\sEnd(B_0)\to \sHom(B_0,C_0)] 
\end{equation}
to identify the tangent space to $\cP$ with $H^1(\sHom(B_0,A_0))$.
\end{lemma}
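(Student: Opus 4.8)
The plan is to compute the tangent map $d\rho$ directly in terms of \v{C}ech hypercocycles. First I would record the two tangent spaces. Since $\cP$ fibers over the moduli stack of vector bundles with fiber the affine space $\Hom(B,C_0)$, the tangent space to $\cP$ at $(B_0,b_0)$ is $\HH^1(K)$, where $K:=[\sEnd(B_0)\to\sHom(B_0,C_0)]$ lies in degrees $0$ and $1$ with differential $f\mapsto b_0 f$; likewise the tangent space to $\Trp$ at $(A_0,B_0,a_0)$ is $\HH^1(L)$, where $L:=[\sEnd(A_0)\oplus\sEnd(B_0)\xrightarrow{\partial}\sHom(A_0,B_0)]$ is the tangent complex of the stack of triples, with $\partial(\alpha,\beta)=a_0\alpha-\beta a_0$ (cf.\ the description of $\TT_\phi$ recalled in Section~\ref{sec:loopalg} and \cite{HP17}). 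Because $B_0$ is locally free, $\sHom(B_0,-)$ is exact, so applied to $0\to A_0\xrightarrow{a_0}B_0\xrightarrow{b_0}C_0\to0$ it produces a short exact sequence of sheaves; this is the quasi-isomorphism \eqref{quis-Hom-BA-eq}, with $\sHom(B_0,A_0)$ mapping in degree $0$ by $\psi\mapsto a_0\psi$, and it is this quasi-isomorphism I would use to identify the tangent space to $\cP$ with $H^1(\sHom(B_0,A_0))$.

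Next I would fix an affine open cover $\{U_i\}$ of $X$ and start from a \v{C}ech hypercocycle $((\phi_{ij}),(\beta_i))$ for $K$: $\phi_{ij}\in\sEnd(B_0)(U_{ij})$ with $\check\delta\phi=0$, and $\beta_i\in\sHom(B_0,C_0)(U_i)$ with $\beta_j-\beta_i=b_0\phi_{ij}$. Over the affine $U_i$ the surjection of sheaves $\sEnd(B_0)\to\sHom(B_0,C_0)$ stays surjective on sections, so I may choose lifts $\tilde\beta_i\in\sEnd(B_0)(U_i)$ with $b_0\tilde\beta_i=\beta_i$; then $\phi_{ij}-\tilde\beta_j+\tilde\beta_i$ lies in the kernel of $f\mapsto b_0 f$, i.e.\ in $a_0\sHom(B_0,A_0)$, so $a_0\theta_{ij}=\phi_{ij}-\tilde\beta_j+\tilde\beta_i$ for a unique $\theta_{ij}\in\sHom(B_0,A_0)(U_{ij})$, and $(\theta_{ij})$ is the $1$-cocycle representing the same tangent vector under \eqref{quis-Hom-BA-eq}. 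Our hypercocycle corresponds to the first-order deformation over $\CC[\ep]/(\ep^2)$ consisting of the bundle $\mathcal B$ with $\mathcal B|_{U_i}=B_0|_{U_i}\otimes\CC[\ep]$ and transitions $1+\ep\phi_{ij}$, together with $\mathfrak b|_{U_i}=b_0+\ep\beta_i$; its image under $\rho$ is $\mathcal A:=\Ker(\mathfrak b)\hookrightarrow\mathcal B$. Here I would check that $\rho$ is well-defined: since $\mathfrak b$ surjects onto the $\CC[\ep]$-flat sheaf $C_0\otimes\CC[\ep]$, a short diagram chase shows $\mathcal A$ is flat over $\CC[\ep]$ with $\mathcal A\otimes_{\CC[\ep]}\CC\cong A_0$, hence — being flat with locally free fiber — locally free.

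The heart of the proof is then a short local computation. Writing a section of $\Ker(b_0+\ep\beta_i)$ over $U_i$ in the form $a_0 u+\ep(a_0 u'-\tilde\beta_i a_0 u)$ with $u,u'\in A_0|_{U_i}$ identifies $\mathcal A|_{U_i}$ with $A_0|_{U_i}\otimes\CC[\ep]$ in such a way that the inclusion $\iota_i:\mathcal A|_{U_i}\to\mathcal B|_{U_i}$ becomes $a_0-\ep\tilde\beta_i a_0$, and matching $\ep$-coefficients in the gluing relation $\iota_i g^{\mathcal A}_{ij}=g^{\mathcal B}_{ij}\iota_j$ gives (using that $a_0$ is a monomorphism) the transition functions $1+\ep\,\theta_{ij}a_0$ for $\mathcal A$. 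Consequently $d\rho$ sends the class of $((\phi_{ij}),(\beta_i))$ to the class in $\HH^1(L)$ of $\big((\theta_{ij}a_0,\ \phi_{ij}),\ (-\tilde\beta_i a_0)\big)$. On the other hand the chain map of the statement, $\psi\mapsto(\psi a_0,\ a_0\psi)$, sends $(\theta_{ij})$ to $\big((\theta_{ij}a_0,\ a_0\theta_{ij}),\ 0\big)$. Their difference is $\big((0,\ \phi_{ij}-a_0\theta_{ij}),\ (-\tilde\beta_i a_0)\big)=\big((0,\ \tilde\beta_j-\tilde\beta_i),\ (-\tilde\beta_i a_0)\big)$, which is precisely the total \v{C}ech coboundary of the $0$-cochain $(0,\tilde\beta_i)\in C^0(\{U_i\},\sEnd(A_0)\oplus\sEnd(B_0))$, since $\partial(0,\tilde\beta_i)=-\tilde\beta_i a_0$. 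Hence the two classes in $\HH^1(L)$ coincide, which is the assertion.

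The main obstacle will be the bookkeeping in this middle step — tracking the effect of the non-canonical local lifts $\tilde\beta_i$ (whose indeterminacy is exactly what the concluding coboundary absorbs), and keeping the sign conventions consistent throughout (the direction of the transition functions of $\mathcal A$, the sign in $\partial$, and the \v{C}ech differentials) so that the comparison comes out on the nose. The only other point that needs an argument rather than a calculation is the flatness and local-freeness of $\Ker(\mathfrak b)$, which is what makes $\rho$, and hence its tangent map, meaningful in the first place.
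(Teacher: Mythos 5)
Your proof is correct, but it proceeds by a genuinely different route than the paper's. The paper introduces an auxiliary moduli stack $\cM$ of exact complexes $0\to A\to B\to C_0\to 0$ (with $C_0$ fixed), observes that the projection $p_{BC_0}:\cM\to\cP$ is an isomorphism while $p_{AB}:\cM\to\Trp$ is the map $\rho$ under this identification, and then reads off $d\rho$ purely homologically: the tangent complex of $\cM$ is the three-term complex $[\sEnd(A_0)\oplus\sEnd(B_0)\to\sHom(A_0,B_0)\oplus\sHom(B_0,C_0)\to\sHom(A_0,C_0)]$, both tangent maps are induced by the evident projections of complexes, and the quasi-isomorphism $\sHom(B_0,A_0)\to[\sEnd(B_0)\to\sHom(B_0,C_0)]$ lifts to a quasi-isomorphism into this total complex, from which the stated chain map falls out by composing with the other projection. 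That argument needs no cocycles and no choices of local lifts. You instead compute $d\rho$ on the nose with \v{C}ech hypercocycles over $\CC[\ep]/(\ep^2)$: you deform $(B_0,b_0)$ explicitly, identify the deformed kernel locally via the lifts $\tilde\beta_i$, extract the transition functions $1+\ep\,\theta_{ij}a_0$ of the deformed subsheaf and the cochain $-\tilde\beta_i a_0$ for the deformed inclusion, and check that the discrepancy with the image of $(\theta_{ij})$ under the stated chain map is exactly the coboundary of $(0,\tilde\beta_i)$. Your route buys an explicit, geometrically transparent verification (one literally sees how $\ker(\mathfrak b)$ moves, and you rightly supply the flatness argument that makes $\rho$ well defined on dual numbers, a point the paper leaves implicit), at the cost of the sign and lift bookkeeping you flag; the paper's route buys brevity and sign-robustness by making every map in sight a morphism of complexes induced by a projection. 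Both establish the lemma.
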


\begin{proof}
Let $\cM$ be the moduli stack of exact complexes of the form $0\to A\to B\to C_0\to 0$ where $A$ and $B$ are vector bundles
and $C_0$ is fixed. Then we have obvious projections $p_{AB}:\cM\to \Trp$ and $p_{BC_0}:\cM\to\cP$, such that
$p_{BC_0}$ is an isomorphism and $\rho\circ p_{BC_0}=p_{AB}$. The tangent space to $\cM$ at 
$(A_0\rTo{a_0}B_0\rTo{b_0}C_0)$ is given by 
$\HH^1(\cE^\bullet)$ with
$$\cE^\bullet=[\sEnd(A_0)\oplus \sEnd(B_0)\to\sHom(A_0,B_0)\oplus \sHom(B_0,C_0)\to\sHom(A_0,C_0)],$$
so that the tangent maps $dp_{AB}$ and $dp_{BC_0}$ are induced by the natural projections of complexes.
Now we observe that there is a natural quasiisomorphism $\sHom(B_0,A_0)\to \cE^\bullet$ lifting the quasiisomorphism
\eqref{quis-Hom-BA-eq}. Thus, the required tangent map is induced by the composition
$$\sHom(B_0,A_0)\to \cE^\bullet\to [\sEnd(A_0)\oplus \sEnd(B_0)\to \sHom(A_0,B_0)].$$
\end{proof}

\begin{remark} One can construct the autoequivalence $\Phi$ from the proof of Corollary \ref{FM-cor} explicitly.
First, let us consider the special case when $d=1$. Then we have $n=1$ and $m=0$.
We adopt the notation of Feigin and Odesskii to denote  a stable vector bundle of rank $r$, degree $d$ and determinant $\alpha$ by $\xi_{d,r}(\alpha)$.
The standard Fourier-Mukai transform $F$ on $\D^b(\coh(C))$ (which is a composition of spherical twists) satisfies 
\begin{enumerate}
\item[$\bullet$] $F(\xi_{d,r}(\alpha))=\xi_{-r,d}(\alpha^{-1})$ if $d>0$ and $F(\xi_{d,r}(\alpha))=\xi_{r,-d}(\alpha^{-1})[-1]$ if $d<0$.
\end{enumerate}
Applying this to $E$, we get that $F(E)=L_r$ is a line bundle of degree $-r$. Define $\Phi$ to be the composition of functors
\[
[1]\circ(-\otimes L_r^{-1})\circ F\circ (-\otimes \cO(-D)).
\]
Clearly, $\Phi(E(D))=\cO_C[1]$. Since $E(-D)$ has rank $r$ and degree $1-rk$, $F(E(-D))$ is a stable bundle of rank $rk-1$ and degree $r$ shifted by $[-1]$. So $\Phi(E)$ is a stable vector bundle of rank $rk-1$ and degree $r^2k$, as required.

In the general case we decompose $d/r$ into a continuous fraction 
\[
\frac{d}{r}=\frac{1}{r_1-\frac{1}{r_2-\ldots\frac{1}{r_p}}}, ~r_i\geq 2,~ 1\leq i\leq p.
\]
There exists line bundles $L_{r_i}$ of degree $-r_i$, for $i=1,\ldots,p$, so that 
 the autoequivalence $\Phi_{r,d}$ defined by
\[
\Phi_{r,d}:= [1]\circ(\otimes L^{-1}_{r_p})\circ F\circ\ldots (\otimes L^{-1}_{r_2})\circ F\circ(\otimes L^{-1}_{r_1})\circ(\otimes\cO(-D)).
\]
satisfies $\Phi_{r,d}(E(D))=\cO_C[1]$. It is easy to check that $\Phi_{r,d}(E)$ is a vector bundle with
\[
\rk(\Phi_{r,d}(E))=r^2k,~~~\deg(\Phi_{r,d}(E))=rkn-1.
\]
\end{remark}

\begin{remark}
It is expected that $q_{r^2k,rk-1}(\xi)$ is quantized by the Feigin-Odesskii elliptic algebra $Q_{r^2k,rk-1}(\eta)$ with $\eta\in C$ (see definition in \cite{FO87}). This is stated in \cite{FO95} without proof. A proof for a special case is given in Section 5 of \cite{HP17} when $\xi$ is assumed to be a line bundle. An important application of Corollary \ref{FM-cor} is that it can be used to construct a comultiplication morphism between different families of Feigin-Odesskii elliptic algebras. The existence of such a comultiplication morphism was first predicted by Feigin and Odesskii in their seminal paper \cite{FO87}. However, Feigin and Odesskii did not give the formula of the comultiplication. Elsewhere we plan to provide an explicit formula for the comultiplication map 
\[
Q_{r^2(k_1+k_2),r(k_1+k_2)-1}(\eta)\to Q_{r^2k_1,rk_1-1}(\eta_1)\otimes Q_{r^2k_2,rk_2-1}(\eta_2).
\]
\end{remark}


\end{document}